\newtheorem{thm}{Theorem}
\newtheorem{lemma}[thm]{Lemma}
\newtheorem{prop}[thm]{Proposition}
\newtheorem{cor}[thm]{Corollary}
\theoremstyle{definition}
\newtheorem{rem}[thm]{Remark}
\newtheorem{condition}[thm]{Condition}
\numberwithin{definition}{section}
\numberwithin{proc}{section}
\numberwithin{equation}{section}
\numberwithin{condition}{section}
\numberwithin{condition}{section}
\numberwithin{prop}{section}
\numberwithin{thm}{section}
\numberwithin{lemma}{section}
\numberwithin{rem}{section}
\numberwithin{cla}{section}
\numberwithin{obs}{section}
\numberwithin{cor}{section}
\definecolor{webgreen}{rgb}{0,.5,0}
\definecolor{Maroon}{HTML}{800000}
\newcommand*{\doi}[1]{doi:\,\texttt{\href{http://dx.doi.org/#1}{#1}}}
\newcommand{\bE}{\mathbb{E}}
\newcommand\bfe{{\mathbf e}}
\newcommand\bfh{{\mathbf h}}
\newcommand\bfw{{\mathbf w}}
\newcommand\vbfd{{\vec{\mathbf{d}}}}
\newcommand\dsG{{\mathbb G}}
\newcommand\dsN{{\mathbb N}}
\newcommand\dsR{{\mathbb R}}
\newcommand\dsZ{{\mathbb Z}}
\newcommand{\ind}[1]{\mathbf{1}_{#1}}
\newcommand{\N}{{\mathbb{N}}}
\newcommand{\da}{\downarrow}
\newcommand{\ua}{\uparrow}
\newcommand\cA{{\cal{A}}}
\newcommand\cB{{\cal{B}}}
\newcommand\cC{{\cal{C}}}
\newcommand\cE{{\cal{E}}}
\newcommand\cF{{\cal{F}}}
\newcommand\cG{{\cal{G}}}
\newcommand\cL{{\cal{L}}}
\newcommand\cN{{\cal{N}}}
\newcommand\cP{{\cal{P}}}
\newcommand\cR{{\cal{R}}}
\newcommand\cT{{\cal{T}}}
\newcommand\cU{{\cal{U}}}
\newcommand\cV{{\cal{V}}}
\newcommand\cW{{\cal{W}}}
\newcommand\cX{{\cal{X}}}
\newcommand\cY{{\cal{Y}}}
\newcommand\cZ{{\cal{Z}}}
\newcommand{\E}[1]{{\mathbb{E}}\left[#1\right]}
\newcommand{\ee}[1]{{\mathbb{E}}[#1]}
\newcommand{\e}{{\mathbb{E}}}
\newcommand{\V}[1]{{\mathrm{Var}}\left(#1\right)}
\newcommand{\Vv}[1]{{\mathrm{Var}}(#1)}
\newcommand{\p}[1]{{\mathbb{P}}\left(#1\right)}
\newcommand{\pt}[1]{{\mathbb{P}}_t\left(#1\right)}
\newcommand{\eql}{\,{\buildrel \cL \over =}\,}
\newcommand{\Be}{\mathop{\mathrm{Be}}}
\DeclarePairedDelimiter{\floor}{\lfloor}{\rfloor}
\DeclarePairedDelimiter{\ceil}{\lceil}{\rceil}
\newcommand{\tin}{\mathrm{\text{i-sb}}}
\newcommand{\tout}{\mathrm{\text{o-sb}}}
\newcommand{\hnu}{{\hat \nu}}
\newcommand{\nupm}{{\nu^{\pm}}}
\newcommand{\xihat}{{\hat \xi}}
\newcommand{\xistar}{{\xi^{*}}}
\newcommand{\xitilde}{{\tilde \xi}}
\newcommand{\new}[1]{#1}
\DeclareMathOperator{\dist}{dist}
\DeclareMathOperator{\diam}{diam}
\newcommand\din{D_{{\tin}}}
\newcommand\dout{D_{{\tout}}}
\newcommand\doutp{\dout^{+}}
\newcommand\doutm{\dout^{-}}
\newcommand\dtilde{\tilde{D}_{{\tout}}}
\newcommand\dtildep{\tilde{D}_{{\tout}}^{+}}
\newcommand\dtildem{\tilde{D}_{{\tout}}^{-}}
\newcommand\vecGn{\vec{\dsG}_{n}}
\newcommand\vecGns{\vec{\dsG}_{n}^{\mathrm{s}}}
\newcommand\GW{\mathrm{BGW}}
\newcommand{\procall}{({X}_r)_{r \ge 0}}
\newcommand{\procstar}{({X}_r^{*})_{r \ge 0}}
\newcommand{\procspine}{(\hat{X}_j)_{j \ge 0}}
\newcommand{\procstart}{({X}_t^{*})_{t \ge 0}}
\newcommand{\proctildet}{(\tilde{X}_t)_{t \ge 0}}
\newcommand{\procspinet}{(\hat{X}_t)_{t \ge 0}}
\newcommand{\trunc}[1]{X_{r}^{(#1)}}
\newcommand{\proctrunc}[1]{(\trunc{#1})_{t \ge r \ge 0}}
\newcommand{\walk}{({Z}_t)_{t \ge 0}}
\newcommand{\walke}{({Z}_t^{\bfe})_{t \ge 0}}
\newcommand\pimin{\pi_{{\min}}}
\newcommand\pimax{\pi_{{\max}}}
\newcommand\pimine{\pi_{{\min}}^{\bfe}}
\newcommand\piminez{\pi_{0}^{\bfe}}
\newcommand\tent{\tau_{\mathrm{\,ent}}}
\newcommand\tcov{\tau_{\mathrm{\,cov}}}
\newcommand\thit{\tau_{\mathrm{\,hit}}}
\newcommand\TX{{\text{TX}}}
\newcommand\hatw{\hat{\bfw}}
\title{Minimum stationary values of sparse random directed graphs}
\author[*]{Xing Shi Cai}
\author[**]{Guillem Perarnau}
\affil[*]{\small\it Mathematics Department, Uppsala University, Sweden. Email:~{\tt xingshi.cai@tutanota.com}.}
\affil[**]{\small\it Departament de Matem\`atiques (MAT), Universitat Polit\`ecnica de Catalunya (UPC), Barcelona, Spain. Email:~{\tt guillem.perarnau@upc.edu}.}
\begin{document}
\maketitle

\begin{abstract}
    We consider the stationary distribution of the simple random walk on the directed configuration
    model with bounded degrees. Provided that the minimum out-degree is at least \(2\), with high
    probability (whp) there is a unique stationary distribution (\new{uniqueness} regime).  We show that
    the minimum positive stationary value is whp \(n^{-(1+C+o(1))}\) for some constant \(C \ge 0\)
    determined by the degree distribution, answering a question raised by Bordenave, Caputo and
    Salez~\cite{bordenave2018}. In particular, $C$ is the competing combination of two factors: (1)
    the contribution of atypically ``thin'' in-neighbourhoods, controlled by subcritical branching
    processes; and (2) the contribution of atypically ``light'' trajectories, controlled by large
    deviation rate functions. Additionally, we give estimates for the expected lower tail of
    the empirical stationary distribution. As a by-product of our proof, we obtain that the
    hitting and the cover time are both \(n^{1+C+o(1)}\) whp.  Our results are in sharp contrast to
    those of Caputo and Quattropani~\cite{caputo2020a} who showed that under the additional
    condition of minimum in-degree at least 2 (ergodicity regime), stationary values only have
    logarithmic fluctuations around \(n^{-1}\).
\end{abstract}

\section{Introduction}\label{sec:introduction}

\subsection{The directed configuration model}

The directed configuration model was introduced by Cooper and Frieze in~\cite{cooper2004}.  Let
\([n]\coloneqq \{1,\dots,n\}\) be a set of \(n\) vertices.  Let \(\vbfd_n=((d^{-}_1,d_1^{+}),\dots,
(d^{-}_n,d^{+}_n))\) be a bi-degree sequence with \(m\coloneqq \sum_{v\in [n]} d^{+}_{v} = \sum_{v \in
[n]}d^{-}_{v}\). 
Let $\delta^-$ and $\delta^+$ be the minimum in- and out-degree respectively. Let $\Delta^-$ and $\Delta^+$ be the maximum in- and out-degree respectively. The directed configuration model, which we denote by \(\vecGn=\vecGn(\vbfd_n)\), is
the random directed multigraph on \([n]\) generated by giving \(d^{-}_{v}\) \emph{heads}
(in-half-edges) and \(d^{+}_{i}\) \emph{tails} (out-half-edges) to vertex \(v\), and then pairing the
heads and the tails uniformly at random. Observe that \(\vecGn\) is not necessarily simple: loops and multiedges are allowed.

The directed configuration model is of practical importance as many complex real-world networks are
directed.  For instance, it has been used to study neural networks~\cite{amini2010}, Google's
PageRank algorithm~\cite{chen2017}, and social networks~\cite{li2018}.

The original paper by Cooper and Frieze~\cite{cooper2004} studies the birth of a linear size strongly
connected component in \(\vecGn\). Their result was recently improved by
Graf~\cite{graf2016} and by the two authors~\cite{cai2020}.

Lately, there has been some progress on the distances in the directed configuration model for
bi-degree sequences with finite covariances. Typical distances in $\vecGn$ were studied by van der
Hoorn and Olvera-Cravioto~\cite{hoorn2018}.  Caputo and Quattropani~\cite{caputo2020a} showed that
the diameter of $\vecGn$ is asymptotically equal to the typical distance, provided that
$\delta^{\pm}\geq 2$ and $\Delta^\pm=O(1)$. In our previous work~\cite{cai2020a}, we showed that the
diameter has different behaviour if no constraints on the minimum degree are imposed.

One motivation to the study of distances in \(\vecGn\) is their close connection to certain properties
of random walks, in particular, to their stationary distributions, denoted by $\pi$. While $\pi$
is trivially determined by the degree sequence in undirected graphs, in the directed case $\pi$ is a
complicated random measure that depends on the geometry of the random digraph.  Cooper and
Frieze~\cite{cooper2012} initiated the study of $\pi$ in random digraphs, determining it in the
strong connectivity regime of the directed Erd\H os-R\'enyi random graph. They also established a
relation between the minimum stationary value and stopping times such as the hitting
and the cover time.  Extremal stationary values for the $r$-out random digraph were studied by
Addario-Berry, Balle, and the second author~\cite{addario-berry2020}.

Regarding the directed configuration model, Bordenave, Caputo and Salez~\cite{bordenave2018,bordenave2019} studied the mixing time of a
random walk on $\vecGn$ and showed that it exhibits cutoff, \new{a sharp threshold phenomenon for the convergence of the random walk to equilibrium}. Additionally, 
they proved that for a vertex $v\in[n]$, $\pi(v)$ is essentially determined by the local
in-neighbourhood of $v$ and well-approximated by a deterministic law (see~\autoref{rem:empirical}). 
These results provide a precise description of typical stationary probabilities but fall short of capturing
the exceptional values of $\pi$. 
 
In~\cite{bordenave2018}, the authors raised the question of studying the extremal values of the
stationary distribution in $\vecGn$.  Let $\pimin$ and $\pimax$ be the smallest and largest positive
values of $\pi$, respectively. From now on and throughout this paper, we will assume that all
degrees are bounded; i.e., $\Delta^\pm=O(1)$.  In this context, the condition $\delta^+\geq 2$ is
essentially necessary to avoid (possibly many) trivial stationary measures~(see~\autoref{rem:SKDJ}).
Under the additional condition $\delta^{-}\geq 2$, Caputo and Quattropani~\cite{caputo2020a} showed
that the random walk is ergodic with high probability (whp), \new{so we call the case $\delta^{\pm}\geq 2$} the \emph{ergodicity
regime}, and that there exists $C\geq 1$ such that, whp
\begin{align}
C^{-1}\frac{\log^{1-\gamma_0} n}{n}&\leq \pimin \leq C \frac{\log^{1-\gamma_1} n}{n},\label{SMNR}\\
C^{-1}\frac{\log^{1-\kappa_1} n}{n}&\leq \pimax \leq C \frac{\log^{1-\kappa_0} n}{n},\label{SMNS}
\end{align}
where $\gamma_0\geq \gamma_1\geq 1$ are defined in terms of $\delta^-$ and $\Delta^+$, and
$\kappa_0\leq \kappa_1\leq 1$ are defined in terms of $\delta^+$ and $\Delta^-$.  \new{While in general these bounds are not tight,} $\gamma_0= \gamma_1$ and $\kappa_0= \kappa_1$ if there are
linearly many vertices with degrees $(\delta^-,\Delta^+)$ and $(\delta^+,\Delta^-)$, respectively.
Finally, using the bound on $\pimin$, the authors showed that in the ergodicity regime the cover
time satisfies whp
\begin{equation}\label{RESG}
    C^{-1} n \log^{\gamma_1}{n} \leq \tcov \leq C n \log^{\gamma_0}{n} .
\end{equation}

The main purpose of this paper is to study the extremal values of the stationary distribution
outside the ergodicity regime. If $\delta^+\geq 2$ but no condition on the minimum in-degree is
imposed, the random walk might fail to be ergodic, but the stationary measure is whp unique; we call
it the \emph{uniqueness regime}.  While~\eqref{SMNR} and~\eqref{SMNS} indicate that the extremal values
exhibit logarithmic fluctuations in the ergodicity regime, our main result shows in that uniqueness
regime $\pimin$ can have polynomial deviations with respect to the typical stationary values. As an easy
consequence of our proof, we determine the hitting and the cover time up to subpolynomial
multiplicative terms.

\subsection{Notation and results}\label{sec:res}

Before stating our results, we need to define some parameters of the bi-degree sequence \(\vbfd_{n}\).  Although \(n\) does not appear in most of the notation, the reader should keep in mind
that all the parameters defined here depend on \(n\).

Let \(n_{k,\ell}\coloneqq |\{v: (d_{v}^-,d_{v}^+)=(k,\ell)\}|\) be the number of pairs \((k,\ell)\) in
\(\vbfd_{n}\). 
Let \(D=(D^{-},D^{+})\) be the degree pair (number of heads and tails) of a uniform
random vertex. In other words, $\p{D=(k,\ell)} = n_{k,\ell}/n$.

The bivariate generating function of \(D\) is defined by
\begin{equation}\label{FDMH}
    G_D(z,w)
    \coloneqq 
    \sum_{k ,\ell \ge 0} \p{D=(k,\ell)} z^{k} w^{\ell}
    .
\end{equation}
Let \(\lambda \coloneqq m/n\).  Define the \emph{out-size-biased} distribution of
\(D\), \( \dout = (\doutm, \doutp)\), by
\begin{equation}
    \label{UHJC}
    \p{\dout=(k,\ell)}\coloneqq \frac{\ell}{\lambda}\cdot \p{D = (k,\ell)}
    ,
    \qquad
    \text{for }
    k \ge 0, \ell \ge 0
    .
\end{equation}
In words, \(\dout\) is the degree distribution of a vertex incident to a uniform random tail. 
Consider a branching process with offspring distribution  \(\dout^-\) and let \(s^{-}\) be its survival probability. \new{Note that $s^-=1$ if and only if $\delta^-\geq 1$.}

Define  the \emph{expansion rate} by
\begin{equation}\label{SPEK}
\nupm \coloneqq \frac{1}{\lambda}\frac{\partial^2 G_D}{\partial z \partial w}(1,1) = \new{\frac{1}{\lambda }\sum_{k,\ell\geq 0} k\ell \cdot \p{D=(k,\ell)}},
\end{equation}
and the \emph{subcritical in-expansion rate} by
\begin{equation}\label{eq:hnu}
\hnu^-\coloneqq
\frac{1}{\lambda}
\frac{\partial^2 G_D}{\partial z\partial w} (1-s^{-},1) \new{= \frac{1}{\lambda }\sum_{k,\ell\geq 0} k(1-s^-)^{k-1} \ell  \cdot \p{D=(k,\ell)}}\in [0,1)
,
\end{equation}
\new{where if $s^-=1$ and $k\in\{0,1\}$, we use the conventions $0/0=0$ and $0^0=1$. With these conventions, if $\hnu^-=0$, then $s^-=1$.}

\new{Observe the symmetry in the definition of $\nupm$. In fact, this parameter can be understood as the expected size of the in-neighbourhood of any tail, or equivalently, as the expected size of the out-neighbourhood of any head. In particular, the neighbourhoods of any half-edge grow at an expected rate roughly $\nupm$. This symmetry is broken when considering subcritical growth: $\hnu^-$, which is less than $1$, turns out to be approximately the expected rate of growth of the in-neighbourhood of any tail, conditional on the event that this neighbourhood dies out in few steps. (See~\autoref{sec:strategy} for more details.)
}

\new{For $\hnu^- >0$}, let \(\dtilde = (\dtildem,\dtildep)\) be the random vector with distribution
\new{
\begin{equation}\label{XLAJ}
    \p{\dtilde=(k,\ell)}
    \coloneqq
    \frac{k \left(1-s^-\right){}^{k-1}\ell}{\hnu^{-}\lambda}
    \cdot \p{D = (k,\ell)}
    ,
    \qquad
    \text{for }
    k \ge 1, \ell \ge 0
    ,
\end{equation}
}
\new{which can be thought as the degree distribution of a vertex incident to a uniform random tail, conditional on only one of its in-neighbours being in a large strongly connected component. (See~\autoref{sec:strategy} for a more precise description.)}


Define the \emph{subcritical in-entropy} as 
\begin{equation}\label{SKFN}
\hat{H}^- \coloneqq \E{\log\dtildep} 
=\frac{1}{\hnu^- \lambda} \sum_{k,\ell\geq 0} k  (1-s^-){}^{k-1} \ell\log \ell\cdot  \p{D=(k,\ell)},
\end{equation}
\new{if $\hnu^- >0$, where $0\log{0}=0$ by convention,
and assign an arbitrary value to $\hat{H}^-$ if $\hnu^- =0$.}
This parameter can be seen as an average row entropy of certain transition matrix (see~\cite{bordenave2019}) and is related to the probability that the random walk follows a typical trajectory under subcritical in-growth. 

The \emph{large deviation rate function} (or \emph{Cram\'er function}) of \(Z=\log \dtildep\) is defined by
\begin{equation}\label{YBQH}
    I(z)\coloneqq \sup_{x \in \mathbb{R}} \{xz -\log \bE[e^{x Z}]\}
    ,
    \qquad
    \text{for }
    z \in \dsR
    .
\end{equation}

Note that \new{$I$ is non-negative}, attaining its minimum at $\hat{H}^{-}$, for which \(I(\hat{H}^{-})=0\). 
Let
\begin{equation}\label{ODRS}
    \phi(a) \coloneqq 
\begin{cases} 
    \frac{1}{a} \left(|\log{\hnu^-}| + I(a\hat{H}^-)\right), &\text{if }\hnu^->0,\\
    \new{\infty,} &\new{\text{if }\hnu^-=0}.
\end{cases}
\end{equation}
and let $a_{0}$ be a minimising value in $(0,\infty)$. In fact, since $I(a\hat{H}^-)$ is minimised at $a=1$, necessarily $a_0\in [1,\infty)$.

We remark that all parameters defined above, as well as $\phi$ and $a_0$, depend implicitly on $\vbfd_{n}$ and so on $n$. While we do not make the dependence explicit to keep the notation simple, this should be kept in mind. \new{Additionally, we will not make any assumption on the limiting behaviour of the degree distribution, as it is often the case when studying the configuration model.}
 
Conditioned on \(\vecGn\), a simple random walk on \(\vecGn\) is a Markov process \(\walk \) with
state space \([n]\). Given the current vertex \(Z_{t}\), the walk chooses an out-neighbour of
\(Z_{t}\) uniformly at random as \(Z_{t+1}\), which is always possible as we assume
$\delta^+\geq 2$. If as \(t \to \infty\) the distribution of \(Z_{t}\) converges to the same
distribution \(\pi\) regardless of the choice of \(Z_{0}\), i.e., if there exists a probability
density function \(\pi\) on \([n]\)
such that
\begin{equation}\label{BRDP}
    \lim_{t \to \infty} \sup_{u,v \in [n]} \abs{\p{Z_{t} = v \mid Z_{0} = u} - \pi(v)}
    =
    0
    ,
\end{equation}
we say \(\walk\) has a \emph{unique stationary distribution} \(\pi\).
We define 
\begin{equation}\label{ADMV}
    \pi_{\min} \coloneqq \min\left\{\pi(v) : v \in [n], \pi(v) > 0\right\}
    ,
    \qquad
    \pi_{\max} \coloneqq \max\left\{\pi(v) : v \in [n]\right\}
    ,
\end{equation}
if the walk has a unique stationary distribution, and assign arbitrary values to them otherwise.

Our main result is the following:
\begin{thm}\label{thm:main}
    Assume that \(\delta^{+} \ge 2\) and \(\Delta^{\pm} \le M\) where \(M \in \dsN\) is a fixed
    integer. For every $\varepsilon>0$, with high probability \new{as $n\to \infty$},
    \begin{equation}\label{MHUZ1}
         n^{-(1+\hat{H}^-/\phi (a_{0}))-\varepsilon} \leq \pimin  \leq  n^{-(1+\hat{H}^-/\phi (a_{0}))+\varepsilon}.
    \end{equation}
\end{thm}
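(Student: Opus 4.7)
The plan is to combine the trajectory representation of $\pi$ with a two-parameter tilted-measure argument capturing the two mechanisms for small stationary values in the unicity regime: atypically thin in-neighbourhoods (subcritical branching) and atypically light trajectory weights (Cram\'er-type large deviations). Fixing a depth horizon $h = h(n)$ of order $\log n/(a_0 \phi(a_0))$, the basic identity
\[
\pi(i) = \sum_{v_0 \to v_1 \to \cdots \to v_h = i} \pi(v_0) \prod_{k=1}^{h} \frac{1}{d^+(v_{k-1})}
\]
expresses $\pi(i)$ via length-$h$ trajectories ending at $i$. The local approximation and mixing results of~\cite{bordenave2018,bordenave2019} ensure that $\pi(v_0) = n^{-1+o(1)}$ on a set of vertices $v_0$ carrying almost all the stationary mass, so the analysis reduces to understanding the random weighted tree of length-$h$ in-paths terminating at $i$.

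For the upper bound $\pimin \le n^{-(1+\hat{H}^-/\phi(a_0)-o(1))}$, I would identify a vertex $i$ whose depth-$h$ in-neighbourhood satisfies two joint atypicalities: (i) the tree barely survives to depth $h$ in the sense of the conjugate subcritical branching process with mean $\nuhat^-$, so that the number $N_h(i)$ of length-$h$ in-paths is at most a fixed constant; and (ii) the $d^+$-weights along these surviving paths are atypically large, with $\sum_{k=1}^h \log d^+(v_{k-1}) \ge a_0 \hat{H}^- h$ for each such path. Kolmogorov--Yaglom asymptotics give that event (i) has probability of order $(\nuhat^-)^h$, and conditional on (i), event (ii) has probability of order $e^{-h I(a_0 \hat{H}^-)}$ by Cram\'er's theorem for $\log \dtildep$. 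Their joint probability is thus of order $e^{-h a_0 \phi(a_0)}$, and with our choice of $h$ the expected number of such vertices is $\Theta(1)$. A second-moment calculation or Chen--Stein Poisson approximation then yields at least one such vertex whp, for which the trajectory sum gives $\pi(i) \le n^{-1+o(1)} \cdot e^{-a_0 \hat{H}^- h} = n^{-(1+\hat{H}^-/\phi(a_0)-o(1))}$.

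For the matching lower bound $\pimin \ge n^{-(1+\hat{H}^-/\phi(a_0)+o(1))}$ I would use a union bound. Any vertex $i$ with $\pi(i) \le n^{-(1+c)}$ must realise an $(a,h)$-atypicality of its in-tree (with $h \ge c\log n/(a\hat{H}^-)$ and the joint structure of (i)--(ii) above for some $a \in [1, \log\Delta^+/\hat{H}^-]$). Each such atypicality at a fixed vertex has probability at most $e^{-h a \phi(a)(1+o(1))} = n^{-c\phi(a)/\hat{H}^-(1+o(1))}$. Union bounding over the $n$ vertices and a fine $\varepsilon$-net in $a$, the probability that any vertex has $\pi(i) \le n^{-(1+c)}$ is $o(1)$ whenever $c\phi(a)/\hat{H}^- > 1$ for every feasible $a$, which holds precisely when $c > \hat{H}^-/\phi(a_0)$, since $a_0$ is the minimiser of $\phi$.

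The main obstacles I anticipate are the following. (a) Making the truncation at depth $h$ rigorous, by controlling the residual contribution to $\pi(i)$ from trajectories of length $> h$ and by establishing a uniform bound $\pi(v_0) \le n^{-1+o(1)}$ on a large enough set of starting vertices; since the set of low-$\pi$ vertices is a priori the object of study, this has to be handled via a bootstrap argument combined with $\pimax$-type estimates. (b) Coupling the in-neighbourhood of $\vecGn$ with an idealised Galton--Watson tree at depth $h$ of order $\log n$ in order to transfer the Kolmogorov--Yaglom and Cram\'er estimates; the natural union-of-balls coupling only works at sub-logarithmic depth and will require a sprinkling or two-step exploration argument to extend. (c) Controlling correlations between candidate in-trees in the second moment of the upper bound; with $\Delta^\pm = O(1)$ the expected overlap of the in-trees of two distinct candidate vertices should be small enough that the second moment is comparable with the square of the first, but the technical details of switching back and forth between the conditioned and unconditioned pairing measures will need care.
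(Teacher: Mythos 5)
Your high-level decomposition is the right one and matches the paper's: the extremal exponent comes from trading off a subcritical (conjugate) branching survival cost $|\log\nuhat^-|$ per generation against a Cram\'er cost $I(a\hat H^-)$ per generation, minimised at $a=a_0$, and the lower bound on $\pimin$ is an upper bound on the probability of the atypicality plus a union bound, while the upper bound on $\pimin$ is an existence statement proved by a second moment. You also correctly anticipate the three main technical hurdles. However there are two concrete gaps that would prevent the argument as stated from closing.

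First, the upper bound on $\pimin$: choosing $h$ so that the expected number of doubly-atypical vertices is $\Theta(1)$ does not give existence whp. Even with perfect Poisson approximation, a $\Theta(1)$-mean Poisson is zero with probability bounded away from $0$. The paper instead sets $h_1 = (1-\varepsilon)\log n/(a_0\phi(a_0))$ (see~\eqref{JOAY}) so that the expected number of atypical heads is $\Theta(n^{\varepsilon})$, and the second moment then yields at least one whp; sending $\varepsilon\to 0$ at the end of the argument absorbs the slack into the $o(1)$ in the exponent. A related point: the step $\pi(i)\le n^{-1+o(1)}\Gamma^{\bfT}_{h_1}(i)$ relies on $\pimax = n^{-1+o(1)}$, which does not require a bootstrap; as the paper notes in~\autoref{rem:SDKA}, the proof of the corresponding bound in~\cite{caputo2020a} never uses $\delta^-\ge 2$, so it applies directly.

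Second, and more substantially, the lower bound: the step ``any vertex $i$ with $\pi(i)\le n^{-(1+c)}$ must realise an $(a,h)$-atypicality of its in-tree'' is not a deterministic implication, and the argument cannot proceed by union-bounding over it. The quantity $\pi(i)$ is global, and a priori could be small for reasons not visible in a depth-$O(\log n)$ in-neighbourhood. The paper's route is instead: fix $\tau(f)$ of order $\tent + O(\log n)$ and prove, uniformly over starting heads $e\in\cE^-$ and endpoints $f\in\cE^-_0$, that $P^{\tau(f)}(e,f)\ge n^{-(1+\hat H^-/\phi(a_0)+o(1))}$ whp (\autoref{lem:PT}); stationarity $\pi^{\bfe}(f)=\sum_e\pi^{\bfe}(e)P^{\tau(f)}(e,f)$ then gives the bound without needing any control of $\pi$ on the right. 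Proving this uniform lower bound on $P^{\tau}$ in turn uses the Bordenave--Caputo--Salez ``nice paths'' decomposition into an out-phase from $e$ (whose total weight $A_{e,f}$ is $\ge 1/2$ by the mixing analysis, with no dependence on $\delta^-$) and an in-phase to $f$ (whose truncated weight $B_{e,f}\ge \omega\gamma/4$ whp); the in-phase lower bound is where \autoref{cor:DWPD} (the branching-process atypicality upper bound), the truncated martingale \autoref{prop:martingale}, and Chatterjee's concentration inequality for random pairings all enter, and it is here that the union bound over $f$ happens. The exponent-matching consideration you describe is precisely why the bound $O(n^{-(1+\varepsilon/4)})$ on the bad event is the right order, but it is an event about $B_{e,f}$, not about $\pi(f)$ directly. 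Finally, on your obstacle (b): the paper's coupling Lemma~\ref{lem:eq_T_GW} handles this by perturbing $\eta$ to $\eta^{\da},\eta^{\ua}$ and only requires that the number of paired half-edges stay below $n^{\beta/10}$, which it does because the in-exploration is cut off at width $\omega=\log^6 n$ and depth a small-constant multiple of $\log n$; no sprinkling is needed.
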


Clearly, if $v\in [n]$ is chosen uniformly at random, then $\E{\pi(v)}=1/n$. Our theorem shows that the minimum stationary value exhibits a polynomial deviation from the expected value. Namely, the additional exponent \(\hat{H}^-/\phi(a_{0})\) comes from the fact that whp some vertices that are exceedingly difficult to reach by a simple random walk.
From the intuitive point of view, a path could be hard to follow either because it is long or because it contains many high branching vertices (i.e. vertices with large out-degrees).
On the one hand, vertices that are furthest from the bulk of the graph (i.e. most of the vertices) can only be accessed from there through long paths; however, there are not many such vertices and all long paths leading to them behave typically in terms of branching.
On the other hand, vertices that are closer to the bulk can be reached from there through shorter paths; however, there are many such vertices and some of the paths leading to them behave atypically bad from the branching point of view.
Therefore, determining the minimal stationary value can be seen as a competitive combination of two properties \new{of long paths that connect the bulk to a vertex}, which implicitly appear in the definition of $\phi(a)$ in~\eqref{ODRS}: 
\begin{itemize}
\item[(1)] being long, which is controlled by the term $|\log \hnu^-|$; and
\item[(2)] having vertices of large out-degree, which is controlled by the term $I(a\hat{H}^-)$.
\end{itemize}
The optimal ratio between these two factors is given by the value $a_0$ that minimises $\phi(a)$.

\begin{rem}\label{rem:crit}(Critical distance)
For any digraph $G$, let $\text{dist}(u,v)$ be the length of the shortest directed path from $u$ to $v$ if it exists, and $\text{dist}(u,v)=\infty$ otherwise. Define the \emph{diameter} and the \emph{critical distance} of $G$ as follows:
\begin{align}\label{FODE}
\diam(G)&= \max_{u,v\in [n]} \text{dist}(u,v),\\
\text{crit}(G)&= \max_{u,v\in [n]\atop \pi(v)=\pimin} \text{dist}(u,v).
\end{align}
Then, the ratio between $\text{crit}(G)$ and $\diam(G)$ can be used to measure \new{which is the trade-off between (1) and (2)} in determining $\pimin$. The authors have determined the asymptotic behaviour of the diameter in~\cite{cai2020a}, which we now present. Define $s^+$ to be the survival probability of a branching process with offspring distribution having generating function $\frac{1}{\lambda}\frac{\partial G_D}{\partial z}(1,w)$ and $\hnu^+\coloneqq \tfrac{1}{\lambda} \tfrac{\partial^2 G_D}{\partial z\partial w} (1,1-s^{+})$. Since $\delta^+\geq 2$, we have $s^+=1$ and $\hnu^+=0$. It follows from~\cite{cai2020a} that
\begin{align}\label{SDIM}
\left(\frac{1}{|\log\hnu^-|} + \frac{1}{\log \nupm}\right)^{-1}\frac{\diam(\vecGn(\vbfd_n))}{\log n} \to 1 \quad \text{ in probability}.
\end{align}
From the proof of~\autoref{thm:main} (see~\eqref{SODW}), one can extract that
\begin{align}\label{SKEW}
\left(\frac{1}{|\log\hnu^-|+I(a_0 \hat{H}^-)} + \frac{1}{\log \nupm} \right)^{-1}\frac{\text{crit}(\vecGn(\vbfd_n))}{\log n} \to 1 \quad \text{ in probability},
\end{align}
which only coincides with the diameter when $a_0=1$.
\end{rem}

\begin{rem}\label{rem:mindeg}(Minimum in-degree)
\autoref{thm:main} is of particular interest for \(\delta^{-} \in\{0,1\}\). \new{If $\delta^-\geq 2$, then $s^-=0$ and since $\p{D=(1,\ell)}=0$ for all $\ell \geq 0$, we have that} $\hnu^-=0$ implying \(\hat{H}^-/\phi(a_{0})=0\), and the result is trivial. (See~\eqref{SMNR} for much more precise bounds on $\pimin$ when $\delta^{-}\geq 2$.) The most natural case is $\delta^-=0$;  then $s^-<1$ and $\hnu^->0$. In the particular case $\delta^-=1$, we have $s^-=1$ and a simple computation gives $\hnu^-= \p{\doutm=1}>0$.
In particular, for \(\delta^{-} \in\{0,1\}\), if $\p{\doutm=\delta^-}$ is bounded away from $0$ as $n\to \infty$, then the additional exponent \(\hat{H}^-/\phi(a_{0})\) in \eqref{MHUZ1} is also bounded away from $0$ as $n\to \infty$, yielding a superlinearly small $\pimin$ in contrast to~\eqref{SMNR}.
\end{rem}

\begin{rem}
    Alternatively,~\eqref{MHUZ1} can be read as 
    \begin{align}\label{SPDK}
        \left(1+\frac{\hat{H}^-}{\phi (a_{0})}\right)^{-1}\frac{\log \pimin^{-1}}{\log n}
        \to 1 \quad \text{ in probability,}
    \end{align}   
     From our proof, one can obtain a upper bound \new{on the distance to the limit of}
    order ${(\log n)}^{-1/2}$. Whp bounds for $\pimin$ that are tight up to a polylogarithmic factor, like the
    ones in~\eqref{SMNR}, are unlikely to hold in this setting due to the use of large deviation theory.
    In fact, we believe that the rate of convergence ${(\log n)}^{-1/2}$ cannot be vastly improved, as
    this is the rate of convergence in Cramer's theorem (see \autoref{thm:cramer}).
\end{rem}

\begin{rem} (Simple graphs)
Let \(\vecGns\) be \(\vecGn\) conditioned on being a simple directed graph\footnote{As the degrees are uniformly bounded and $\sum_{v\in [n]} d^{+}_{v} = \sum_{v \in [n]}d^{-}_{v}$, it can be checked that the hypothesis of the Fulkerson-Chen-Anstee theorem are satisfied for $n$ sufficiently large with respect to $M$, and $\vbfd_n$ is digraphic.}. Then
    \(\vecGns\) is distributed uniformly among all simple directed graphs with degree sequence
    \(\vbfd_{n}\). It is well-known that the probability of \(\vecGn\) being simple is bounded away from
    zero when the maximum degrees are bounded (see, e.g.,~\cite{blanchet2013, janson2009}). Thus,~\autoref{thm:main} also holds for
    \(\vecGns\).
\end{rem}

\begin{rem}[Minimum out-degree and uniqueness of $\pi$]\label{rem:SKDJ}
  \autoref{thm:main} requires $\delta^+\geq 2$. If $\delta^+=0$, then the stationary distribution
    is either trivial or non-unique. If $\delta^+=1$ and $\p{D^+=1}$ is bounded away
    from $0$ as $n\to \infty$, \new{then with constant probability  $\vecGn$ will have more than one vertex whose out-neighbourhood is a single loop. Each of these vertices form their own strongly connected component, giving rise to multiple trivial stationary distributions.}~\autoref{prop:strongly} shows
    that under $\delta^+\geq 2$, $\walk$ has a unique stationary distribution whp. Uniqueness of the
    equilibrium measure whp can be also shown if $\delta^+=1$ and $\p{D^+=1}=o(1)$. It is likely
    that the conclusion of~\autoref{thm:main} still holds in this situation. 
\end{rem}

\begin{rem}[Maximum stationary value]\label{rem:SDKA}
In this paper we turned our attention to $\pimin$. By an averaging argument, $\pimax\geq 1/n$. Moreover, one can
check that the proof of the second inequality in~\eqref{SMNS} (see~\cite[Section 3.5]{caputo2020a})
does not use any condition on $\delta^-$. Therefore, in the setting of
\autoref{thm:main}, for every $\varepsilon>0$ and whp
\begin{align}\label{FDOE}
n^{-1-\varepsilon} \leq \pimax\leq  n^{-1+\varepsilon}. 
\end{align}
We refer the interested reader to~\cite{cai2021a} for the behaviour of $\pimax$ when the maximum
in-degree goes to infinity as $n\to \infty$, and its connection to the Pagerank random walk.
\end{rem}

\begin{rem}[Explicit polynomial exponents for $\pimin$]
    Since for a general distribution there is no closed-form expression for $I(z)$, \autoref{thm:main} provides an implicit polynomial exponent. Nevertheless, $I(z)$ can be
    explicitly computed for some particular bi-degree sequences, yielding explicit polynomial
    exponents.  In~\autoref{sec:examples}, we give two such examples: one where $I(z)=\infty$ for
    any $z\neq \hat{H}^-$, and one where $I(z)$ can be expressed in terms of the large deviation rate function of a
    Bernoulli random variable. If explicit bounds are required, there is
    an extensive literature on concentration inequalities for the sum of independent bounded random
    variables, such as Bernstein's and Bennett's inequalities, (see, e.g.,~\cite{petrov1975}) which
    provide lower bounds on $I(z)$ in terms of the  moments of the distribution.
    Alternatively, rigorous numerical bounds can be computed with interval arithmetic libraries such
    as~\cite{sanders2020}.
\end{rem}

\begin{rem}(Large polynomial exponents)
    We can choose \(\vbfd_{n}\) to make the polynomial exponent in~\eqref{MHUZ1} arbitrarily large. 
    For instance, fix $M\in \mathbb{N}$ and for $n$ multiple of $M-1$ consider the degree sequence of length $n$ with $n/(M-1)$ vertices of degree $(M,2)$ and the rest of degree $(1,2)$.
	As $\delta^-\geq 1$, then $s^-=1$. Since the graph is $2$-out-regular, we have $\hnu^-=2$, \(\hat{H}^-=\log{2}\), and \(I(a\hat{H}^-)=\infty\) for $a\neq 1$ and $I(\hat{H})=0$. So the minimum of $\phi(a)$ is attained at $a_0=1$ and \(\phi(a_{0}) = \abs{\log \hnu^-}\). From the generating function of $D$, we obtain \(\hnu^- = 1 + O(M^{-1})\) as $M\to \infty$.
    Thus, \(\hat{H}^-/\phi(a_{0}) = \Omega(M)\) as  $M\to \infty$ and we can make the polynomial exponent in~\eqref{MHUZ1} as large as we want by increasing
    \(M\).
\end{rem}

\begin{rem}\label{rem:empirical}(The lower tail)
Consider the empirical measure $\psi=\frac{1}{n}\sum_{v \in[n]} \delta_{\{n\pi(v)\}}$; that is,
\(n\) times the stationary value of a uniform random vertex. In~\cite{bordenave2018}, it was shown that, \new{under suitable conditions,} there exists
a deterministic law $\cL$ such that 
$d_{\cW}(\psi,\cL) \to 0$ in probability, where $d_{\cW}$ is the $1$-Wasserstein metric. Our proof of~\autoref{thm:main} allows us to control the
lower tail of $\psi$: for every $\alpha \in [0,\hat{H}^-/\phi(a_0)]$, 
letting $\beta = \frac{\alpha \phi \left(a_0\right)}{\hat{H}^-} \in [0,1]$, we have  
\begin{equation}\label{EABU}
    \bE[\psi((0, n^{-\alpha}])] 
    =
    \frac{1}{n}\sum_{v \in [n]} \p{0<\pi(v) \le n^{-(1+\alpha)}}
    = 
    n^{-\beta+o(1)}.
\end{equation}
By computing the second moment, it can be shown that $\psi((0, n^{-\alpha}])$ is concentrated
around its expected value. See~\autoref{sec:empirical} for the proof of~\eqref{EABU}.
\end{rem}

\new{A strongly connected component is \emph{closed} if there is no path from it to any other component.}
Let $G$ be a directed graph with vertex set $[n]$ with \new{a unique closed} strongly connected component $\cC_0$ with vertex set $\cV_0$. 
Let $\walk$ be a simple random walk on $G$. Let
\new{$\tau_{v}\coloneqq\inf\{t\geq 0: Z_t=v\}$}. The \emph{hitting time} of $\walk$ is defined as
\begin{equation}\label{IZSG}
    \thit\coloneqq \max_{\substack{u\in [n] \\ v\in \cV_0}} \bE[\new{\tau_{v}\mid Z_0=u}]
.
\end{equation}
Let \new{$\tau^C\coloneqq \inf\{t\geq 0: \cV_0 \subseteq \cup_{r=0}^t \{Z_r\}\}$}. The \emph{cover time} of $\walk$ is defined as
\begin{equation}\label{ALFW}
\new{\tcov\coloneqq \max_{u\in [n]} \bE[\tau^C\mid Z_0=u]}
.
\end{equation}
 
As a application  of the proof of~\autoref{thm:main}, we determine the hitting and the cover time of the directed configuration model, up
to subpolynomial terms.
\begin{thm}\label{thm:hitcov}
    Under the hypothesis of~\autoref{thm:main}, for any $\varepsilon>0$, whp
    \begin{equation}\label{MHUZ}
		n^{1+\hat{H}^-/\phi(a_{0})-\varepsilon}\leq \thit\leq  \tcov \leq  
        n^{1+\hat{H}^-/\phi(a_{0})+\varepsilon}.
    \end{equation}
\end{thm}

To simplify the notation, we avoid using \(\ceil{\cdot}\) and \(\floor{\cdot}\) to make certain
parameters integers. Such omissions should be clear from the context and do not affect the validity of the proofs.


\section{Outline of the proof}\label{sec:strategy}

First of all, we will assume throughout the proof that the minimum in-degree satisfies $\delta^{-}\in\{0,1\}$, in which case $\hnu^->0$; we will crucially use the latter in the proofs. See~\autoref{rem:mindeg} for a discussion of the case \(\delta^{-}\geq 2\).

One of main parts of the proof is to understand the depth and shape of the in-neighbourhoods of vertices. In fact, it will be more convenient to explore the in-neighbourhoods of heads. Our proof will be guided by the following well-known heuristics for the (directed) configuration model. Let $f^-$ be a head whose in-neighbourhood we aim to explore. Pair $f^-$ with $f^+$, a tail chosen uniformly at random among all tails in $\vecGn$. Let $V$ be the (random) vertex incident to $f^+$. Since a vertex with degree $(k,\ell)$ has $\ell$ tails that could be paired with $f^-$, the degree distribution of $V$ is $\dout$, \new{defined in~\eqref{UHJC}}. On the one hand, this provides $\doutm$ new heads to keep exploring the in-neighbourhood from $V$. On the other hand, a random walk at $V$ has \new{only one way out of $\doutp$ many to move to $f^-$}. This idealized picture is not far from what actually happens.

To understand the growth of in-neighbourhoods we will couple them with marked branching processes (see~\autoref{sec:branching} for a precise definition) governed by the distribution
\begin{align}\label{eq:SADD}
\eta=(\xi,\zeta)\eql \dout,
\end{align}
where $\xi$ is the offspring distribution corresponding to the number of heads incident to a vertex chosen as explained above, and  $\zeta$ is the marking distribution corresponding to the number of tails of that vertex. In this sense the  parameters $\nu,\hnu,\hat{H}$ that will be defined in \autoref{sec:branching}, should be understood as the branching process analogues of the degree distribution dependent parameters $\nupm,\hnu^-,\hat{H}^-$ defined in the introduction. 

In~\autoref{sec:branching} we will do the analysis of the marked branching process. \new{By~\eqref{SPEK} and our hypotheses, we conclude that $\nupm \geq \delta^+\geq 2$ and the marked branching process typically grows exponentially fast. When coupled with the digraph exploration process}, this corresponds to the behaviour of typical heads in $\vecGn$; but those play no role when determining extremal parameters. However, as $\delta^-\in \{0,1\}$, exceptionally, the branching process can remain ``small'' without going extinct. Precisely, the probability that a process survives but stays small for $t$ generations is roughly $(\hnu^-)^t$, \new{where $\hnu^-$ is the subcritical in-expansion rate defined in~\eqref{eq:hnu}}. In addition, and also exceptionally, the marks along \new{short branches of the process} can be extraordinary large; this happens with probability related to the rate function $I(z)$ of the random variable $\log \tilde{D}_{\tout}^+$.~\autoref{thm:LB} and~\autoref{thm:UB} estimate from below and from above, respectively, the probability of growing small positive processes with large marks, and are the main ingredient of our proof. Approximately and informally speaking, they state that for $a\geq 1$ and sufficiently large $t\in \mathbb{N}$ and $\omega\new{=\omega(t)}$ satisfying some conditions, with probability $e^{-a\phi(a)t}$, 
\begin{itemize}
\item[i)] the weight of paths of length $t$ to the root (defined as a function of the marks) is at most $e^{-a\hat{H}t}$;
\item[ii)] there is at least one and less than $\omega$ elements in the $t$-th generation of the process.
\end{itemize}
From i), we can read the importance of the minimizing value $a_0$: since $\vecGn$ has linearly many heads, we expect to see events related to heads of probability roughly $1/n$. \new{That is, for a fixed $a\geq 1$, if we choose $t$ to be 
\begin{align}\label{SODW}
t_0(a):=\frac{\log{n}}{a\phi(a)},
\end{align}
then $e^{-a\phi(a)t}=1/n$, and we expect to have heads $f^-$ such that all the paths of length $t_0(a)$ ending at them have weight at most 
\begin{align}\label{SOEI}
e^{-a\hat{H}t_0(a)} = n^{-\hat{H}/\phi(a)}.
\end{align}
}

%
Therefore, the weight of paths of length $t_0(a)$ is minimized at $a=a_0$. As it turns out, this weight is a good approximation of the probability that a random walk starting at the \new{$t_0(a)$-th in-neighbourhood of $f^-$, reaches the head after $t_0(a)$ steps.} 

From ii), the size of the $t_0(a)$-th in-neighbourhood, $N$, is at most $\omega$, which in our proof is \new{polylogarithmic in $n$. By~\autoref{rem:SDKA}, $\pi(N)$ is of order $n^{-1+o(1)}$}. 

By stationarity, for any given $t\geq 0$, $\pi(v)$ can be expressed as the sum over all vertices $u$ \new{at distance $t$ to $v$}, of $\pi(u)$ times $P^t(u,v)$, i.e. the probability a random walk moves from $u$ to $v$ in $t$ steps. By the previous considerations, we find that the minimum of the stationary distribution is $n^{-(1+\hat{H}/\phi(a_0))}$ up to subpolynomial terms.


In~\autoref{sec:graph}, we make explicit the coupling between branching processes and digraph exploration processes. This allows us to transfer the results of~\autoref{sec:branching} to the directed configuration model, which is done in~\autoref{sec:pi:min}. Before the proof of~\autoref{thm:main}, in~\autoref{sec:SCC} we show that under the hypothesis of the theorem, there exists a unique closed strongly connected component, and thus $\pimin$ is well-defined. \new{As with the exploration of in-neighbourhoods, it is more convenient to study the random walk on heads}, as presented in~\autoref{sec:RW_heads}. 
We then in~\autoref{sec:lower} proceed to bound $\pimin$ from below. The idea of the proof is simple: Given two heads $e$ and $f$ we compute the probability the walk moves from $e$ to $f$ in a number of steps (that depends on the profile of the in-neighbourhood of $f$); see~\autoref{lem:PT}. This is done by, first, growing the out-neighbourhood of $e$ using the notion of ``nice paths'' introduced by~\cite{bordenave2018} and, second, growing the in-neighbourhood of $f$ until it it has polylogarithmic size, building on the ideas developed in~\autoref{sec:branching} and~\autoref{sec:graph}. A Bernstein-type inequality for permutations allows us to show concentration of the edges from the out- to the in-neighbourhoods previously constructed; \new{here, to ensure bounded increments, we are required to study a truncated version of path-weights (\autoref{prop:martingale})}. 
\autoref{sec:UB} contains the proof of the upper bound on $\pimin$. The proof requires to find a head $f$ that has the right in-neighbourhood profile of depth $t_0(a_0)$, as defined in~\eqref{SODW}. This is done via a second moment argument and using again the results in~\autoref{sec:branching} and~\autoref{sec:graph}. Finally, in~\autoref{sec:empirical} we prove~\autoref{rem:empirical} on the behavior of the lower tail of the empirical distribution.

\autoref{sec:app} is devoted to prove~\autoref{thm:hitcov}. The argument is simple and uses Matthew's bound. We also provide a couple of interesting examples where the polynomial exponent can be computed explicitly.

\section{Supercritical marked branching processes}\label{sec:branching}

In this section, we prove some general results for  marked branching processes with distribution $\eta$.
This part of the paper may be of independent interest.

Let us stress that throughout this section we make sure that all error terms are uniformly bounded only in terms of \new{the maximum values that $\eta$ can attain and the degree of precision of the statements, and conditioned on that}, \emph{do not depend} on the law of $\eta$. This is crucial later in the paper as $\eta$ implicitly depend on the number of vertices $n$, \new{but it is uniformly bounded by hypothesis.}

\subsection{Marked branching processes}\label{sec:mark}

Let \(\eta=(\xi,\zeta)\) be a random vector on \(\dsZ^2_{\ge 0}\) and let
\new{\((\eta_{i,t})_{i\ge 1, t \ge 0}\), with $\eta_{i,t}=(\xi_{i,t},\zeta_{i,t})$}, be a sequence of iid (independent and identically
distributed) copies of \(\eta\).

The \emph{branching process}, also known as the
\emph{Bienaym\'e-Galton-Watson tree}, \((X_t)_{t \ge 0}\) with \emph{offspring distribution} \(\xi\) is defined by
\begin{equation}\label{eq:BP}
    X_{t}=
    \begin{cases}
        1
        &
        \qquad \text{if } t =0
        ,
        \\
        \sum_{i=1}^{X_{t-1}} \xi_{i,t-1}
        &
        \qquad \text{if } t \ge 1
        .
    \end{cases}
\end{equation}
We call $X_t$ the $t$-th \emph{generation} and refer to $(X_r)_{t\geq r\geq 0}$ as the \emph{first $t$ generations of the process}.

\new{As usual, we associate to the branching process a rooted (possibly infinite) tree of \emph{individuals} labelled with indices as follows:  Initially, we add a single individual with index $(1,0)$, the \emph{root} of the tree, corresponding to $X_0=1$. Then, iteratively and for $t\geq 1$, for each individual with index $(i,t-1)$ with $i$ from $1$ to $X_{t-1}$, we add $\xi_{i,t-1}$ individuals, we call them \emph{children} of $(i,t-1)$ and we give them indices $(j,t)$ for increasing and distinct integers $j\geq 1$. Given an individual \( (i,t)\), we call \(i\) its \emph{cousin
index} and \(t\) its \emph{generation index}. The associated tree is denoted by $\GW$ and the tree generated by the first $t$ generations of the process, which contains the individuals up to generation $t+1$, by $\GW_t$.}

\new{The \emph{marked branching process} with  distribution \(\eta=(\xi,\zeta)\) is the pair $(X_t,L_t)_{t\geq 0}$ where $(X_t)_{t\geq 0}$ is a branching process with offspring distribution $\xi$ and, for $t\geq 0$, $L_t=(\zeta_{1,t},\zeta_{2,t},\dots)$ is a sequence of marks in $\mathbb{Z}_{\geq 0}$. It is useful to think about it as the individual $(i,t)$ being marked with $\zeta_{i,t}$.
To keep the notation light and since all branching process appearing in this paper are marked, we will abuse notation and write \((X_t)_{t \ge 0}\) for the marked branching process $(X_t,L_t)_{t\geq 0}$, and $\GW_t$ for the tree generated by the first $t$ generations of the process where every individual at generation $r\leq t$ is assigned its corresponding mark.
}

Let $G_{\eta}$ be the bivariate probability generating function of $\eta$, i.e.,
\begin{equation}\label{BAEI}
    G_{\eta}(z,w) \coloneqq \sum_{k,\ell \ge 0} \p{\eta = (k,\ell)} z^{k}w^\ell,
\end{equation}and let \(G_{\xi}(z)\coloneqq G_{\eta}(z,1)\) be the probability generating function of $\xi$.
Define
\begin{equation}\label{XQMZ}
    \nu \coloneqq \E{\xi} = G_{\xi}'(1) 
    .
\end{equation}

All the results in this section will hold under the following assumption.
\begin{condition}\label{cond:BP}
The distribution $\eta=(\xi,\zeta)$ satisfies 
\begin{itemize}
	\item[(i)] supercritical: \new{$\nu>1$};
	\item[(ii)] bounded support: there exists $M\in \N$ such that $\xi,\zeta\leq M$;
	\item[(iii)] \new{small values probability for} $\xi$: $\p{\xi\in\{0,1\}}>0$;
	\item[(iv)] minimum value for $\zeta$:  $\zeta\geq \new{2}$.
\end{itemize}
\end{condition}



Given $i\in [X_t]$ and $r\in \{0,\dots,t\}$, let $p^{r}(i, t)\in [X_{t-r}]$ be
the cousin index of \( (i,t)\)'s ancestor \(r\) generations away.  We write \(p(i, t)=p^{1}(i, t)\)
and note \(p^{0}(i, t)=i\). For $i\in [X_t]$, define
\begin{align}\label{OSKE}
    \Gamma_{i,t} &\coloneqq \prod_{r=1}^{t} \frac{1}{\zeta_{p^{t-r}(i, t),r}}
    \text{ and }
    \Gamma_t \coloneqq \sum_{i\in [X_t]} \Gamma_{i,t}.
\end{align}

We provide some insight as to the importance of these parameters. One may think about the mark of the element $(i,r)$ as assigning $\zeta_{i,r}$ ``doors'' to it, from which only one leads to its parent and the other ones exit the process. Imagine a particle that starting at the element $(i,t)$ successively moves through uniformly randomly chosen doors until it exits the process or reaches the root. Then, $\Gamma_{i,t}$ is precisely the probability the particle reaches the root, while $\Gamma_{t}$ is the expected number of particles in generation $t$ that reach the root.

In this section, we will mostly be interested in the sequence of random variables $(\Gamma_t)_{t \ge
1}$.  Let \new{$(\cF_t)_{t\geq 0}$ be a filtration where $\cF_t$} is the $\sigma$-algebra generated by the collection of random variables 
$(\eta_{i, r})_{i \ge 1, t \new{\geq} r 
\ge 0}$.
\new{Note that $X_{t+1}$, $\GW_t$ and $\Gamma_t$ are measurable with respect to $\cF_{t}$}. Moreover, $\Gamma_t>0$ if and only if $X_{t}>0$.

\begin{rem}[Weighted branching trees]\label{rem:WBP}
The marked branching process $(X_t)_{t\geq 0}$ together with the random process $(\Gamma_t)_{t\geq 0}$ can be understood as a variant of a \emph{weighted branching process} (WBP)~(see e.g.~\cite{rosler1993}) where the offspring and the weight distributions are not independent.
In the literature, these processes are called \emph{weighted branching trees} (WBT) and have already been used to study the directed configuration model~\cite{chen2016}. In the context of WBT, $\Gamma_{t,i}$ represents the weight of the path from the root to the $i$-th individual at generation $t$, and $\Gamma_t$ represents the total weight of the paths at generation $t$. For the sake of consistency with previous related work~\cite{caputo2020a}, we keep the notation $\Gamma_t$ here instead of the notation coming from WBP and WBT, but we will occasionally use properties that are known to hold for WBT without proving them. For instance, since $\bE[\xi/\zeta] \in (0,\infty)$ by~\autoref{cond:BP}, $\Gamma_t \E{\xi/\zeta}^{-t}$ is a martingale with respect to \((\cF_{t})_{t \ge 0}\) (see e.g.~\cite{chen2016}).
\end{rem}

%
%

\subsection{Conditioned branching processes}

Before introducing the results, some more definitions are needed.

\subsubsection{Conditioned on extinction}

Let \(s \coloneqq \p{\cap_{t \ge 0} \{X_{t}
> 0\}}\) be the \emph{survival probability} of $(X_t)_{t\geq 0}$; by~\autoref{cond:BP} (i), $s\in (0,1]$.
The \emph{conjugate probability
distribution} of \(\xi\), denoted by \(\xihat\), is defined as
\begin{equation}\label{eq:conj}
    \p{\xihat=k} \coloneqq (1-s)^{k-1}\p{\xi=k} \qquad \text{for }k\geq 0,
\end{equation}
when \(s < 1\), while
\begin{equation}\label{XHNO}
    \p{\xihat = 1}  = \p{\xi =1}, \qquad \p{\xihat = 0} = 1-\p{\xi = 1},
\end{equation}
when \(s = 1\).
Define the \emph{subcritical expansion rate} as
\begin{equation}\label{PXNE}
    \hnu \coloneqq \e[\xihat] =  G_{\xi}'(1-s) \in (0, 1].
\end{equation}
\new{Note that $\hnu$ is always positive.} Indeed, if $s<1$, then clearly $\hnu>0$; otherwise $s=1$, which implies $\p{\xi=0}=0$, and by~\autoref{cond:BP} (iii), $\hnu=\p{\xi=1}=\p{\xi\in\{0,1\}}>0$.

\new{Next lemma states that, given a bound $M$ on the coordinates of $\eta$, some of the key parameters of the branching process are uniformly bounded away from $0$ or $1$. This will be important in \autoref{sec:pi:min} to obtain explicit error terms, when the distribution $\eta$ depends on $n$ but has bounded support.}
\begin{lemma}\label{lem:bounded_away}
Let $(X_t)_{t\geq 0}$ satisfy~\autoref{cond:BP}. Then, there exists $c=c(M)>0$ such that
\begin{align}
s\geq c,\qquad \p{\xihat=0}\geq c, \qquad  \text{and} \qquad \hnu \leq 1-c. 
\end{align}
\end{lemma}
\begin{proof}
By~\autoref{cond:BP} (i), $s\geq c$ for an absolute constant $c>0$ \new{(see e.g.~\cite[Theorem 3.1]{vanderhofstad2016}) proving the first inequality.}  Moreover, by~\autoref{cond:BP} (i)-(ii)
\begin{align}\label{WMEO}
\p{\xi\leq 1}\leq 1-\frac{1}{M-1}\leq 1-\frac{1}{M}.
\end{align}
If $s=1$, by~\eqref{XHNO},~\eqref{PXNE} and~\eqref{WMEO}, 
\begin{align}
\p{\xihat=0}\geq \frac{1}{M} , \qquad \hnu = \p{\xihat=1}\leq 1-\frac{1}{M}. 
\end{align}
So let us assume that $s<1$, which implies $\p{\xi=0}>0$. We use the following upper bound on the extinction probability (see~\cite[Theorem 2.1.b]{from2007})
\begin{align}\label{SDKA}
1-s\leq \frac{\p{\xi=0}}{1-\p{\xi=0}-\p{\xi=1}}\leq M\p{\xi=0}, 
\end{align}
where we used~\eqref{WMEO} in the last inequality. As $s<1$, by~\eqref{eq:conj} and~\eqref{SDKA}
\begin{align}
\p{\xihat=0}= \frac{\p{\xi=0}}{1-s}\geq \frac{1}{M},
\end{align}
proving the second inequality.

It remains to bound $\hnu$. 
\new{If $\p{\xi=0}\leq 1/3M$, by~\eqref{SDKA}, $1-s\leq 1/3$.} Since $G''_\xi(x)\geq 0$ for $x\in [0,1)$, we obtain
\new{
\begin{align*}
\hnu = G'_\xi(1-s)&\leq G'_\xi(1/3)\\
&= \sum_{k\geq 1} k 3^{-(k-1)}\p{\xi=k}\\
&\leq \p{\xi= 1}+(2/3) \Big(1-\p{\xi\leq 1}\Big)\\
&\leq 2/3+ (1/3)\p{\xi\leq 1}\\
&\leq 1-1/3M,
\end{align*}
where we used~\eqref{WMEO} in the last inequality}. 

\new{Suppose now that $\p{\xi=0}>1/3M$. Consider $f(x)=G_\xi(x)-x$. Then $f(0)=\p{\xi=0}$ and $f(1-s)=f(1)=0$. By the mean value theorem, there exists $x_0\in (0,1)$ such that 
$$
f'(x_0) = - \p{\xi=0}.
$$
Since $f'(x)$ is increasing such $x_0$ satisfies $f(x_0)<0$ and
$$
\hnu=f'(1-s)+1\leq f'(x_0)+1 =1-\p{\xi=0}< 1-1/3M,
$$
which concludes the proof.}

\end{proof}


The following \emph{duality} property of branching processes is well-known:
\begin{thm}[see, e.g., Theorem 3.7 in~\cite{vanderhofstad2016}]\label{thm:branching}
    Let \( (X_{t})_{t \ge 0}\) be a branching process with offspring distribution \(\xi\) and
    survival probability \(s\).
    If \(s < 1\), then the branching process \( (X_{t})_{t \ge 0}\) conditioned on extinction is distributed as a branching process
    with offspring distribution \(\xihat\).
\end{thm}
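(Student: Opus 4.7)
The plan is to apply the classical one-step conditioning argument. First I would recall that the extinction probability $q \coloneqq 1-s$ is a fixed point of the probability generating function $g$ of $\xi$. Conditioning on the first generation and using the branching property (the subtrees rooted at the offspring of the root are iid copies of the original process), the event of global extinction factorises as the event that each of the $\xi$ subtrees becomes extinct, yielding
\[
q = \sum_{k \ge 0} \mathbb{P}(\xi = k)\, q^{k} = g(q).
\]
When $s < 1$, $q$ is in fact the smallest non-negative fixed point of $g$.

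Next, I would compute the offspring distribution at the root conditional on extinction. By Bayes' rule and the conditional independence of the subtrees given the number of offspring,
\[
\mathbb{P}(\xi = k \mid \text{extinction}) = \frac{\mathbb{P}(\xi = k)\, q^{k}}{q} = (1-s)^{k-1}\, \mathbb{P}(\xi = k),
\]
which is precisely the definition of $\xihat$ in~\eqref{eq:conj}; the identity $g(q)=q$ guarantees that these numbers sum to one.

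To promote this one-step statement to a distributional equality between the full tree conditioned on extinction and a Galton-Watson tree with offspring distribution $\xihat$, I would proceed by induction on a truncation depth $t$. Conditional on $\{X_1 = k\}$ and on extinction, the $k$ subtrees are conditionally independent, each distributed as the original branching process conditioned on its own extinction; applying the inductive hypothesis to each subtree truncated at depth $t-1$ and re-attaching them at the root completes the step. Letting $t \to \infty$ (equivalently, matching finite-dimensional marginals on the space of rooted ordered trees) yields the distributional identity for the full tree.

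The only mild subtlety, and the one that needs the most care, is formalising the conditional independence of the subtrees given extinction: they are not independent copies of the unconditioned process, but they are independent copies of the process \emph{conditioned on its own extinction}. This requires a careful decomposition of the $\sigma$-algebras generated by disjoint subtrees and a direct appeal to the branching property, but is entirely standard.
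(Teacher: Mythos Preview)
Your argument is correct and is precisely the standard proof of this duality (the Bayes computation for the root offspring together with the branching property to propagate inductively). The paper, however, does not prove this theorem at all: it is stated as a well-known result with a citation to van der Hofstad's textbook, so there is nothing to compare against beyond noting that your approach is the classical one found in such references.
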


\subsubsection{Conditioned on survival}
\label{sec:cond:surv}

Let \( (X_{t}^{*})_{t \ge 0} \subseteq (X_{t})_{t \ge 0}\) be the subprocess of the individuals that have infinite
progeny. Thus, $\p{X_{0}^*=0}=1-s$ and $\p{X_{0}^*=1}=s>0$.  Conditioning on the event $\{X_{0}^{*} = 1\}$, i.e.,
survival of \((X_{t})_{t \ge 0}\), \( (X_{t}^{*})_{t \ge 0}\) is a branching process with offspring distribution \(\xistar\), defined by
\begin{equation}\label{JDOE}
    \p{\xi^*=k}
    = \new{\p{X_1^{*}=k \mid X_{0}^{*}=1}}=\frac{1}{s}\sum_{k' \ge k} \p{\xi=k'} \binom{k'}{k} s^k (1-s)^{k'-k} 
    = \frac{s^{k-1}}{k!} G_{\xi}^{(k)}(1-s)
    ,
    \text{for }
    k \ge 1
    ,
    \qquad
\end{equation}
\new{where $G_{\xi}^{(k)}$ denotes the $k$-th derivative of $G_{\xi}$. Indeed, provided that an individual has $k'$ children, the number of them that have infinite progeny is a binomial random variable with $k'$ independent trials and success probability $s$.}

\new{By the Taylor formula on $G'_\xi$ we obtain}
\begin{equation}\label{CQVU}
    \E{\xistar}
    =
    \sum_{k\geq 1} \frac{k s^{k-1} G_{\xi}^{(k)}(1-s)}{k!}
    =
    \sum_{k\geq 0} \frac{s^k G_{\xi}^{(k+1)}(1-s)}{k!}
    =
    G_{\xi}'(1) = \nu
    .
\end{equation}
Moreover,
\begin{equation}\label{SOWK}
\p{\xi^*=1}
=
G_{\xi}'(1-s)
= \hat\nu
.
\end{equation}
Note that if $s=1$, then  $(X_{t}^{*})_{t \ge 0} = (X_{t})_{t \ge 0}$ and all the probabilities and expected values above coincide with the ones of $\xi$.

Let \(\proctildet \subseteq \procstart\) be the subprocess of the individuals that
have \emph{exactly one} \new{children in $\procstart$}. 
(Note that \(\proctildet\) is not necessarily a connected process: \new{it produces a forest composed of paths.})
Then
conditioned on the individual \( (i,t) \) being counted in \( \proctildet\), \(\xi_{i,t}\)
is distributed as $\tilde{\xi}$, defined by
\begin{align}\label{SJWU}
    \p{\tilde \xi=k}
    = \p{X_{1}=k\mid X_{1}^*=1}
    =
    \new{\frac{k}{\hat\nu}\cdot\p{\hat\xi=k}}
    ,
    \quad \text{for }
    k\geq 1
    .
\end{align}
In particular, if $s=1$ then $\p{\tilde\xi=1}=1$.

Let $\tilde \eta = (\tilde \xi, \tilde \zeta)$ be the distribution of \(\eta_{i,t}\) conditioned on \(
(i,t) \in \proctildet\).  Define the  \emph{subcritical entropy} of $\eta$ as\new{
\begin{align}\label{EFND}
\hat{H}\coloneqq \bE[\log\tilde \zeta] = \frac{1}{\hat \nu}\sum_{k,\ell\geq 1} k \log\ell\cdot \p{(\hat \xi,\zeta)=(k,\ell)}
,
\end{align}}

When comparing~\eqref{EFND} with~\eqref{SKFN}, one observes that the summands differ by a factor  $\ell$. This is explained by the fact that we will choose $\eta=D_{\text{o-sb}}$, which already carries the additional term $\ell$; see~\eqref{UHJC}.


Later, we will also consider the \emph{inhomogeneous} branching process \(\procspinet\) in which the root has offspring
distribution \(\tilde \xi -1\) and all other individuals have offspring distribution \(\hat{\xi}\).
Note that such a process will almost surely become extinct.

\subsection{Large deviation theory}\label{sec:LDT}

We will use Cramér's theorem, a classical result in large deviation theory.
\begin{thm}[see, e.g., Corollary~2.2.19 in~\cite{dembo2010}]\label{thm:cramer}
Let $Z_1,\dots,Z_t$ be iid copies of a random variable $Z$ satisfying $\bE[e^{\lambda Z}]<\infty$ for all $\lambda\in \mathbb{R}$. Define
\begin{equation}\label{IGCL}
    \bar{Z}_t= \frac{1}{t}\sum_{r=1}^t Z_r
    .
\end{equation}
Then, for any $z \geq \bE[Z]$
\begin{equation}\label{ZLVD}
\lim_{t\to \infty}\frac{1}{t}\log \p{\bar{Z}_t\geq z}= -I(z)
,
\end{equation}
where 
\begin{equation}\label{JRPL}
    I(z)\coloneqq \sup_{\lambda\in \mathbb{R}} \{\lambda z -\log \bE[e^{\lambda Z}]\}
    ,
    \qquad
    \text{for }
    z \in \dsR
    ,
\end{equation}
is the
\emph{Fenchel-Legendre transform} of the cumulant generating function of \(Z\).
\end{thm}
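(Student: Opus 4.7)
The plan is to establish matching upper and lower bounds on $\frac{1}{t}\log \p{\bar{Z}_t \geq z}$ by the classical Cram\'er–Chernoff method. Set $M(\lambda) = \bE[e^{\lambda Z}]$ and $\Lambda(\lambda) = \log M(\lambda)$; under the hypothesis that $M(\lambda)$ is finite for every $\lambda \in \mathbb{R}$, $\Lambda$ is smooth and strictly convex on $\mathbb{R}$ with $\Lambda'(0) = \bE[Z]$, and $I(z) = \sup_{\lambda \in \mathbb{R}}\{\lambda z - \Lambda(\lambda)\}$. I would first dispose of the trivial case $z = \bE[Z]$, where $I(z) = 0$ and $\p{\bar{Z}_t \geq z} \to 1/2$ by the central limit theorem, and then focus on $z > \bE[Z]$.

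For the upper bound, apply Markov's inequality to $e^{\lambda t \bar{Z}_t}$ for $\lambda \geq 0$:
\begin{equation*}
\p{\bar{Z}_t \geq z} \;\leq\; e^{-\lambda t z}\, M(\lambda)^t,
\end{equation*}
so $\frac{1}{t}\log \p{\bar{Z}_t \geq z} \leq -(\lambda z - \Lambda(\lambda))$, and optimizing over $\lambda \geq 0$ yields the upper bound $-\sup_{\lambda \geq 0}\{\lambda z - \Lambda(\lambda)\}$. Because $\Lambda$ is convex with $\Lambda'(0) = \bE[Z] \leq z$, the unconstrained supremum defining $I(z)$ is attained at some $\lambda^{*} \geq 0$, so the constrained supremum equals $I(z)$ as required.

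For the matching lower bound, I would use an exponential change of measure (tilting). Choose $\lambda^{*} \geq 0$ with $\Lambda'(\lambda^{*}) = z$ (existence follows from strict convexity of $\Lambda$ together with the finiteness of $M$ on all of $\mathbb{R}$, up to the boundary case $z$ equal to the essential supremum of $Z$, which needs a separate truncation argument). Let $\tilde{Z}_1, \dots, \tilde{Z}_t$ be iid with tilted law $d\tilde{\bP}(x) = e^{\lambda^{*} x - \Lambda(\lambda^{*})}\, d\bP(x)$; under $\tilde{\bP}$, each $\tilde{Z}_i$ has mean $z$ and finite variance $\Lambda''(\lambda^{*})$. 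Restricting the inversion of the Radon–Nikodym derivative to the event $\{\bar{Z}_t \in [z, z+\varepsilon]\}$ gives
\begin{equation*}
\p{\bar{Z}_t \geq z} \;\geq\; e^{-t(\lambda^{*}(z+\varepsilon) - \Lambda(\lambda^{*}))}\, \tilde{\bP}\bigl\{\bar{\tilde{Z}}_t \in [z, z+\varepsilon]\bigr\},
\end{equation*}
and the tilted probability tends to a positive constant by the CLT applied under $\tilde{\bP}$. Taking $\tfrac{1}{t}\log$, letting $t \to \infty$ and then $\varepsilon \to 0$, I obtain $\liminf \tfrac{1}{t}\log \p{\bar{Z}_t \geq z} \geq -(\lambda^{*} z - \Lambda(\lambda^{*})) = -I(z)$. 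The main obstacle is the selection and existence of the tilting parameter $\lambda^{*}$ together with the boundary behaviour of $z$ at the essential supremum of $Z$; the hypothesis that $M$ is finite on all of $\mathbb{R}$ keeps $\Lambda$ steep and well-behaved and substantially simplifies both issues, which is precisely why the cleanest textbook statement of Cram\'er's theorem adopts this assumption.
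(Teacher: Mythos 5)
The paper gives no proof of this statement: it is quoted as a textbook result and attributed to Corollary~2.2.19 of Dembo and Zeitouni. Your proof is the standard Cram\'er--Chernoff upper bound combined with exponential tilting for the lower bound, which is precisely the argument used in that reference, so there is nothing in the paper to compare it against and the approach is the canonical one. Two small points to tidy: (i) strict convexity of $\Lambda$ and the CLT claim $\p{\bar Z_t \geq \bE[Z]} \to 1/2$ both require $Z$ to be non-degenerate; in the degenerate case $\Lambda$ is affine, but then $\frac{1}{t}\log\p{\bar Z_t\geq z}$ equals $0$ for $z\leq\bE[Z]$ and $-\infty$ for $z>\bE[Z]$, matching $-I(z)$, so the statement is trivially true there, and in any case all you need at $z=\bE[Z]$ is $\frac{1}{t}\log\p{\bar Z_t\geq z}\to 0$, not the constant $1/2$. (ii) You correctly flag the boundary case where $z$ equals the essential supremum of $Z$ (there $\lambda^{*}$ may not exist and one computes $\p{\bar Z_t\geq z}=\p{Z=z}^{t}$ directly, which matches $I(z)=-\log\p{Z=z}$); filling that in is the only real gap, and everything else is sound.
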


In the applications of Cram\'er's theorem, we will have $t$ large, albeit finite. 
Thus, it will be convenient \new{to avoid the limit statement in~\eqref{ZLVD}, and use results with an explicit error term} which we obtain by inspecting the proof of the theorem. By Markov's inequality, for any $z\geq \E{Z}$
\begin{align}\label{WOEW}
\p{\bar{Z}_t\geq z} \leq \inf_{\lambda\in \mathbb{R}} e^{-\lambda z t } \E{e^{\lambda Z}}^t = e^{-I(z)t}.
\end{align}
For the lower bound we use the \emph{exponential tilting} trick. By hypothesis, $Z$ has finite variance.  Fix $z\geq \E{Z}$. 
\new{The supremum in~\eqref{JRPL} is attained~\cite[Lemma~2.2.5]{dembo2010}}, so let $\lambda_0=\lambda_0(z)>0$ be the value of $\lambda$ that \new{maximises} $ \lambda z -\log \bE[e^{\lambda Z}]$. If $\mu$ is the law of $Z$, let $Y$ be the random variable defined by
\begin{align}\label{SPEP}
\p{Y\leq y}=\frac{1}{\bE[e^{\lambda_0 Z}]} \int_{-\infty}^y e^{\lambda_0 t}\,d \mu(t),
\end{align}
which satisfies $\E{Y}=z$ and has finite variance $\sigma_Y^2$. Then, for any $\delta>0$ and $t\geq 2\sigma^2_Y/\delta^2$, we have
\begin{align}\label{EPWQ}
\p{\bar{Z_t}\geq z-\delta}
&\geq \p{\bar{Z_t}\in [z-\delta ,z+\delta]}
\geq e^{-(I(z)+\delta)t}\p{Y\in [z-\delta ,z+\delta]}
\geq \frac{1}{2}e^{-(I(z)+\delta)t},
\end{align}
where the last step uses Chebyshev's inequality.


From now on we will set $Z$ to be the discrete random variable with distribution \(\log\tilde \zeta\); recall that $\E{Z}=\hat H$. By~\autoref{cond:BP}, it has support in \(\{\log 2, \log 3,\dots, \log M\}$, where \(M\) is a fixed integer, and finite moment generating function.  Thus, $I(z)$ will refer to the large deviation rate function of $Z$. \new{In particular, \(I(z)\) is continuous, 
    non-decreasing for $z\geq \hat H$, non-increasing for \(z\leq \hat H\), and $I(\hat{H})=0$.}
    The proofs of these properties follow along the lines of those
     in~\cite[Lemma~2.2.5]{dembo2010}, and we omit them here.

\subsection{Subcritical growth: a lower bound}\label{sec:LB}

\new{In this section we show that, with some exponentially large probability, the marked branching process satisfies some desirable conditions, which will be later used to show the existence of vertices in the random graph with small stationary value.}
\begin{thm}
    \label{thm:LB}
    Let \(\procall\) be a marked branching process with distribution \(\eta=(\xi,\zeta)\)
    satisfying~\autoref{cond:BP} \new{with $M\in \mathbb{N}$}. Then there exists $c=c(M)$ such that for any $a \in [1, \log(M)/\hat{H}]$, $\delta>0$, $t\in \mathbb{N}$ sufficiently large with respect to $\delta$ and $M$, and $\omega\geq t$,
    \begin{equation}\label{DMEX}
        \p{\left\{0 < \Gamma_t< e^{-(a \hat{H}-\delta) t}\right\}\cap \bigcap_{r=1}^t \{0<X_r<\omega\}}
        \geq
        c\new{\hnu} \exp \left\{-\left(|\log{\hnu}| + I(a\hat{H})+\delta \right)t\right\},
    \end{equation}
    \new{where $\Gamma_t$ is as defined in~\eqref{OSKE}.}
\end{thm}

The event inside the probability in the LHS of~\eqref{DMEX} can be split into two natural parts. The first one, bounding $\Gamma_t$, implies that \new{\emph{all}} leaf-to-root paths in the tree have \new{many} marks that are anomalously large, which amounts to the contribution $I(a\hat{H})$ \new{in the RHS of~\eqref{DMEX}} and relates to property (2) in the discussion below~\autoref{thm:main}. The second one, regarding $X_r$, enforces the tree to survive \new{up to generation $t$} but also to stay anomalously small for its height, implying that there are less than $\omega$ leaf-to-root paths of length $t$, which amounts to the contribution $|\log{\hnu}|$ \new{in the RHS of~\eqref{DMEX}} and relates to property (1). \new{It is worth stressing that in coming applications $\omega$ will be chosen polynomial on $t$.}


\begin{proof}
For the sake of simplicity, we first prove the theorem assuming that \(1\) is in the support of
\(\xi\). The modifications needed otherwise, are detailed at the end of the proof.

Consider the events 
\begin{equation}\label{eq:SOEV}
E_1=\{X_{\new{t+1}}^*=1\},\, E_2=\{X_{\new{t+1}}=1\},\, \text{ and } \,E_3=\bigcap_{r=1}^t \{0<X_r<\omega\}.
\end{equation}
The idea of the proof is to lower bound the probability \new{of the event in the LHS of~\eqref{DMEX} conditioned on the intersection of these
events}.

When the event \(E_1\) happens, \new{then $X_r^*=1$ for $r\in \{0,1,\dots, \new{t+1}\}$} and we call the first $\new{t+1}$ generations of
$\procstar$ the \emph{spine}. We may assume without loss of generality that the spine of
individuals \new{with infinite progeny} corresponds to the first individual in each generation, since reordering cousin indices
does not change the value of \(\Gamma_{t}\) or $X_r$.  Moreover, the number of children (in
\(\procall\)) and the mark of each individual in the spine is jointly distributed as $\tilde \eta =
(\tilde \xi, \tilde \zeta)$, \new{defined as in~\autoref{sec:cond:surv}}.  

For \(\mathbf{l}= (\ell_{1},\dots,\ell_{\new{t+1}})\in \cL \coloneqq \{1\} \times [M]^{t}\), let
\(F(\mathbf{l})\) be the intersection of the event $E_1$ and the event \(\{\xi_{1,0} = \ell_{\new{t+1}},
\dots, \xi_{1,t} = \ell_{1}\}\), i.e., the \(r\)-th generation of the spine has \(\ell_{\new{t+1}-r}\) children.  (We
require \(\ell_{1}=1\), which by assumption is in the support of \(\xi\), so that \(\new{\cup_{\mathbf{l}\in \cL}}F(\mathbf{l}) \cap E_{2}\) is not
empty.) Conditioned on the event \(F(\mathbf{l})\), the branching process
\(\procall\) is \new{almost surely} equivalent to the following construction: First start with a path
of length \(\new{t+1}\) which is identified as the spine.  Then, for every \(r \in \{0,\dots,\new{t}\}\), attach an independent copy of the branching process \(\procspine\)
(defined \new{at the end} of~\autoref{sec:cond:surv}) to the individual at generation  \(r\) of the spine, conditioned on its root having \(\ell_{\new{t+1}-r}-1\) children. \new{Finally, attach an independent copy of the branching process \((X_j)_{j\geq 0}\) to the individual at generation $t+1$ of the spine, conditioned on survival}.

Conditioned on $E_1$, let us write
\begin{equation}\label{PUUA}
    \Gamma_t = \Gamma_{1,t}+ \sum_{i=2}^{X_t} \Gamma_{i,t}\eqqcolon\Gamma_t^* + \Gamma_t^0,
\end{equation}
\new{where $\Gamma_t^*$ represents the contribution of the spine to the total weight and $\Gamma_t^0$ can be seen as a spurious contribution, which we will disregard for the purposes of this lemma.}

As $E_2$ implies $\{\Gamma_t^0=0\}$, the probability \new{in the LHS of~\eqref{DMEX}} is at least
\begin{equation}\label{WKDM}
\begin{aligned}
&\p{
\{0<\Gamma^*_t< e^{-(a \hat{H}-\delta) t}\}\cap E_1\cap E_2\cap E_3}
\\
&\hspace{2cm}\geq \p{E_1}\p{\Gamma^*_t< e^{-(a \hat{H}-\delta) t}\mid E_1} \min_{\mathbf{l}\in \cL} \p{E_2\cap E_3\mid
F(\mathbf{l})} ,
\end{aligned}
\end{equation}
where we used that $E_2\cap E_3$ and $\{\Gamma^*_t< e^{-(a \hat{H}-\delta) t}\}$ are \new{conditionally} independent \new{given $F(\mathbf{l})$}.

Let us first bound the probability of $E_1$. By~\eqref{SOWK}, one has 
\begin{equation}\label{FASF}
\begin{aligned}
\p{E_1}
&= \new{\p{\cap_{r=0}^{t+1}\{X_{r}^{*}= 1\} }}\\
&=\p{X_{0}^{*}= 1}
\prod_{r = 1}^{\new{t+1}}
\p{X_{r}^{*}=1 \mid X_{r-1}^{*}=1}	\\
&=
\p{X_{0}^*=1}\p{\xi^*=1}^{\new{t+1}}\\
&=
s \hnu^{\new{t+1}}
,
\end{aligned}
\end{equation}
\new{where we used that $X_{r}^{*}$ and $(X_{j}^{*})_{0\leq j\leq r-2}$ are conditionally independent given $X_{r-1}^{*}$.} Recall that $s$ is bounded away from $0$ \new{by a constant only depending on $M$}, by~\autoref{lem:bounded_away}.

We now bound \new{the probability $\Gamma^*_t$ is small} conditioned on $E_1$. Let \(\zeta_{r}\new{\coloneqq \zeta_{1,r}}\) be the mark of the $r$-th generation individual in the spine. \new{Conditioned on $E_1$, $(\zeta_r)_{1\leq r\leq t}$ is a sequence of iid copies of $\tilde \zeta$, which we denote by  $(\tilde\zeta_r)_{1\leq r\leq t}$}. Letting $Z_r=\log \tilde{\zeta}_r$, we have
\begin{equation}\label{PJVW}
\Gamma^*_t =\prod_{r=1}^{t} (\tilde\zeta_{r})^{-1} = e^{-\sum_{r=1}^{t} Z_r}
.
\end{equation}
As $t\to \infty$, it follows from~\eqref{EPWQ} that
\begin{align}\label{PDIE}
    \p{\Gamma^*_t< e^{-(a\hat{H}-\delta) t}\mid E_1}\geq \frac{1}{2} e^{-(I(a\hat{H})+\delta) t}
    .
\end{align}
We finally obtain a bound on the probability of $E_2\cap E_3$ conditioned on $F(\mathbf{l})$, uniformly over $\mathbf{l}\in \cL$.  \new{Recall the construction of the branching process from the spine.} Since all the branching processes growing from the spine are mutually independent, we have
\begin{align}\label{FKFO}
    \p{E_2 \mid F(\mathbf{l})}
&= \prod_{r=0}^{\new{t}} \p{\hat X_{\new{t+1}-r}=0 \mid \hat X_{1}=\ell_{\new{t+1}-r}-1}
= \prod_{r= 2}^{\new{t+1}} \p{\hat X_{r}=0 \mid \hat X_1=\ell_r-1} 
,
\end{align}
where the last step uses that \(\ell_{1}=1\).
For \(r \ge 2\), using that \(\ell_{r} \le M\), we have
\begin{equation}\label{FEOJ}
\p{\hat X_{r}=0 \mid \hat X_1= \ell_r-1} \geq \p{\hat X_{2}=0 \mid \hat X_1= M} =  \p{\xihat = 0}^M.
\end{equation}

Also, by Markov inequality, there exists a constant \(r_{0}\) \new{only depending on $M$ (see \autoref{lem:bounded_away})} such that for all $r\geq r_{0}$
\begin{equation}\label{PUDH}
    \p{\hat X_{r} \ge 1 \mid \hat X_1= \ell_r-1}
    \leq \E{\hat X_{r} \mid \hat X_1= M}
    = M \hnu^{r-1} 
    \le 1/2
    .
\end{equation}
It follows that
\begin{equation}\label{FKWO}
    \p{E_2\mid F(\mathbf{l})}
    \geq 
    \p{\xihat = 0}^{r_{0}M} \prod_{r > r_{0}} (1-M\hnu^{r-1})
    >
    c_{0}
    ,
\end{equation}
for some constant $c_{0}>0$ that only depends on $M$ (see~\autoref{lem:bounded_away}).  

To bound the probability of $E_3$, we use the same argument as in~\cite[Theorem 3.4]{cai2020a}. Note that $X_r>0$ is already implied by $E_1$, it suffices to bound the probability $X_r$ is not too large. By linearity of the expectation,
\begin{align*}
    \E{X_r\mid F(\mathbf{l})} = 1+ \sum_{j=1}^r (\ell_{\new{t+1}-r+j}-1) \hnu^{j-1}\leq c_1,
\end{align*}
for some $c_1$ only depending on $M$ (see~\autoref{lem:bounded_away}).
By independence of the branching processes growing from the spine and the moment formula in~\cite[pp. 4]{athreya1972} for $\hnu\in (0,1)$,
\begin{align*}
\V{X_r\mid F(\mathbf{l})} 
\le
\sum _{j=1}^{r} (\ell_{\new{t+1}-r+j}-1)\frac{\Vv{\hat{\xi}} \hat{\nu }^{j-2} \left(\hat{\nu }^{j-1}-1\right) }{\hat{\nu }-1}
\leq c_2,
\end{align*}
for some $c_2$ only depending on $M$ (see~\autoref{lem:bounded_away}).
Thus, we have \(\ee{X_{r}^{2}\mid F(\mathbf{l})} \leq c_2+c_1^2\) and
it follows from Chebyshev's inequality that
\begin{equation}\label{eq:X:r:up}
    \p{E_{3}^c \mid F(\mathbf{l})}
    \le
    \sum_{r=1}^{t}
    \p{X_{r} \ge \omega \mid F(\mathbf{l})}
    \leq
        \sum_{r=1}^{t} 
        \frac{\E{X_{r}^{2} \mid F(\mathbf{l})}}{\omega^{2}}
    \leq \frac{(c_2+c_1^2)t}{\omega^2}
    \leq \frac{c_0}{2},
\end{equation}
since $\omega\geq t$ and $t$ is large enough with respect to $M$.

From~\eqref{FKWO} and~\eqref{eq:X:r:up}, we obtain
\begin{equation}\label{SEND}
\p{E_2\cap E_3\mid F(\mathbf{l})} \geq \p{E_2\mid F(\mathbf{l})}- \p{E_3^c\mid F(\mathbf{l})} \geq  c_{0}/2.
\end{equation}
The desired bound follows from plugging~\eqref{FASF},~\eqref{PDIE} and~\eqref{SEND} into~\eqref{WKDM}, and noting that $s$ and $c_0$ are bounded away from $0$ by~\autoref{lem:bounded_away}. 

If the minimal positive support of \(\xi\)  is \(k_0\geq 2\), then the only change needed
is to let \(E_{2} = \{X_{\new{t+1}} = k_0\}\). The extra \(k_0 - 1\) individuals in generation \(t\)
contribute at most \(k_0 M \Gamma_{1,t} \) to \(\Gamma_{t}\). Thus the same argument still
works.
\end{proof}

\subsection{Subcritical growth: an upper bound}\label{sec:UB}

\new{In this section we show that the probability that the marked branching process satisfies some undesirable conditions is exponentially small, which will be later used to give a lower bound on the stationary distribution value for all vertices in the random graph.}

Given a fixed $\gamma > 0$, define
\begin{equation}\label{SPQW}
\cB_t(\gamma)\coloneqq \cap_{i\in [X_{t}]} \{\Gamma_{i,t} \geq \gamma\}.
\end{equation}
In this section we will prove the following theorem:
\begin{thm}
    \label{thm:UB}
    Let \(\procall\) be a marked branching process with distribution \(\eta=(\xi,\zeta)\)
     satisfying~\autoref{cond:BP} \new{with $M\in \mathbb{N}$}.
    Then there exists $C=C(M)$ such that for any $a\geq 1$, $\delta>0$, $t\in \mathbb{N}$ sufficiently large with respect to $\delta$ and $M$, and $\omega\in (t^2,e^{\sqrt{t}})$,
    \begin{equation}\label{QKPL}
        \p{(\cB_t(e^{-(a \hat{H}+\delta) t}))^c\cap \{0<X_t<\omega\}}\leq  \omega^C
        \exp\left\{-\left(|\log{\hnu}| + I(a \hat{H}) -\delta\right) t\right\}
        .
    \end{equation}
\end{thm}

\new{This theorem is a counterpart of  \autoref{thm:LB}. As in there, the event inside the probability in the LHS of~\eqref{QKPL} can be split into two parts. The main difference is that here we think about them as undesirable events we would like to avoid. The first one, is the complement of $\cB_t(\gamma)$, which is the event of \emph{all} leaf-to-root paths having a heavy weight. The second one is exactly as in \autoref{thm:LB}. Analogously, these two events can be related to the RHS of~\eqref{QKPL} and to the properties (2) and (1) respectively in the discussion below \autoref{thm:main}.}

\subsubsection{An inhomogeneous branching process}

Fix $t\in \mathbb{N}$ and let $\proctrunc{t} \subseteq \procall$ be the finite subprocess containing individuals
in the first \(t\) generations that have some progeny in generation $t$. Note that $X_t^{(t)}=X_t$. Similar to $(X_r^*)_{r\geq 0}$,
\(\proctrunc{t}\) is non-decreasing in \(r\). Conditioned on the event $\{X_t>0\}$, \(\proctrunc{t}\) can
be seen as an inhomogeneous branching process where the offspring distribution of the individuals in
generation $r=t-a$ is $\xi^{(a)}$, defined by
\begin{equation}\label{KWDA}
    \p{\xi^{(a)}=k}
= \frac{1}{s_a} \sum_{k'\geq k}\p{\xi=k'} \binom{k'}{k} s_{a-1}^k (1-s_{a-1})^{k'-k}
= \frac{(s_{a-1})^{k}}{s_a k!} G_{\xi}^{(k)}(1-s_{a-1}), 
\qquad \text{for } k \ge 1\,,
\end{equation}
where $s_a\coloneqq\p{X_a>0}$. \new{Indeed, provided that the root has $k'$ children, the number of them that have progeny at generation $a$ is a binomial random variable with $k'$ independent trials and success probability $s_{a-1}$.}

Note the similarity between $\xi^{(a)}$ and $\xi^*$ which is defined in~\eqref{JDOE}. We have $s_a=s + O(\hnu^a)$ (see~\cite[Eq. (3.6)]{cai2020a}), where the asymptotics notation is as $a\to \infty$. Using the Taylor expansion of $G_{\xi}^{(k)}$
around $1-s$, we get
\begin{equation}\label{ODNE}
    \p{\xi^{(a)}=k
    } = \frac{s^{k-1}}{k!} G_{\xi}^{(k)}(1-s)+ O(\hnu^a)
    = \p{\xistar=k}+O(\hnu^a),
    \qquad \text{for } a\geq 0,  k \geq  1.
\end{equation}
In particular, by~\eqref{SOWK},
\begin{equation}\label{KSNW}
    \p{\xi^{(a)}=1}
=
\p{\xistar=1}+O(\hnu^a)
=
\hnu+O(\hnu^a).
\end{equation}
Since  $\xi \leq M$ by \autoref{cond:BP}, it follows from~\eqref{ODNE} that
\begin{equation}\label{SPEJ}
    \E{\xi^{(a)}}
    = \E{\xistar} +O(\hnu^a)
    = \nu+O(\hnu^a)
    .
\end{equation}

\subsubsection{Control the surviving process}

Denote by $\pt{\cdot}$ the probability conditioned on survival at time $t$, i.e. 
\begin{align}\label{eq:Pt}
\pt{\cdot}\coloneqq \p{\cdot\mid X_t>0}.
\end{align}
The argument for the following lemma is similar to that of Theorem~3.4 in our previous work~\cite{cai2020a}.  We give a proof for completeness.

\begin{lemma}\label{lem:DSKD}
    Let \(t\) and \(\omega\) be as in~\autoref{thm:UB}.
    Set $t_{0}\coloneqq \left(\frac{1}{|\log \hnu|}+\frac{1}{\log \nu}\right)\log \omega$. 
Then there exists a constant \(C_{0}\) depending only on $M$ such that for all $0\leq r\leq t-t_0$
\begin{align}\label{OEIW}
    \mathbb{P}_{t_0+r}(X_r^{(t_0+r)}<\omega) \leq C_{0} \hnu^{r-t_{0}} 
    .
\end{align}
In particular,
\begin{align}\label{LSNE}
    \p{0<X_t<\omega} \leq C_{0} \hnu^{t-2t_{0}} 
    .
\end{align}
\end{lemma}

\new{The bound on the RHS of~\eqref{LSNE} can be understood, up to error terms, as follows: at each generation, the price to pay for keeping the branching process alive but small is $\hnu=\p{\xi^*=1}$. This indicates that, the most likely way to build a narrow branching process is essentially to only have one single path that survives up to time $t$, plus some other parts that get extinguished before generation $t$.}

\begin{proof}
Recall that, for any $0\leq r\leq t-t_0$, conditioned on survival at time $r+t_0$, $(X_j^{(t_0+r)})_{t_0+r\ge j \ge 0}$ is an inhomogeneous branching process
where the $j$-th generation has offspring distribution $\xi^{(t_0+r-j)}$, defined as in~\eqref{KWDA}. Recall that $\E{\xi^*}=\nu>1$. By~\eqref{ODNE} and the choice of $t_{0}$, for any $0\leq j\leq r$, we have 
\begin{equation}\label{TKIN}
    \p{\xi^{(t_0+r-j)}=k
    }
    = 
    \p{\xistar=k}+O(\hnu^{t_{0}})
    = 
    \p{\xistar=k}+O(\omega^{-1})
    ,
    \qquad \text{for } k \geq  1,
\end{equation}
and by~\eqref{SPEJ}
\begin{equation}\label{TZBJ}
    \E{\xi^{(t_0+r-j)}}
    =
    \nu(1+O(\omega^{-1}))
    .
\end{equation} 
Choose $\varepsilon>0$ sufficiently small with respect to $M$, so $((1-\varepsilon)\nu)^{t_0}\geq 2\omega$; this is possible as $\hnu$ is bounded away from $1$ by~\autoref{lem:bounded_away}.
Let $\xi^\da$ be a \new{probability} distribution such that  $\xi^\da$ is stochastically dominated by each $\xi^{(t_0+r\new{-j})}$ for $0\leq \new{j \leq  r}$, and $\nu^\da\coloneqq \E{\xi^\da}\geq \nu(1-\varepsilon)>1$. Indeed such offspring distribution exists by~\eqref{TZBJ}.
 Let $(X^\da_j)_{j \ge 0}$ be a branching process with offspring distribution $\xi^\da$. The processes
$(X_j^{(t_0+r)})_{r \ge j \ge 0}$ and $(X^\da_j)_{r\ge j \ge 0}$ can be coupled so $X^{(t_0+r)}_j\geq X^\da_j$
almost surely for every $0\leq j\leq r$.

Let $a_r\coloneqq \mathbb{P}_{t_0+r}(X_{r}^{(t_0+r)}<\omega)$ \new{be the probability we would like to bound from above}. For $r=t_0$, there exists $c_0>0$ \new{only depending on $M$} such that
\begin{equation}\label{RWPB}
    \begin{aligned}
        1-a_{t_{0}}
        & 
        =\mathbb{P}_{2t_{0}}(X_{t_{0}}^{(2t_0)}\geq \omega)
        \geq \p{X_{t_{0}}^\da\geq \omega} 
        \geq  \p{X_{t_{0}}^\da \geq \frac{1}{2}(\nu^\da)^{t_{0}}} 
      \geq c_0
        ,
    \end{aligned}
\end{equation}
\new{where we used our choice of $\varepsilon>0$.}
For the last inequality we used the following fact that can be easily proved\footnote{
For a proof, let $C = \frac{\sigma^2}{\nu(\nu-1)}+1$. By Cauchy-Schwartz inequality, we deduce
$$
(1-c)^2 \E{Z_t}^2 \leq \E{Z_t\mathbb{I}(Z_t\geq c\,\E{Z_t})}^2 \leq \E{Z_t^2}\p{Z_t\geq c\,\E{Z_t}},
$$
and since $\E{Z_t^2}\leq C\E{Z_t}^2$, the inequality holds.
}: there exists $C>0$ such that for all $c\in (0,1)$ and supercritical branching process $(Z_t)_{t\geq 0}$ with offspring having mean $\nu\new{>1}$ and finite variance $\sigma^2$, we have 
\begin{align}\label{NRIW}
\p{Z_t\geq c\,\E{Z_t}}\geq \frac{(1-c)^2}{C}.
\end{align}

\new{Let us now bound $a_r$ for $r>t_0$.} Using~\eqref{TKIN}, we have the
simple recursive inequality:
\begin{equation}\label{KXXJ}
    \begin{aligned}
        a_r 
        & 
        \leq \p{\xi^{(t_0+r)}=1}a_{r-1}+ \left(1-\p{\xi^{(t_0+r)}=1}\right)a_{r-1}^2 
        \\
        & 
        \new{= \hnu a_{r-1}+ (1-\hnu)a_{r-1}^2+ O(\omega^{-1} a_{r-1})}
        .
    \end{aligned}
\end{equation}
\new{The first inequality is justified as follows: If the root has one child, then the desired event has probability exactly $a_{r-1}$. Otherwise, the root has at least two children and the corresponding subprocesses rooted at them must each have size less than $\omega$ at generation $r-1$. (In fact, their sum must be less than $\omega$.)}

This recursion has exactly the same form as~\cite[Eq. (2.4)]{riordan2010} and can be solved in the same
way to show that there exists a constant \(C_{0}\) \new{only depending on $M$} such that for all $0\leq r\leq t-t_0$
\begin{align}\label{LSNE1}
    a_r    
    \leq C_{0} \hnu^{r-t_{0}} 
    ,
\end{align}
proving~\eqref{OEIW}.

The second statement easily follows from the first one. Since $X_r^{(t)}$ is increasing for $0\leq r\leq t$ and using~\eqref{LSNE1} with $r=t-t_0$
\begin{align}\label{LSNE12}
    \p{0<X_t<\omega}=
    \pt{X_t^{(t)}<\omega}\p{X_t>0}\leq \pt{X_{t-t_0}^{(t)}<\omega}=a_{t-t_0}\leq C_{0} \hnu^{t-2t_{0}} 
    .
\end{align}
\end{proof}

\new{As previously discussed, \autoref{lem:DSKD} hints that the most likely way to keep a branching process narrow is to have a single path surviving up to time $t$. In the next lemma we will see the cost of having a more complex structure that survives until  generation $t$.}

Let $x$ be an individual at generation $t$ of $\procall$. 
Let $y_{0},y_1,\dots, y_t=x$ be the path connecting the root $y_{0}$ to $x$, which we refer to as
the \emph{spine associated to $x$}. \new{By permuting the cousin index of individuals, we may assume that $y_r$ is the first individual of generation $r$, for all $0\leq r\leq t$}. An index $r\in\{0,\dots, t-1\}$ is a \emph{ramification}\footnote{The word ``ramification'' means ``a complex or unwelcome
consequence of an action or event.''} of the spine, if $y_r$ has offspring at least $2$ in
$\proctrunc{t}$. Let $R(x)$ be the number of ramifications of the spine associated to $x$.
\new{One can decompose the set of individuals in each generation of $\proctrunc{t}$
according to their first common ancestor with $x$, i.e., the first of their ancestors that belongs to the
spine \(y_{0},\dots,y_{t}=x\), we call this the \emph{spine decomposition} of $\proctrunc{t}$.} See~\autoref{fig:spine}.

\begin{figure}[ht]
\centering
			\begin{tikzpicture}[scale=0.6]

			\node[circle,draw,fill=black] (V0)	at (18,0) {};	
			\node[circle,draw,fill=black] (V1)	at (15,0) {};	
			\node[circle,draw,fill=black] (V2)	at (12,0) {};	
			\node[circle,draw,fill=black] (V3)	at (9,0) {};	
			\node[circle,draw,fill=black] (V4)	at (6,0) {};	
			\node[circle,draw,fill=black] (V5)	at (3,0) {};	
			\node[circle,draw,fill=black] (V6)	at (0,0) {};	
			
			\draw[] (V1) --  (V0);
			\draw[] (V2) --  (V1);
			\draw[] (V3) --  (V2);
			\draw[] (V4) --  (V3);
			\draw[] (V5) --  (V4);
			\draw[] (V6) --  (V5);
			
			\node[circle,fill=lightgray] (V11) at (15,4) {};	
			\draw[lightgray] plot [smooth, tension=1] coordinates { (15.2,4) (16.5,3)  (17.9,0.2)};;
			
			\node[circle,fill=lightgray] (V21) at (12,4) {};	
			\node[circle,draw,very thick,fill=blue] (V22) at (12,3) {};	
			\node[circle,fill=lightgray] (V23) at (12,-4) {};	
			
			\draw[lightgray] (V21) --  (V11);
			\draw[] plot [smooth, tension=1] coordinates { (12.2,3) (13.5,2.3)  (14.9,0.2)};;
			\draw[lightgray] plot [smooth, tension=1] coordinates { (12.2,-4) (13.5,-3)  (14.9,-0.2)};;

			\node[circle,draw,very thick,fill=blue] (V31)	at (9,3) {};	
			\node[circle,draw,very thick,fill=green] (V32)	at (9,-3) {};	
			\node[circle,fill=lightgray] (V33)	at (9,-4) {};	
			\node[circle,fill=lightgray] (V34)	at (9,-5) {};	
			
			\draw[] (V31) --  (V22);
			\draw[lightgray] (V33) --  (V23);
			\draw[] plot [smooth, tension=1] coordinates { (9.2,-3) (10.5,-2.3)  (11.9,-0.2)};
			\draw[lightgray] plot [smooth, tension=1] coordinates { (9.2,-5) (11,-4.75)  (11.9,-4.2)};
			
			\node[circle,draw, very thick,fill=blue] (V41)	at (6,3) {};
			\node[circle,fill=lightgray] (V42)	at (6,1) {};		
			\node[circle,fill=lightgray] (V43)	at (6,-2) {};		
			\node[circle,draw,very thick,fill=green] (V44)	at (6,-3) {};
			\node[circle,fill=lightgray] (V45)	at (6,-5) {};
			
			\draw[] (V41) --  (V31);
			\draw[] (V44) --  (V32);
			\draw[lightgray] (V45) --  (V34);
			\draw[lightgray] plot [smooth, tension=1] coordinates { (6.2,1) (8,0.75)  (8.9,0.2)};
			\draw[lightgray] plot [smooth, tension=1] coordinates { (6.2,-2) (8,-2.25)  (8.9,-2.8)};

			\node[circle,fill=lightgray] (V51)	at (3,4) {};		
			\node[circle,draw,very thick,fill=blue] (V52)	at (3,3) {};	
			\node[circle,draw,very thick,fill=blue] (V53)	at (3,2) {};	
			\node[circle,fill=lightgray] (V54)	at (3,-1) {};		
			\node[circle,draw,very thick,fill=green] (V55)	at (3,-3) {};	
			\node[circle,fill=lightgray] (V56)	at (3,-5) {};	
			
			\draw[] (V52) --  (V41);
			\draw[] (V55) --  (V44);
			\draw[lightgray] (V56) --  (V45);
			\draw[lightgray] plot [smooth, tension=1] coordinates { (3.2,-1) (5,-0.75)  (5.9,-0.2)};
			\draw[] plot [smooth, tension=1] coordinates { (3.2,2) (5,2.25)  (5.9,2.8)};
			
			\draw[lightgray] plot [smooth, tension=1] coordinates { (3.2,4) (5,3.75)  (5.9,3.2)};

			\node[circle,draw,very thick,fill=blue] (V61)	at (0,3) {};	
			\node[circle,draw,very thick,fill=blue] (V62)	at (0,2) {};	
			\node[circle,draw,very thick,fill=red] (V63)	at (0,1) {};	
			\node[circle,draw,very thick,fill=red] (V631)	at (0,-1) {};	
			\node[circle,draw,very thick,fill=green] (V64)	at (0,-2) {};	
			\node[circle,draw,very thick,fill=green] (V65)	at (0,-3) {};	
			\node[circle,draw,very thick,fill=green] (V66)	at (0,-4) {};				
			\draw[] (V61) --  (V52);
			\draw[] (V62) --  (V53);
			\draw[] (V61) --  (V52);
			\draw[] (V65) --  (V55);
			\draw[] plot [smooth, tension=1] coordinates { (0.2,-4) (2,-3.75)  (2.9,-3.2)};
			\draw[] plot [smooth, tension=1] coordinates { (0.2,-2) (2,-2.25)  (2.9,-2.8)};
			\draw[] plot [smooth, tension=1] coordinates { (0.2,1) (2,.75)  (2.9,0.2)};
			\draw[] plot [smooth, tension=1] coordinates { (0.2,-1) (2,-.75)  (2.9,-0.2)};
			
			\node at (-0.7,0) {$x$};

			\node at (19,0) {$y_0$};
		
			\end{tikzpicture}
\caption{Instance of a branching process \new{$(X_r)_{t\geq r\geq 0}$ with $t=6$}. Black individuals form the spine of $x$. Grey individuals correspond to the process $(X_r-X_r^{(t)})_{t\geq r\geq 0}$ (that is, individuals with no \new{progeny} in the $6$-th generation. There are three ramifications, i.e. $R(x)=3$, namely at indices $1$, $2$ and $5$. The colors indicate the spine decomposition of  $\proctrunc{t}$.}\label{fig:spine}
\end{figure}

Next result refines~\eqref{LSNE} by taking into consideration the number of ramifications, as it is unlikely to have many of them.
\begin{lemma}\label{lem:LWIM}
    Let \(\omega,t, t_0\) be as in~\autoref{lem:DSKD}. 
    Then there exists a constant \(C_{1} > 0\) only depending on $M$ such that, for every individual $x$ at generation $t$ of $\procall$, we have
    \begin{equation}\label{JFMK}
        \p{0<X_t<\omega, R(x)\geq \ell} \leq 
        \hnu^{t+(t_{0}-C_{1})(\ell-\new{3t_0})}
        , 
        \quad
        \text{for } \new{3t_0} \leq \ell \le t
        .
    \end{equation}
\end{lemma}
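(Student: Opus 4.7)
My approach is to decompose the event $[X_{h}^{(t)}<\omega,\,R(x)\geq\ell]$ according to the set $S\subseteq\{0,\dots,h-1\}$ of positions of ramifications along the spine $y_{0},\dots,y_{h}=x$ in $\proctrunc{t}$. At each $r\in S$, the spine ancestor $y_{r}$ has at least one non-spine surviving child, which roots an independent subtree of $\proctrunc{t}$ distributed as an inhomogeneous branching process conditioned on surviving up to time $t$. Because $[X_{h}^{(t)}<\omega]$ forces every such subtree to contain fewer than $\omega$ individuals at generation $h$, applying \autoref{lem:DSKD} to each subtree (after shifting its root to generation $r+1$, so that there are $h-r-1$ generations before generation $h$) bounds this conditional probability by $C_{0}\hat{\nu}^{(h-r-1-t_{0})^{+}}$, and by the branching property these subtree smallness events are conditionally independent given the spine structure.

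Combining with the probability of the ramification pattern, which by~\eqref{KSNW} contributes factors $\hat{\nu}+O(\hat{\nu}^{t-r})$ at non-ramifications and $1-\hat{\nu}+O(\hat{\nu}^{t-r})$ at ramifications, and summing over $|S|\geq\ell$, I expect a bound of the form
\begin{equation*}
\pt{X_{h}^{(t)}<\omega,\,R(x)\geq\ell}\;\leq\;\hat{\nu}^{h}\sum_{k\geq\ell}\left(\frac{C_{0}(1-\hat{\nu})}{\hat{\nu}}\right)^{k}\sum_{|S|=k}\prod_{r\in S}\hat{\nu}^{(h-r-1-t_{0})^{+}},
\end{equation*}
where the leading $\hat{\nu}^{h}$ emerges naturally from the spine probabilities. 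The inner elementary-symmetric sum is then controlled by observing that only about $t_{0}+1$ positions (those with depth at most $t_{0}$ from $h$) produce the trivial factor $1$; for $k>2t_{0}$, at least $k-2t_{0}$ ramifications must occupy deep positions at distinct depths, each supplying a factor of at least $\hat{\nu}^{t_{0}-C_{1}}$ once the combinatorial factor $\binom{h}{k}$ and the constant prefactors are absorbed into $C_{1}$, yielding the claimed exponent $h+(t_{0}-C_{1})(\ell-2t_{0})$.

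The main obstacle is the combinatorial bookkeeping: the natural product bound over distinct deep positions with distinct depths is actually quadratic in $k-t_{0}$, so one must carefully balance this super-exponential decay against the potentially substantial combinatorial factor $\binom{h}{k}$, since $h$ is allowed up to $\omega^{1/2}$ and $t_{0}=\Theta(\log\omega)$. The statement's linear exponent is essentially a convenient weakening of this quadratic bound, sufficient for downstream use in the proof of \autoref{thm:UB}. Two further technical points require care: (i) rigorously invoking the branching property to assert conditional independence of the non-spine subtrees given the ramification pattern and given survival to time $t$, which follows from unravelling the conditioning on $[X_{t}>0]$; and (ii) ensuring that the $O(\hat{\nu}^{t-r})$ corrections to the offspring distribution of $\proctrunc{t}$ contribute only lower-order multiplicative factors, which holds under the hypothesis $h\leq t-t_{0}$.
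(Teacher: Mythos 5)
Your high-level plan matches the paper's: condition on the ramification pattern along the spine, bound the ``smallness at generation $h$'' probability of each non-spine subtree by Lemma~\ref{lem:DSKD}, exploit the conditional independence provided by the decomposition of $\proctrunc{t}$, and combine with the per-index ramification probabilities given by \eqref{KSNW}. Your technical points (i) and (ii) are both valid and handled in the paper exactly as you indicate. The gap is in the combinatorial step, which you yourself flag as ``the main obstacle'' without resolving.

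You work with the full ramification count $R(x)$ and reduce to an elementary-symmetric sum $\sum_{|S|=k}\prod_{r\in S}\hat{\nu}^{(h-r-1-t_{0})^{+}}$, which you propose to control via distinct depths (quadratic decay) traded off against a combinatorial factor $\binom{h}{k}$. This trade-off does not in fact close: since $\omega=e^{\Theta(t_{0})}$ and $h$ may be as large as $\omega^{1/2}$, for $k$ moderately close to $2t_{0}$ the factor $\binom{h}{k}\le h^k \le\omega^{k/2}=e^{\Theta(kt_{0})}$ overwhelms the quadratic decay $\hat{\nu}^{\Theta((k-2t_{0})^{2})}$. A correct bound on the elementary-symmetric sum must keep track of the full multiset structure (yielding a $q$-binomial coefficient, which is $O(1)$, rather than $\binom{h}{k}$); you do not do this, so as written your argument does not close.

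The paper avoids the issue by two devices you do not use. It first restricts attention to $R_{0}(x)$, the number of ramifications at spine index $\le h-2t_{0}-1$, for which every factor $a_{h-r-1}$ is \emph{uniformly} $\le C_{0}\hat{\nu}^{t_{0}}$; ramifications at the last $\sim 2t_{0}$ spine indices are discarded from the count and recovered only at the very end via the trivial implication $R(x)\ge\ell\Rightarrow R_{0}(x)\ge\ell-2t_{0}$. It then bounds the multi-sum by the crude but decisive estimate
\begin{equation*}
\sum_{i_{1}<\dots<i_{\ell}}\prod_{j=1}^{\ell}a_{i_{j}-1}\le\Bigl(\sum_{i>2t_{0}}a_{i-1}\Bigr)^{\ell}\le\Bigl(\frac{C_{0}\hat{\nu}^{t_{0}}}{1-\hat{\nu}}\Bigr)^{\ell},
\end{equation*}
the inner sum being a convergent geometric series thanks to Lemma~\ref{lem:DSKD}. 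No combinatorial factor $\binom{h}{k}$ enters, and no distinct-depth bookkeeping is needed. You should adopt both devices: first pass to $R_{0}$ so that every counted ramification contributes a uniformly small factor, then control the subset sum by a power of the single geometric sum.
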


\new{The bound on the RHS of~\eqref{JFMK} can be understood, up to error terms, as follows: the contribution $\hnu^{t}$ is the price to pay for the event $\{0<X_t<\omega\}$, as already computed in~\autoref{lem:DSKD}, while each ramification of the spine contributes with a multiplicative term $\hnu^{t_0}$ to the total probability.} Thus, the most likely situation is that $\proctrunc{t}$ has few ramifications, i.e. it is essentially composed by the spine. We will use this later to control the distribution of the marks in the leaf-to-root paths of the branching tree.

\begin{proof}
Conditioned on \(\{X_{t} > 0\}\), the number of children of
\(y_{0},\dots,y_{t-1}\) in \(\proctrunc{t}\) are distributed as independent random variables
\(\xi_{0},\dots,\xi_{t-1}\), where \(\xi_{r}\eql \xi^{(t-r)}\).

Therefore, we can generate the inhomogeneous branching process $(X^{(t)}_r)_{t \ge r \ge 0}$ \new{conditioned on survival up to generation $t$} as follows: (i) construct the
spine $y_0,\dots, y_t=x$; (ii) for every $\new{0\leq r\leq t-1}$ attach to $y_r$ a total of $\xi_{r}-1$ independent copies of
$(X_j^{(t-(r+1))})_{t-(r+1) \ge j \ge 0}$ \new{conditioned on $X_{t-(r+1)}^{(t-(r+1))}>0$}, which we denote by
\((W_{j}^{r,2})_{t-(r+1) \ge j \ge 0}, \dots, (W_{j}^{r,\xi_{r}})_{t-(r+1) \ge j \ge 0}\). 
Focussing only on generation \(t\) and \new{denoting $j=t-r$, this decomposition
gives the following recursion:
\begin{equation}\label{HSVH}
X_t
=
1+\sum_{j=1}^{t} \sum_{k= 2}^{\xi_{t-j}} W_{j-1}^{t-j,k},
\end{equation}
where we recall that $\xi_{t-j}\eql \xi^{(j)}$, and $W_{j-1}^{t-j,k}$ are independent copies of $(X_{j-1}\mid X_{j-1}>0)$, since $X_{j-1}^{(j-1)}=X_{j-1}$.}

\new{For $j\in [t]$}, consider the random variable $Z_j$ defined by
\begin{align}\label{SOWR}
Z_j& \eql
\begin{cases}
    0 & \text{with probability } b_{j} ,\\
    \left(X_{j-1} \mid X_{j-1} > 0\right)  & \text{with probability }1-b_{j} ,
\end{cases}
\end{align}
where \(b_{j} \coloneqq \p{\xi^{(j)}=1}\). We claim that, for $j\in [t]$, $Z_j$ is stochastically dominated by $\sum_{k= 2}^{\xi_{t-j}} W_{j-1}^{t-j,k}$. Indeed, they can be coupled so they are both zero when $\xi_{t-j}=1$ and \new{$Z_j \eql W_{j-1}^{t-j,2}$ otherwise. (Note that with this coupling, $Z_j$ disregards the contribution of the terms $k\geq 3$.)} 

As $Z_0=1$, it follows from~\eqref{HSVH} and~\eqref{SOWR} that
\begin{equation}\label{HSVH1}
X_t
\succeq
\sum_{j=0}^{t} Z_j
,
\end{equation}
where \(\succeq\) denotes \emph{stochastic domination} and
$Z_0,\dots,Z_{t}$ are independent random variables as described in~\eqref{SOWR}.

The process attached to the spine at generation $t-j$ will be difficult to control for small values of $j$, as the approximation of $\xi^{(j)}$ by $\xi^*$ fails for such values; see~\eqref{ODNE}. Thus, we avoid analysing the cases $j< \new{3t_0}$, assume that all of them could be ramifications and disregard their contribution in~\eqref{HSVH1} obtaining
\begin{equation}\label{HSVH2}
X_t
\succeq
\sum_{j=\new{3t_0}}^{t} Z_j
.
\end{equation}
Let $R_{0}(x)$ be the number of ramifications of the spine associated to $x$ \new{in the first $t-\new{3t_0}$ generations}. For every $\ell\geq 0$, define $p_{\ell}\coloneqq \p{0<X_t<\omega, R_{0}(x)=\ell}$ and   $B\coloneqq\prod_{\new{3t_0} \le j \le t } b_{j}$.

Suppose that there is a ramification with index $t-j$. Then there is a child $y$ in the offspring of $y_{t-j}$ different than $y_{t-j+1}$. Moreover, if $\{0<X_t<\omega\}$  holds, the branching process $(X_r')_{r\geq 0}$ rooted at $y$ \new{(distributed as $(X_r)_{r\geq 0}$)} satisfies $\{0<X_{j-1}'<\omega\}$. As $y_{t-j}$ has at most $M$ children \new{by \autoref{cond:BP}}, the probability of a ramification \new{at time $t-j$} is at most $c_{j}\coloneqq M \p{0<X_{j-1}<\omega}$; \new{thus, $c_j/b_j$ can be seen as a bound on the price to pay for a ramification. Finally note that the existence of ramifications are independent events for distinct indices due to the construction given above}.

It follows from a union bound that
\begin{equation}\label{SAPX}
    p_{\ell} \le
    B \sum_{j_{1} < \dots < j_{\ell}}
    \prod_{l=1}^{\ell}
    (c_{j_l}/b_{j_l})  
    ,
\end{equation}
where the sum is over all choices of \(\ell\) ordered and strictly increasing $j_1,\dots, j_\ell\in \{\new{3t_{0}},\dots,
t\}$, which indicates that the index $t-j_l$ is a ramification.
Since \(j \ge \new{3t_0}\) and by~\eqref{KSNW}, we have that \(b_{j} =\hnu + O(\hnu^{j})= \hnu + O(\hnu^{\new{3t_0}})\)  .
Thus,~\eqref{SAPX} implies that
\begin{equation}\label{ANCM}
    \begin{aligned}
        p_{\ell} 
        &
        \le
            (\hnu+O(\hnu^{\new{3t_0}}))^{t-\new{3t_0}-\ell}
        \sum_{j_{1} < \dots < j_{\ell}}
        \prod_{l=1}^{\ell}
        c_{j_{l}}
        \\
        &
       \leq
        2
        \hnu^{t-\new{3t_0}-\ell}
        \sum_{j_{1} < \dots < j_{\ell}}
        \prod_{l=1}^{\ell}
        c_{j_{l}}
        ,
    \end{aligned}
\end{equation}
where the last step uses that $(1+O(\hnu^{2 t_0}))^t\leq e^{O(\hnu^{\new{3t_0}}t)} \leq 2$ since  \(\hnu^{t_{0}}\leq \omega^{-1}\) by the choice of $t_0$, \(t^2\le \omega\) by the choice of $\omega$ and $t$, and $\omega$ is sufficiently large. 

One can bound $c_j$ using~\autoref{lem:DSKD} and, since $\hnu<1$, we obtain
\begin{equation}\label{VZYN}
    \sum_{j_{1}<\dots<j_{\ell}}
    \prod_{l=1}^{\ell}
    c_{j_{l}}
    \le 
    \left(
        \sum_{j=\new{3t_0}}^{t} c_{j}
    \right)
    ^{\ell}
    \le
    \left(
         C_{0} M  \sum_{j=\new{3t_0}}^{\infty} 
       \hnu^{j-\new{2t_{0}}-1}
    \right)
    ^{\ell}
    =
    \left( 
        C_0' \hnu^{t_{0}-1}
    \right)
    ^\ell
    ,
\end{equation}
for some constant $C_0'$ only depending on $M$ (see~\autoref{lem:bounded_away}).

Putting this back into~\eqref{ANCM} we have
\begin{equation}\label{YLVD}
    p_{\ell} 
   \leq 2 \hnu^{t-\new{3t_0}}\left(C_0'\hnu^{t_{0}-2}\right)^\ell
    ,
\end{equation}

Since there certainly are at most $\new{3t_0}$ ramifications in the last $\new{3t_0}$ indices of the spine (excluding \(x\)), it follows that for \(\ell \geq \new{3t_0}\),
\begin{equation}\label{QGUB}
    \begin{aligned}
    \p{0<X_t<\omega, R(x)\geq \ell} 
    &\leq  \p{0<X_t<\omega, R_{0}(x)\geq \ell-\new{3t_0}} 
	\\    
	&
    = 
    \sum_{l= \ell-\new{3t_0}}^{t} p_{l} 
	\\
	&    
    \leq 
    \frac{\new{2}C_0'}{1-\hnu^{t_{0}-2}}\cdot
    \hnu^{t-\new{3t_0}}
        \left(C_{1}
    \hnu^{t_{0}-\new{2}}
    \right)
    ^{\ell-\new{3t_0}}
    \\
    &
    \leq
    \hnu^{t+(t_{0}-C_{1})(\ell-\new{3t_0})}
    ,
    \end{aligned}
\end{equation}
for some constant \(C_{1}> 0\) depending only on $M$, provided that $t$ is large enough with respect to $M$.
\end{proof}

\subsubsection{Finishing the proof of~\autoref{thm:UB}}
\label{sec:finish:up}

Define
\begin{equation}\label{PUKK}
    \ell(t)\coloneqq
        \new{3t_0}+\frac{t\sqrt{t_0}}{t_{0}-C_1}
    \geq \new{3t_0}
    ,
\end{equation}
where $t_0$ is as in~\autoref{lem:DSKD} and $C_1$ is the constant appearing in~\autoref{lem:LWIM}. \new{The lemma implies that, up to time $t$, it is highly improbable that there are more than $\ell(t)$ ramifications if the process stays small,}
\begin{align}\label{GDAF}
\p{0<X_t<\omega, R(x)\geq\ell(t)}  \leq \hnu^{t\sqrt{t_0}}.
\end{align}


Let $E_1=\{0<X_t<\omega\}$. 
Let $x_1,\dots,x_{X_t}$ denote the individuals in the $t$-th generation of $\procall$. Let
$E_2$ be the event $ \cap_{j=1}^{X_t} \{R(x_j)< \ell(t)\}$. By a union bound over the choice
of $j\in [\omega]$ and using~\eqref{GDAF}
\begin{equation}\label{BIOB}
\p{E_1\cap E_2^c}
\leq 
\new{\sum_{j=1}^{\omega} \p{j\leq X_t < \omega, R(x_j)\geq\ell(t)}}
\leq \omega \hnu^{t\sqrt{t_0}} .
\end{equation}
By~\autoref{lem:DSKD}, we have
\begin{equation}\label{OVGA}
    \p{E_1\cap E_2}\leq \p{E_1} \leq  C_0 \hnu^{t-2t_0}
    .
\end{equation} 
Let $\gamma\coloneqq e^{-(a\hat{H}+\delta)t}$. It follows from~\eqref{BIOB} and~\eqref{OVGA} that the desired probability is
\begin{align}\label{SJEO}
        \p{(\cB_t(\gamma))^c\cap E_1}
        &
        \leq \p{(\cB_t(\gamma))^c\mid E_1\cap E_2}\p{E_1\cap E_2}+ \p{E_1\cap  E_2^c}\nonumber
        \\ 
        &
        \leq  
        C_0 \hnu^{t-2t_0}
        \sum_{j = 1}^{\omega}
        \p{
            \new{j \leq X_{t} < \omega,
            \Gamma_{j,t} <} \gamma
            \mid 
            E_1 \cap E_2
        }
        +
        \omega \hnu^{t\sqrt{t_0}}\nonumber
        \\
        &
        \leq  \omega^{C_2} \hnu^{t}
        \p{
            \Gamma_{1,t} < \gamma
            \mid 
            E_1 \cap E_2
        }
        +
        \omega \hnu^{t\sqrt{t_0}}
        , 
\end{align}
for some constant $C_2$, depending only on $M$. The two last lines respectively use a union bound on $j\in [\omega]$ and the symmetry of all individuals at a given generation.

Thus, it suffices to upper bound the probability of $\{\Gamma_{1,t} < \gamma\}$ conditioned on
\(E_1 \cap E_2\). Let $\cA_{t,\omega}$ be the set of rooted trees $T$ of height $t$ with less than $\omega$ leaves, and such that the spine associated to each leaf has at most $\ell(t)$ ramifications. The trees in
$\cA_{t,\omega}$ are the candidates for $\GW_t$, \new{the tree associated to the first $t$ generations of
the process $(X_r)_{r\geq 0}$}, when conditioned on $E_1 \cap E_2$.  We will obtain an upper bound for the
probability of $\{\Gamma_{1,t} < \gamma\}$ conditioned on $E_1 \cap E_2 \cap \{\GW_{t} \cong T\}$, uniformly for all $T\in
\cA_{t,\omega}$, and thus it will also be an upper bound for \( \p{\Gamma_{1,t} < \gamma \mid E_1 \cap E_2 } \).

\new{Fix $T\in \cA_{t,\omega}$ and condition on $E_1 \cap E_2 \cap \{\GW_{t} \cong T\}$. Let \(y_{0},\dots,y_{t}\) be the individuals of the spine associated to \(x_{1}\), by relabelling we may assume each has cousin index $1$ in its generation. We have
$$
\Gamma_{1,t}= \prod_{r=1}^{t} \frac{1}{\zeta_{1,r}}.
$$
In order to bound $\Gamma_{1,t}$ from above we define a random variable $W_t$ that satisfies $W_t\preceq \Gamma_{1,t}$ and then show that the probability $W_t$ is large is exponentially small. For this purpose, define 
\begin{equation}\label{SPMV}
W_t \coloneqq \prod_{r=1}^t w_r,
\end{equation}
where $w_r$ are now defined.}

If an index $r\in \{0,\dots,t-t_0\}$ is not a ramification of the spine associated to $x_1$ in $T$, we first sample
the number of children of \(y_{r}\) in \(\procall\) according to the distribution $\xitilde^{(t-r)}$, where  for every $k\geq 1$ and $a \ge t_0$,
\begin{equation}\label{OYXN}
\begin{aligned}
    \p{\xitilde^{(a)}=k}
    &\coloneqq
    \p{X_{1}=k\mid X_{1}^{(a)}=1}
    \\
    &=
    \frac{k s_{a-1} (1-s_{a-1})^{k-1}\p{\xi=k}}{s_{a} \p{\xi^{(a)}=1}}
    \\
    &=
    \p{\xitilde = k}
    +
    O(\hnu^{a}),
\end{aligned}
\end{equation}
where \(\xitilde\) was defined in~\eqref{SJWU}.
\new{That is, we are sampling the offspring of $y_r$ by conditioning on only one of its children having surviving progeny up to generation $t$, as there is no ramification in $r$.}
Conditioned on \(\xi_{1,r} \eql \xitilde^{(t-r)}\), we let $w_r=1/\zeta_{1,r}$. 

If $r\in\{0,\dots,t-t_0\}$ is a ramification of $T$, or if $r\in\{t-t_0\new{+1},\dots,t\}$, we
simply set $w_r=1/M\).

\new{By~\eqref{OYXN} and the value of $t_0$, we can couple \(\xitilde^{(t)}, \dots, \xitilde^{(0)}\)
with \(\xitilde_0, \dots,\xitilde_{t}\), iid copies of \(\xitilde\), such that 
\(\p{\xitilde^{(t-r)} \ne \xitilde_{r}} < \omega^{-1}\) for every $r\leq t-t_0$.}
Thus, the number \new{of $r\in\{0,\dots,t-t_0\}$} where the two
sequences differ is stochastically bounded from above by a binomial random variable with  \(t\) trials and probability \(\omega^{-1}\). 
Let \(E_3\) be the event that \((\xitilde^{(t)}, \dots, \xitilde^{(t_0)})\) and \( (\xitilde_0,
\dots,\xitilde_{t-t_0})\) differ at at most $m(t)\coloneqq t(\log t)^{-1/2}$ positions. It follows that
\begin{equation}\label{BPYE}
    \p{E_3^{c}}
    \le
		(t/\omega)^{m(t)}
    \le
        t^{-m(t)}
    =
        e^{-t\sqrt{\log{t}}}
    ,
\end{equation}
where we used that \(t \le \sqrt{\omega}\).  Thus, we obtain
\begin{equation}\label{HRST}
    \p{
        \Gamma_{1,t} < \gamma
        \mid 
        E_1 \cap E_2 \cap \{\GW_t\cong T\}
    }
    \le
    \p{
        \Gamma_{1,t} < \gamma
        \mid 
        E_1 \cap E_2 \cap E_3 \cap \{\GW_t\cong T\}
    }
    +
    e^{-t\sqrt{\log{t}}}
    .
\end{equation}

Let \((\tilde{\zeta}_{r})_{r \ge 0}\) be iid copies of \(\tilde{\zeta}\) as defined in~\autoref{sec:cond:surv}.
Conditioning on \(E_1 \cap E_2 \cap E_3 \cap \{\GW_t\cong T\}\) and since $\tilde\zeta_r\leq M$, we have
\begin{equation}\label{SSUI}
    \Gamma_{1,t} \new{\succeq W_t} \geq
    M^{-(\ell(t)+t_{0}+m(t))}
    \prod_{r=1}^{t} (\tilde \zeta_r)^{-1}
    .
\end{equation}
\new{Here the exponent of $M$ accounts for the at most $\ell(t)$ ramification points, the last $t_0$ generations where the error of approximating $\xitilde^{(a)}$ by $\xitilde$ is too large, and the at most $m(t)$ positions where the coupling between $\xitilde^{(a)}$ and $\xitilde$ fails. }

Let $Z_r = \log \tilde{\zeta}_r$. 
It follows from~\eqref{WOEW} that
\begin{equation}\label{NFCE}
    \begin{aligned}
        &
        \p{
            \Gamma_{1,t} < \gamma
            \mid 
            E_1 \cap E_2 \cap E_3 \cap \{\GW_t\cong T\}
        }
        \\
        &
        \le
        \new{\p{
            W_{t} < \gamma
            \mid 
            E_1 \cap E_2 \cap E_3 \cap \{\GW_t\cong T\}
        }}
        \\
        &
        \le
        \p{
            \frac{1}{t}
                \sum_{r=1}^{t} Z_{r}
            >
            (a\hat{H}+\delta)  - \frac{(\ell(t)+t_{0}+m(t)) \log M}{t}
        }
        \\
        &
        \le
        \p{
            \frac{1}{t}
                \sum_{r=1}^{t} Z_{r}
            >
            a\hat{H}
        }
        \\
        &
        \leq
        e^{- (I(a\hat{H}) -\delta)t}
        ,
    \end{aligned}
\end{equation}
where in the last step we used that \(\ell(t),m(t),t_{0}$ are asymptotically smaller than $t$, and that $t$ is large enough with respect to $\delta$ and $M$.
Putting this into~\eqref{HRST}, we have
\begin{equation}\label{JEBA}
    \p{
        \Gamma_{1,t} < \gamma
        \mid 
        E_1 \cap E_2 \cap \{\GW_t\cong T\}
    }
    \le
    2e^{-(I(a\hat{H}) -\delta)t}
    .
\end{equation}
\autoref{thm:UB} follows by putting the above into~\eqref{SJEO} and taking $C$ sufficiently large with respect to $M$.

\subsubsection{A corollary}

Similarly as in~\eqref{ODRS}, consider
\begin{equation}\label{PBJK}
    \phi(a)\coloneqq \frac{1}{a}\left(|\log{\hnu}| + I(a\hat{H})\right),
\end{equation}
and let $a_{0}$ be the value $a\in [0,\infty)$ that minimises $\phi(a)$, which satisfies $a_0\geq 1$.  

Define the random variable
\begin{equation}\label{SOEP}
t_{\omega}\coloneqq \inf\{t\geq 0:\,X_t \geq \omega\}.
\end{equation}

The following corollary \new{rephrases the result obtained in \autoref{thm:UB}: it parametrises it in terms of a target probability $p$ instead of a fixed time $t$, replacing the latter by the random time $t_\omega$. This phrasing will be convenient later.}
\begin{cor}\label{cor:DWPD}

For any $\varepsilon>0$, $M\in \mathbb{N}$ and $p\new{>0}$ sufficiently small with respect to $M$ and $\varepsilon$, the following holds. 
Let \(\omega \in
(|\log p|^3,e^{|\log p|^{1/3}})\) and 
$\gamma_p \coloneqq p^{(1+\varepsilon)\frac{\hat{H}}{\phi(a_0)} }$ \new{where $\hat{H}$ and $\phi$ are defined respectively as in~\eqref{EFND} and~\eqref{PBJK}}. 

\new{Let \((X_t)_{t \ge 0}\) be a marked branching process with distribution \(\eta=(\xi,\zeta)\)
     satisfying~\autoref{cond:BP}}.
Then we have
\begin{equation}\label{IJMZ}
\p{
    (\cB_{t_\omega}\left(\gamma_p\right) )^{c}
    \cap
    \left\{t_\omega<\infty \right\}
}
\leq 
\omega^C p^{1-\varepsilon}
,
\end{equation}
for some constant $C$ only depending on $M$.


\end{cor}
\begin{proof}
     For \(t \leq \log^2 \omega\), \new{since $\omega$ is large enough with respect to $M$ and so to all parameters of the branching process (see \autoref{lem:bounded_away})}, we deterministically have
    \begin{equation}\label{LMAC}
        M^{-t} > M^{-\log^2\omega} > \gamma_p
        ,
    \end{equation}
    \new{and, since $\zeta\leq M$}, it implies
    \begin{equation}\label{BBCW}
        \p{
            (\cB_{t}(\gamma_p))^{c}
        }
        =
        \p{
            \cup_{i\in [X_{t}]}
            \left\{
                \Gamma_{i,t} < \gamma_p
            \right\}
        }
        \le
        \p{
            \cup_{i\in [X_{t}]}
            \left\{ 
                M^{-t} < \gamma_p
            \right\}
        }
        =
        0
        .
    \end{equation}
    \new{Recall that $\hnu\in (0,1)$} and let $t_* = {\abs{\log p}}/\abs{\log\hnu}< \sqrt{\omega}$.
   Choose \(t\in (\log^2 \omega,t_{*}]\) \new{so $\omega\in (t^2, e^{\sqrt{t}})$}. Set 
   $
   a\coloneqq \frac{\abs{\log p}}{t \phi\left(a_{0}\right)}$ \new{so $\gamma_p=e^{-(1+\varepsilon)a \hat{H} t}$}. As $\phi(a_{0})\leq \phi(1)=\abs{\log\hnu}$, we have $a\geq 1$. 
    It follows from~\autoref{thm:UB} \new{with $\delta=\varepsilon a \min\{ \hat{H},\phi(a)\}$} that there exists a constant $C_1>0$ such that    
    \begin{equation}\label{VVLT}
        \begin{aligned}    
            \p{\left(\cB_{t}\big(e^{-(1+\varepsilon)a \hat{H} t}\big)\right)^c\cap \{0<X_{t}<\omega\}}
            &
            \le
            \omega^{C_1}  \exp {-(1-\varepsilon)a\phi(a)t}
            \\
            &
            \le
            \omega^{C_1}  p^ {(1-\varepsilon)\frac{\phi\left(a\right)}{\phi\left(a_{0}\right)}}
            \le
            \omega^{C_1} p^{1-\varepsilon}
            ,
        \end{aligned}
    \end{equation}
    where the last step uses the fact that $a_{0}$ minimises $\phi(a)$ for all $a\geq 0$.
    Thus, 
    \begin{align*}
        \p{(\cB_{t_\omega}(\gamma_p))^c \cap \{t_\omega<\infty\}}
        &
        =
        \sum_{t\geq 0} \p{(\cB_{t_\omega}(\gamma_p))^c \cap\{ t_\omega=t+1\}}
        \\
        &
        \leq  
        \sum_{t= \log^{2}\omega}^{t_*} 
        \p{(\cB_{t}(\gamma_p))^c \cap \{0<X_{t}<\omega\}}
        +
        \p{t_{\omega} > t_{*}}
        \\
        &
        \leq 
        t_* \omega^{C_1} p^{1-\varepsilon}
        +
        C_0 \hnu^{t_*-2t_0}
        \\
        &
        \leq 
			\omega^{C} p^{1-\varepsilon}
        ,
    \end{align*}
    for some constant $C$ only depending on $M$. In the second last inequality, we used~\autoref{lem:DSKD} to bound
    \(\p{t_{\omega}>t_{*}}\). In the last inequality, we used that $t_0$ \new{satisfies $t_0/t_*\to 0$ as $\omega\to \infty$, since $|\log p|\geq \log^3 \omega$ by our assumptions}. 
\end{proof}

\subsection{A truncated Martingale}
\new{As explained in \autoref{sec:strategy}, in order to guarantee concentration of certain random variables, it will be convenient that all paths are assigned a small weight. In this direction, we introduce a truncated version of $\Gamma_t$.}

Fix $t_{0}\in \mathbb{N}$ and
$\gamma>0$. Recall that \( (p^{t-r}(i,t),r) \) is the index of the ancestor of \( (i,t)\) in
generation \(r\).  For every $t \ge
t_{0}$ and $i\in [X_{t}]$, define
\begin{equation}\label{XYYI}
\hat\Gamma_{i,t} \new{(t_0,\gamma)}
\coloneqq \gamma \prod_{\new{r=t_0+1}}^{\new{t}} \frac{1}{\zeta_{p^{t-r}(i, t),r}}
\text{ and }
\hat\Gamma_t \new{(t_0,\gamma)}\coloneqq \sum_{i\in [X_{t}]}\hat\Gamma_{i,t}\new{(t_0,\gamma)}
.
\end{equation}
\new{Comparing to the definition of $\Gamma_{i,t}$ in~\eqref{OSKE}, in $\hat \Gamma_{i,t}$ the contribution of the first $t_0$ generations is replaced by $\gamma$. Each time we consider $\hat \Gamma_{i,t}$, it will be under the event $\cB_{t_{0}}(\gamma)= \cap_{i\in [X_{t_0}]}\{\Gamma_{i,t_0}\geq \gamma\}$, so $\hat\Gamma_{i,t}\new{(t_0,\gamma)} \le
\Gamma_{i,t}$ for $i \in [X_{t}]$ and $ \hat\Gamma_t \new{(t_0,\gamma)}\le \Gamma_{t}$.}


\begin{prop}\label{prop:martingale}
Let \(\procall\) be a marked branching process with distribution \(\eta=(\xi,\zeta)\) satisfying~\autoref{cond:BP} with $M\in \mathbb{N}$. \new{Let $\omega, \ell\in \N$ and choose $\delta\in (0,1)$ sufficiently small such that $(1-2M\delta)^\ell\geq 1/2$. If $\abs{\E{\xi/\zeta}-1} \leq M \delta$, then for any $t_{0}\in \mathbb{N}$, $t\in [t_{0},t_0+\ell]$ and $\gamma>0$ we have 
\begin{equation}\label{OQFC}
\p{\hat\Gamma_t\new{(t_0,\gamma)} < \omega \gamma/2 \mid \{t_{\omega}=t_{0}\} \cap \cB_{t_{0}}(\gamma)} \leq  2\ell e^{-\delta^2\omega/4}
,
\end{equation}
where $t_\omega$ and $\cB_{t_{0}}(\gamma)$ are defined respectively as in~\eqref{SOEP} and in~\eqref{SPQW}.
}
\end{prop}

\new{This result states that, provided that there at least $\omega$ individuals in generation $t_0$ and all path connecting each of them to the root have weight at least $\gamma$, so $\Gamma_{t_0}\geq \omega \gamma$, at generation $t\geq t_0$ the total weight of the leaf-to-root paths is at least half of the weight at generation $t_0$. Equivalently, we can grow the branching process from generation $t_0$ without losing too much weight.}

\begin{proof}
\new{Throughout the proof, we condition on $\{t_{\omega}=t_{0}\}$ and $\cB_{t_{0}}(\gamma)$.  For $r\in [t_0,t]$, let $\hat\Gamma_r=\hat\Gamma_r(t_0,\gamma)$.} Consider the event
\begin{equation}\label{UOGH}
E_r=\{\hat \Gamma_r/\hat\Gamma_{t_{0}}\in (1/2,3/2) \}.
\end{equation}
We will lower bound its probability using Azuma's inequality; see~\cite[pp.~92]{molloy2002a}.

Recall that $m\coloneqq X_{\new{r}}$, $\hat\Gamma_{r-1}$ and $p(i,r)$ for $i\in [m]$ are all measurable with respect to $\cF_{r-1}$. 
\new{Let $\eta_{1,r}, \dots, \eta_{m,r}$, $\eta'_{1,r}, \dots, \eta'_{m,r}$ be iid copies of $\eta$.
Given $\cF_{r-1}$, $\hat\Gamma_{r}=g(\eta_{1,r}, \dots, \eta_{m,r})$, where \(g\) is a function depending on
\(\cF_{r-1}\). Note that for every choice of $\eta_{1,r}, \dots, \eta_{m,r}$ and $\eta_{i,r}'$, 
\begin{equation}\label{ZYVT}
|g(\eta_{1,r}, \dots,\eta_{i,r} ,\dots,\eta_{m,r})-g(\eta_{1,r}, \dots,\eta_{i,r}',\dots,\eta_{m,r})|=\big|\tfrac{1}{\zeta_{i,r}} -\tfrac{1}{\zeta'_{i,r}} \big| \cdot \hat\Gamma_{p(i,r),r-1}
\leq \hat\Gamma_{p(i,r),r-1}. 
\end{equation}
}

By~\autoref{rem:WBP}, $(\hat\Gamma_r \E{\xi/\zeta}^{-(r-t_{0})})_{r \ge t_{0}}$ is a martingale with
respect to \( (\cF_{r})_{r \ge t_{0}}\). Thus, by our hypothesis on $\eta$,
\begin{equation}\label{XQGJ}
    \abs{\hat{\Gamma}_{r-1}-\E{\hat\Gamma_{r} \mid \cF_{r-1}}}
    =
    \abs{\hat{\Gamma}_{r-1}- \E{\xi/\zeta}\hat{\Gamma}_{r-1}}
    \leq M \new{\delta} \hat{\Gamma}_{r-1} \eqqcolon s
    .
\end{equation}
It follows from~\eqref{XQGJ} and Azuma's inequality with the bounded difference condition ensured by~\eqref{ZYVT}, that
\begin{equation}\label{LSJF}
    \begin{aligned}
        &
        \p{\abs{\hat\Gamma_{r}-\hat{\Gamma}_{r-1}}> 2 s \mid \cF_{r-1}} 
        \\
        &
        \le
        \p{\abs{\hat\Gamma_{r}-\E{\hat\Gamma_{r} \mid \cF_{r-1}}}> s \mid \cF_{r-1}} 
        +
        \p{\abs{\hat{\Gamma}_{r-1}-\E{\hat\Gamma_{r} \mid \cF_{r-1}}}> s \mid \cF_{r-1}} 
        \\
        &
        \leq 
        2
        \exp{-\frac{s^2}{2\sum_{i=1}^m (\hat{\Gamma}_{p(i,r), r-1})^{2}}}.
    \end{aligned}
\end{equation}
Since $\hat \Gamma_{r-1}\leq \gamma$ by definition, 
\begin{equation}\label{VJDI}
\sum_{i=1}^m (\hat{\Gamma}_{p(i,r), r-1})^{2}\leq \sum_{j=1}^{X_{r-1}} M( \hat{\Gamma}_{j, r-1})^2\leq \gamma M \sum_{j=1}^{X_{r-1}} \hat{\Gamma}_{j,
r-1}  = \gamma M \hat\Gamma_{r-1},
\end{equation}
and thus
\begin{equation}\label{SNFK}
\ind{E_{r-1}}\p{|\hat\Gamma_{r}- \hat\Gamma_{r-1}|> 2 s \mid \cF_{r-1}} 
\leq 
2 e^{-\new{(\delta^{2}M/2\gamma)\hat\Gamma_{r-1}}}
\leq 
2 e^{-\new{\delta^2\omega/4}},
\end{equation}
where in the last inequality, we used \(\hat{\Gamma}_{r-1} \ge \omega \gamma/2\) on \(E_{r-1}\) .

Let \(F_{t_{0}}=  \{t_\omega=t_0\} \cap \cB_{t_{0}}(\gamma)\) and let
$F_r= \{\abs*{\hat\Gamma_{r}/\hat\Gamma_{r-1}-1} \le 2 M \new{\delta}\}$ for \(r \geq t_{0}+1\).
By~\eqref{SNFK},
\begin{equation}\label{FJEN}
    \p{\cup_{r=t_{0}+1}^t F_r^c \mid F_{t_{0}}} 
\leq 
\sum_{r=t_{0}+1}^{t} \p{F_{r}^c\mid \cap_{t_{0} \le r'< r} F_{r'}} 
\leq 
\new{2 \ell e^{-\delta^2\omega/4}},
\end{equation}
where we used $\cap_{t_{0} \le r'< r} F_{r'}\subseteq E_{r-1}$. So, with the desired probability
\begin{equation}\label{BQHU}
\hat\Gamma_t \geq \new{(1- 2M \delta)^{\ell}} \hat{\Gamma}_{t_0}\geq \frac{\omega\gamma}{2}.
\end{equation}
\end{proof}

\section{Exploring the random directed graph}\label{sec:graph}

\new{We now turn back our attention to the directed configuration model. In this section we describe an exploration process that reveals it while exploring the in-neighbourhood of an arbitrary head. Then, we show how to couple the process with a marked branching process, allowing us to transfer the results in \autoref{sec:branching} to the random graph setting, which will be done \autoref{sec:pi:min}.}

\subsection{The exploration process}
\label{sec:explore}


Recall that we refer to heads and tails respectively for the in-half and out-half edges used in the pairing of the directed configuration model. \new{We use superscripts $-$ and $+$ respectively to emphasise that a certain half-edge (or set of half-edges) is a head or a tail}. For a set of vertices \(\cV \subseteq [n]\), let \(\cE^{\pm}(\cV)\) be the set of heads/tails incident to
\(\cV\). Let \(\cE^{\pm}=\cE^{\pm}([n])\) be the set of all heads/tails. For a set of half-edges \(\cX\), let
\(\cV(\cX)\) be the vertices that are end-points of elements in \(\cX\). For $e^\pm\in
\cE^\pm$, we use \(v(e^\pm)\) to denote the end-point of \(e^\pm\).

\new{Our exploration process is a Breadth First Search (BFS) algorithm and, informally speaking, works as follows. It starts from an arbitrary head \(f\in \cE^-\), keeps each half-edge in exactly one of three states --- \emph{active,
paired}, or \emph{undiscovered} --- and sequentially pairs an active head with a random unpaired tail, updating the states afterwards.}

More precisely, let \(\cA_i^{\pm }\), \(\cP_i^{\pm}\)
and  \(\cU_i^{\pm }\) denote the set of tails/heads in the active, paired and undiscovered states
respectively, after the \(i\)-th pairing of half-edges. Initially, let
\begin{equation}\label{eq:APUF}
    \cA_{0}^{-}=\{f\},\,
    \cA_{0}^{+}=\cE^{+}(v(f)) ,\,
    \cP_{0}^{\pm}=\emptyset\text{ and }
    \cU_{0}^{\pm}=\cE^{\pm} \setminus (\cA^{\pm}_{0} \cup    \cP^{\pm}_{0})
    .
\end{equation}
Then set \(i=1\) and proceed as follows: \leavevmode
\begin{enumerate}[\normalfont(i)]
    \item Let \(e_{i}^{-}\) be one of the heads which became active earliest in
        \(\cA_{i-1}^{-}\).
    \item Pair \(e_{i}^{-}\) with a tail \(e_{i}^{+}\) chosen uniformly at random from
        \(\cE^+\setminus \cP^+_{i-1}\).  Let \(v_{i} = v(e_{i}^{+})\) and \(\cP_{i}^{\pm} =
        \cP_{i-1}^{\pm} \cup \{e^{\pm}_{i}\}\).
    \item 
Update the active sets as follows:
\begin{itemize}
\item[(a)] If \(e^{+}_{i} \in \cA_{i-1}^{+}\),
        then \(\cA_{i}^{\pm} = \cA_{i-1}^{\pm}\setminus \{e_i^\pm\}\).

\item[(b)] If \(e_i^{+}\in
        \cU^{+}_{i-1}\), then \(\cA_{i}^{\pm} = (\cA_{i-1}^{\pm} \cup \cE^{\pm}(v_{i})) \setminus
        \{e^{\pm}_{i}\}\).
\end{itemize}    
    
    \item If \(\cA_{i}^{-}\!\!=\emptyset\), terminate; otherwise, let \(\cU_i^{\pm}\! =\!\cE^{\pm}\setminus (\cA^{\pm}_{i} \cup
        \cP^{\pm}_{i})\), set \(i=i+1\) and go to (i).
\end{enumerate}
If at step $i$ we are in (iii.a), i.e. \(e^{+}_{i} \in \cA_{i-1}^{+}\), we say that a \emph{collision} has happened. \new{If there is no collision at any of the steps $1\leq j\leq i$, the pairing generated by the process up to step $i$, which corresponds to a partial in-neighbourhood of \(f\), induces a tree in the graph.}

With this motivation and in parallel to the exploration process, we construct a \new{nested sequence of rooted trees $(T_f^-(i))_{i\geq 0}$ as follows}.
Let \(T^{-}_{f}(0)\) be
a tree with one node, the root, which corresponds to \(f=e_1^-\).
For each $i\geq 1$, \(T^-_{f}(i)\) is constructed as follows: if \(e^{+}_{i} \in \cA_{i-1}^{+}\) then \(T^-_{f}(i)=T^-_{f}(i-1)\), and if \(e_{i}^{+} \in
\cU_{i-1}^{+}\) then obtain \(T^-_{f}(i)\) from  \(T^-_{f}(i-1)\) by adding
\(\abs{\cE^{-}(v_{i})}\) children to the node corresponding to \(e_{i}^{-}\), each
corresponding to a head in \(\cE^{-}(v_{i})\). See
\autoref{fig:explore} for an example.

\begin{figure}[ht]
\centering
			\begin{tikzpicture}[scale=0.8,square/.style={regular polygon,regular polygon sides=4}]

			\node[circle,draw,minimum width=0.8cm] (V)	at (0,12) {};	
			\node[circle,draw,minimum width=0.8cm]	(V1) at (0,9) {$v_1$};	
			\node[circle,draw,minimum width=0.8cm]	(V2) at (0,6) {$v_2$};	
			\node[circle,draw,minimum width=0.8cm]	(V3) at (3,6) {$v_3$};	
			\node[circle,draw,minimum width=0.8cm]	(V4) at (-3,3) {$v_4$};
			\draw[-Latex] (V1) --  (V) node[pos=0.75,right]{$e_1^-=f$} node[pos=0.25,right]{$e_1^+$};
			\draw (-0.1,10.5) -- (0.1,10.5);
			
			\draw[-Latex] (V2) -- (V1) node[pos=0.75,right]{$e_2^-$} node[pos=0.25,right]{$e_2^+$};
			\draw (-0.1,7.5) -- (0.1,7.5);
			
			\draw[-Latex] (V3) -- (V1) node[pos=0.75,right]{$e_3^-$} node[pos=0.25,right]{$e_3^+$};
			\draw (1.4,7.4) -- (1.6,7.6);
			
			\draw[-Latex] (V4) -- (V2) node[pos=0.75,right]{$e_5^-$} node[pos=0.25,right]{$e_5^+$};
			\draw (-1.4,4.4) -- (-1.6,4.6);
			
			\draw[-Latex] (V4) -- (V1) node[pos=0.75,left]{$e_4^-$} node[pos=0.25,left]{$e_4^+$};
			\draw (-1.6,6.05) -- (-1.4,5.95);

			\node (F01)	at (1.3,13.3) {};
			\draw[lightgray] (V) -- (F01);
			\node (F02)	at (-1.3,13.3) {};
			\draw[lightgray] (V) -- (F02);
			
			\node (F1)	at (1.3,10.3) {};
			\draw[lightgray] (V1) -- (F1);
			
			\node (F2)	at (-1.3,10.3) {};
			\draw[lightgray] (V1) -- (F2);
			
			\node (F3)	at (1.3,7.3) {};
			\draw[lightgray] (V2) -- (F3);
			
			\node (F31)	at (3.9,7.3) {};
			\draw[lightgray] (V3) -- (F31);
			
			\node (F32)	at (-4.5,4.7) {};
			\draw[lightgray] (V4) -- (F32);
			
			\node (F33)	at (-3.5,4.7) {};
			\draw[lightgray] (V4) -- (F33);

			\node (F4)	at (1.3,4.7) {};
			\draw[-Latex] (F4) -- (V2);
			
			\node (F5)	at (0,4.5) {};
			\draw[-Latex] (F5) -- (V2);
			
			\node (F6)	at (-1.3,10.7) {};
			\draw[-Latex] (F6) -- (V);
			
			\node (F7)	at (-3,1.5) {};
			\draw[-Latex] (F7) -- (V4);
			
			\node[square,draw] (E1) at (11,11) {$e_1^-$};
			\node[] (E10) at (12,11) {$3$};
			\node[square,draw] (E2) at (11,8) {$e_2^-$};
			\node[] (E20) at (12,8) {$2$};
			\node[square,draw] (E3) at (14,8) {$e_3^-$};
			\node[] (E30) at (15,8) {$2$};
			\node[square,draw] (E4) at (8,8) {$e_4^-$};
			\node[] (E40) at (9,8) {$4$};
			\node[square,draw] (E5) at (9,5) {$e_5^-$};
			\node[] (E50) at (10,5) {$4$};
			
			\node[square,draw,fill=lightgray] (E6) at (11,5) {};
			\node[square,draw,fill=lightgray] (E7) at (13,5) {};
			\node[square,draw,fill=lightgray] (E8) at (7,5) {};
			
			\draw (E1) -- (E2);
			\draw (E1) -- (E3);
			\draw (E1) -- (E4);
			\draw (E2) -- (E5);
			\draw (E2) -- (E6);
			\draw (E2) -- (E7);
			\draw (E4) -- (E8);
			\end{tikzpicture}
\caption{An ongoing exploration process \new{at step $i=5$} and the associated tree $T_f^-(5)$. Active nodes are depicted as small gray squares and marks are assigned to paired nodes.}
\label{fig:explore}
\end{figure}


\new{The nodes in \(T^-_{f}(i)\) correspond to heads that have been discovered, i.e. not in $\cU^-_i$, so we can assign them a marking $L_f^-(i)$ as follows. Label each node as \emph{paired} or \emph{active} depending on whether it belongs to $\cP^{-}_i$ or to $\cA^{-}_i$, respectively. Moreover, if the node corresponding to \(e_{i}^{-}\) is paired, assign it the \emph{mark} $\abs{\cE^{+}(v_{i})}$; see
\autoref{fig:explore}. To keep the notation light, we will abuse notation and write $(T_f^-(i))_{i\geq 0}$ for the sequence of marked trees $(T_f^-(i), L_f^-(i))_{i\geq 0}$.}

For half-edges \(e_1\) and \(e_2\), we define the \emph{distance} from $e_1$ to $e_2$, denoted by
\(\dist(e_1,e_2)\), to be the number of edges of the shortest path from \(v(e_1)\) to \(v(e_2)\) which starts
with the edge containing \(e_{1}\) if \(e_{1}\) is a tail, and which ends with the edge containing
\(e_{2}\) if \(e_{2}\) is head. Let us stress that this definition allows for head-to-head, tail-to-tail, tail-to-head and head-to-tail distances; for example, in~\autoref{fig:explore}, \(\dist(e_{2}^{-},
e_{1}^{-})=\dist(e_{2}^{+}, e_{1}^{+})=1\), \(\dist(e_{5}^{+}, e_{1}^{-})=3\), \(\dist(e_{4}^{-},
e_{1}^{+})=0\). 

\new{For $t\geq 0$, let \(i_t\) be the last step of the exploration process where a head at
distance \new{\(t\)} to \(f\) is paired. Then we can identify the heads at distance $r\leq t$ to $f$ in the graph, which we will later call the $r$-th in-neighbourhood of $f$, as the nodes in $T_f^-(i_{t})$ at distance $r$ from $f$.}

\new{We say that a rooted tree \(T\) of height $t$ where nodes are labelled as paired or active is \emph{feasible} if the set of active nodes is precisely the set of nodes at distance \(t\) from the root of the tree. If so, we let $p(T)$ denote the number of paired nodes of $T$. For instance, in the example displayed in~\autoref{fig:explore}, $T_f^-(4)$ is feasible with $p(T)=4$, while $T_f^-(5)$ is not.
In fact \(T^-_{f}(i_{t-1})\) is always a feasible tree of height $t$; we will use this when coupling the branching and the exploration processes. 
}

\subsection{Coupling the exploration and branching processes}
\label{sec:coupling}

The goal of this section is to couple the marked tree constructed during the exploration process with a marked branching process with distribution \(\eta = (\xi,\zeta) \eql \dout\), as defined in~\eqref{UHJC}, or close to it. \new{Let us first check that $\eta$ satisfies the conditions needed to use the results derived in~\autoref{sec:branching}.
On the one hand, by the hypothesis of~\autoref{thm:main}, we have \(\xi,\zeta \le M\) and \(\zeta\geq 2\), in particular $\new{\nu}=\E{\xi}= \E{\zeta}\geq 2>1$. On the other hand, as discussed in \autoref{sec:strategy}, we have restricted ourselves to $\delta^-\in\{0,1\}$, so $\p{\xi\in\{0,1\}}>0$. It follows that~\autoref{cond:BP} is satisfied by $\eta$.}

In addition, 
\begin{equation}\label{QPQX}
    \E{\xi/\zeta}
    = 
    \E{\doutm/\doutp}
    = 
    \sum_{k,\ell\geq 1} \frac{k}{\ell}\cdot \frac{\ell n_{k,\ell} }{m}
    =
    \frac{1}{m} \sum_{k,\ell\geq 1}  k n_{k,\ell}=1,
\end{equation}
\new{so it satisfies the additional condition required by \autoref{prop:martingale}.}

Starting here and throughout the rest of the paper, we will use asymptotic notation with respect to $n\to\infty$.


We will use a slightly perturbed version of \(\eta\).
The probability distribution \(\eta^{\ua}\) is defined by
\begin{equation}\label{VMLKM}
    \p{\eta^{\ua} = (k, \ell)}
    \coloneqq
    \begin{cases}
        c^{\ua} \p{\eta = (k, \ell)} & k \ge 1,
        \\
        c^{\ua} \p{\eta = (0, \ell)}+n^{-1/2} & k = 0,
    \end{cases}
\end{equation}
where \(c^{\ua} = 1-O(n^{-1/2})\) is a normalising constant.

\begin{rem}\label{rem:cont_parameters}

\new{The distribution $\eta^\ua$ also satisfies \autoref{cond:BP}. Indeed, (i) is satisfied as $\E{\xi^\ua} = (1+o(1))\nupm>1$, (iii) holds since, by construction, $\p{\xi^\ua\in\{0,1\}}\geq \p{\xi\in\{0,1\}}$ and (ii) and (iv) hold trivially.}
Moreover, an approximate version of~\eqref{QPQX} also holds,
\begin{align}\label{SDSO}
\left|\E{\xi^{\ua}/\zeta^{\ua}}-1\right| =O(n^{-1/2}).
\end{align}
\new{By taking $\delta\to 0$ as $n\to \infty$, it will be possible} to
apply~\autoref{prop:martingale} to marked branching processes with distribution $\eta^\ua$.

We can define \(\hnu^{\ua}\), \(\hat{H}^{\ua}\), \(\phi^{\ua}(a)\), \(a_{0}^{\ua}\) for \(\eta^{\ua}\)
analogously to \(\hnu\), \(\hat{H}\), \(\phi(a)\) and \(a_{0}\) for \(\eta\)
(see~\eqref{PXNE},~\eqref{EFND} and~\eqref{PBJK}). It is easy to verify that
$\hnu^\ua=(1+o(1))\hnu$, $\hat{H}^\ua=(1+o(1))\hat{H}$ and $\phi^\ua(a)=(1+o(1))\phi(a)$.
By continuity of $\phi(a)$, we have $\phi^\ua(a_0^\ua)=(1+o(1))\phi(a_0)$. So
\(\hat{H}^{\ua}/\phi^{\ua}(a_{0}^{\ua}) = (1+o(1)) \hat{H}/\phi(a_{0})\).
\end{rem}

Denote by \(\GW_{\eta}\) the tree associated with a marked branching process with distribution \(\eta\). \new{Let $T$ be a rooted tree where nodes are labelled as paired or active, which is feasible, and with non-negative integer marks on its paired nodes.}
We use the notation \(\GW_{\eta} \cong T\) to denote that \(T\)
is \emph{isomorphic} to \((\GW_{\eta})_t\), \new{where $t$ is the height of $T$}, in the following sense: for each paired node of \(T\), its degree and its mark agree with the ones in \(\GW_{\eta}\), \new{where individuals in $\GW_{\eta}$ are ordered according to their generation index first and their cousin index later}.  Similarly, we write \(T_f^-(p(T)) \cong T\) to denote that $T_f^-(p(T))$ is isomorphic to $T$ in the previous sense, \new{where $p(T)$ is the number of paired nodes in $T$}.
For a set of feasible marked trees
\(\cT\), let \(\{\GW_{\eta} \in \cT\} \coloneqq \cup_{T \in \cT} \{\GW_{\eta} \cong T\}\) and \(\{T_f^-
\in \cT\} \coloneqq \cup_{T \in \cT}  \{T^-_{f}(p(T)) \cong T\}\).  We will use the following coupling lemma to transfer the results on branching processes to random graphs.

\begin{lemma}\label{lem:eq_T_GW}
    Let $\beta \in (0, 1/4)$ and let \(\cT\) be a set of feasible marked trees such that
    \(p(T)\leq n^{\beta}\) for all \(T \in \cT\). For any $f\in \cE^-$, we have
    \begin{equation}\label{PLUL}
        \left(
        1+o(1)
        \right)
        \p{\GW_{\eta}\in \cT} +O(n^{2\beta -3/2})
        \leq 
        \p{T_f^- \in \cT}
        \leq  
        \left( 
            1+o(1)
        \right)
        \p{\GW_{\eta^{\ua}}\in \cT}
        .
    \end{equation}
\end{lemma}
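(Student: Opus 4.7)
The plan is to expose the tree $T^-_f$ one head at a time following the BFS order, and at each step compare the conditional distribution of the newly revealed degree against the offspring distributions $\eta^{\da}$ and $\eta^{\ua}$. Fix an incomplete tree $T$ with $p=p(T)<n^{\beta/10}$ paired nodes. Enumerate the paired nodes of $T$ in BFS order as $u_{1},\dots,u_{p}$, with labels given by an ordered sequence of degrees $(k_{j},\ell_{j})_{j=1}^{p}$. Writing $[T^-_f=T]$ for the event that after $p$ pairings the exploration tree matches $T$, chain rule gives
\begin{equation*}
    \p{T^-_f=T} = \prod_{j=1}^{p} \p{(d^-(v_j),d^+(v_j))=(k_j,\ell_j),\, e^+_j\in \cU^+_{j-1} \cond \cH_{j-1}}\;,
\end{equation*}
where $\cH_{j-1}$ is the history up to step $j-1$ (for nodes of $T$ marked paired with offspring listed, the event includes ``no collision''; other cases treated separately). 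The corresponding factor in $\p{\GW_{\eta}\cong T}$ is simply $\p{\eta=(k_j,\ell_j)}=\ell_j n_{k_j,\ell_j}/m$.

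Next, I would estimate each one-step conditional probability. Conditional on $\cH_{j-1}$, the number of unpaired tails is $m-(j-1)=m-O(n^{\beta/10})$, and the number of unpaired tails belonging to still-undiscovered vertices with degree $(k,\ell)$ is $\ell n_{k,\ell}-O(n^{\beta/10})$ uniformly in $(k,\ell)$. Therefore the conditional probability of no collision and revealing degree $(k,\ell)$ equals
\begin{equation*}
    \frac{\ell n_{k,\ell}}{m}\left(1+O\!\left(\tfrac{n^{\beta/10}}{\ell n_{k,\ell}}\right)\right)\;.
\end{equation*}
When $\p{\eta=(k,\ell)}=\ell n_{k,\ell}/m\ge n^{-1+\beta}$, the relative error is $O(n^{-9\beta/10})$, so
\begin{equation*}
    \p{(d^-(v_j),d^+(v_j))=(k_j,\ell_j),\, e^+_j\in \cU^+_{j-1}\cond \cH_{j-1}} \ge (1-O(n^{-9\beta/10}))\p{\eta^{\da}(\beta)=(k_j,\ell_j)}\;,
\end{equation*}
using that $c^{\da}=1+O(n^{-1+\beta})$. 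Taking the product over the $p<n^{\beta/10}$ factors yields the lower bound in the lemma (the prefactor being $1+o(1)$). For nodes of $T$ marked \emph{active} (no children revealed), the analogous factor is trivially~$1$.

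For the upper bound I would lump together two possibilities after revealing the $j$-th head: (a) genuine offspring with $k\ge 1$, and (b) offspring encoded as $k=0$, which absorbs both the event that the new vertex has in-degree zero and the event that a collision has occurred (either a self-loop at step $j$ or $e^+_j\in \cA^+_{j-1}$). The collision probability at step $j$ is bounded by $|\cA^+_{j-1}|/(m-j+1)\le C n^{\beta/10}/m \le n^{-1+\beta/5}\ll n^{-1+\beta}$, and this additive slack is exactly what the definition of $\eta^{\ua}$ in \eqref{VMLKM} absorbs into the $k=0$ mass. Together with the same one-step estimates as above for $k\ge 1$, multiplying over $p<n^{\beta/10}$ factors yields the upper bound with prefactor $1+o(1)$.

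The main obstacle is keeping the multiplicative error $1+o(1)$ after taking a product of $p$ factors; this is where the threshold $n^{-1+\beta}$ in the definition of $\eta^{\da}$ and the bound $p<n^{\beta/10}$ are crucial, because they guarantee the per-step multiplicative error is $O(n^{-9\beta/10})$ and their product is $1+O(n^{-9\beta/10+\beta/10})=1+o(1)$. Summing $\p{T^-_f=T}$ (resp.\ its upper and lower estimates) over all $T\in\cT$ gives the inequalities in \eqref{PLUL}.
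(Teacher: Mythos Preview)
Your proof sketch is correct and follows the standard step-by-step comparison argument: expose the tree one pairing at a time, bound each conditional factor against $\p{\eta^{\da}=(k_j,\ell_j)}$ (respectively $\p{\eta^{\ua}=(k_j,\ell_j)}$), and control the accumulated multiplicative error using $p(T)<n^{\beta/10}$ together with the threshold $n^{-1+\beta}$. The paper does not actually give a proof of this lemma; it simply states that it is a straightforward adaptation of \cite[Lemma~5.3]{cai2020a} and omits the argument, so your write-up is in fact more detailed than what appears here and is fully in line with the intended method.

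Two small points worth tightening. First, in the upper bound for $k_j=0$ you should be explicit that a collision also fixes the mark $\ell_j$ (namely $d^+_{v(e_j^+)}$), so the relevant event is ``collision \emph{and} mark equals $\ell_j$''; its probability is still at most the total collision probability $O(n^{-1+\beta/10})$, which is indeed dominated by the added mass $n^{-1+\beta}$ at $(0,\ell_j)$ in $\eta^{\ua}$. Second, your final error computation reads $1+O(n^{-9\beta/10+\beta/10})$; the exponent is $-8\beta/10$, which is what you want, but it is worth writing it that way to avoid confusion.
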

\begin{proof}

\new{We first prove the lower bound. Our strategy is to couple the exploration process with an artificial process, with failing probability $O(n^{2\beta-3/2})$, and then analyse the latter process to show that a given tree $T\in \cT$ is produced with probability comparable to the probability with which a branching process with distribution $\eta$ produces it.}
  
\new{The artificial exploration process is defined analogously as the exploration process introduced in the previous section with one modification: instead of constructing a matching between heads and tails, it produces a number of non-necessarily disjoint pairs of them. For $k,\ell\geq 0$, we say that $(k,\ell)$ is a \emph{rare} degree pair if $\p{\eta=(k,\ell)}\leq n^{-1/2}$. Let $\cR^\pm$ be the set of heads/tails whose end-points have a rare degree pair.
}

\new{The artificial process is defined analogously as the exploration process.
Starting as in there, we set $i=1$ and proceed as follows:
\begin{enumerate}
 \item[(i$^*$)] Let \(e_{i}^{-}\) be one of the heads which became active earliest in \(\cA_{i-1}^{-}\).
    \item[(ii$^*$)] Construct the pair \((e_{i}^{-},e_{i}^{+})\) where $e_i^{+}$ is chosen uniformly at random from $(\cE^+\setminus \cP_{i-1}^+) \cup \cR^+$. Let \(v_{i} = v(e_{i}^{+})\) and \(\cP_{i}^{\pm} =
        \cP_{i-1}^{\pm} \cup \{e^{\pm}_{i}\}\).
    \item[(iii$^*$)] Update the active sets as follows:
\begin{itemize}
\item[(a)] If \(e^{+}_{i} \in \cA_{i-1}^{+} \cup (\cP_{i-1}^+ \cap \cR^+)\),
        then \(\cA_{i}^{\pm} = \cA_{i-1}^{\pm}\setminus \{e_i^\pm\}\).
\item[(b)] If \(e_i^{+}\in \cU^{+}_{i-1}\), then \(\cA_{i}^{\pm} = (\cA_{i-1}^{\pm} \cup \cE^{\pm}(v_{i})) \setminus
        \{e^{\pm}_{i}\}\).        
\end{itemize}
\item[(iv$^*$)] If \(\cA_{i}^{-}\!\!=\emptyset\), terminate; otherwise, let \(\cU_i^{\pm}\! =\!\cE^{\pm}\setminus (\cA^{\pm}_{i} \cup\cP^{\pm}_{i})\), set \(i=i+1\) and go to (i).
\end{enumerate}
}

\new{In words, the artificial process allows the tails in rare degree vertices to be paired multiple times, while the tails in degrees that are not rare can be paired only once.}

\new{Consider the sequence of trees $(T_f^*(i))_{i\geq 0}$ constructed from the artificial process as $(T_f^-(i))_{i\geq 0}$ was constructed from the exploration process, with the following difference: if $e_i^+\in \cP_{i-1}^+ \cap\cR^+$, then \(T^*_{f}(i)\) is obtained from  \(T^*_{f}(i-1)\) by adding \(\abs{\cE^{-}(v_{i})}\) children to the node corresponding to \(e_{i}^{-}\) (similarly as we did when $e_i^+\in \cU^+_{i-1}$) and considering this node as paired with mark $\abs{\cE^{+}(v_{i})}$.
}

\new{There is a natural coupling between $(T_f^-(i))_{i\geq 0}$ and $(T_f^*(i))_{i\geq 0}$ which fails at the first step $j\geq 1$ such that $e^+_j\in \cP_{j-1}^+\cap \cR^+$. For this to happen, at step $j$ there must be a tail of a rare degree vertex that has already been paired, i.e. a step $j'< j$ such that $e^+_{j'}\in \cR^+$. As the maximum degree is bounded, $|\cR^+|= O(\sqrt{n})$. Therefore, if $i\leq m/2$, the probability that the coupling fails before step $i$ is
\begin{equation}\label{MDYE}
\begin{aligned}
\p{ (T_f^-(j))_{i\geq j\geq 0}\neq (T_f^*(j))_{i\geq j\geq 0}} &\leq 
\sum_{j=1}^i \frac{|\cP_{j-1}^+|}{|(\cE^+\setminus \cP_{j-1}^+) \cup \cR^+|}\sum_{j'=1}^{j-1} \frac{|\cR^+|}{|(\cE^+\setminus \cP_{j'-1}^+) \cup \cR^+|} \\
&= O(1) \sum_{j=1}^i \frac{j|\cR^+|}{m^2} = O(i^2 n^{-3/2}).
\end{aligned}
\end{equation}
Fix $T\in \cT$. We proceed to compare the probabilities that $T$ is generated by the artificial process and by a branching process with distribution $\eta$.} 

\new{For $i\in [p(T)]$, let \(k_i\) be the number of children of the \(i\)-th node in \(T\) in the BFS order, and \(\ell_i\) its mark. Let \(K_{i}\) be the number of children of $e_i^-$ in $T_f^*(i)$ and let $L_i= |\cE^{+}(v_i)|$ be the mark given to $e_i^-$ in $T_f^*(i)$.
Let \(E_{i} =\cap_{j = 1}^{i} \{K_{j} = k_{j}, L_{j} = \ell_{j}\}\). Write $q_{k,\ell}=\p{\eta = (k, \ell)}$ and let \(q_{k,\ell}(i)\coloneqq\p{K_i=k,L_i=\ell\mid E_{i-1}}\).}

\new{If $(k,\ell)$ is rare, then at step (ii$^*$) all tails in $\cR^+$ are candidates for $e_i^+$ and thus,
\begin{equation}
        q_{k,\ell}(i) = \frac{\ell n_{k,\ell}
         }{|(\cE^+\setminus \cP_{i-1}^+) \cup \cR^+|}
        \geq 
        q_{k,\ell}.
    \end{equation}
If $(k,\ell)$ is not rare, then
   \begin{equation}
        q_{k,\ell}(i) \geq \frac{\ell n_{k,\ell} - i M
         }{m}
        = 
        q_{k,\ell}-O(n^{\beta-1})
        =
        (1+O(n^{\beta-1/2})) q_{k,\ell},
    \end{equation}
    where the third step uses that $q_{k,\ell}> n^{-1/2}$. 
}

\new{    
    It follows that
    \begin{equation}\label{SDKS}
        \p{T_f^*(p(T)) \cong T} 
        = \prod_{i=1}^{p(T)} q_{k_i,\ell_i}(i)
        \geq \prod_{i=1}^{p(T)} (1+O(n^{\beta-1/2})) q_{k_i,\ell_i}
        = (1+o(1))\p{\GW_{\eta}\cong T}.
    \end{equation}
    since $2\beta<1/2$.	By adding~\eqref{SDKS} over all $T\in \cT$, we obtain
        \begin{equation}\label{SDLS}
        \p{T_f^* \in  \cT} 
        \geq (1+o(1))\p{\GW_{\eta}\in \cT}.
    \end{equation}
    The lower bound on~\eqref{PLUL} follows from using the previous equation and~\eqref{MDYE} with $i=p(T)\leq n^\beta$,
    \begin{equation}\label{ODKS}
    \begin{aligned}
        \p{T_f^-\in \cT} &\geq  \p{T_f^* \in \cT} - \p{ (T_f^-(j))_{n^{\beta}\geq j\geq 0}\neq (T_f^*(j))_{n^{\beta}\geq j\geq 0}}\\
        &= (1+o(1))\p{\GW_{\eta}\in \cT} + O(n^{2\beta-3/2}). 
    \end{aligned}
    \end{equation}
    }
    Let us now prove the upper bound. Write $q_{k,\ell}^\ua=\p{\eta^\ua = (k, \ell)}$.
    For all \(k\geq 1\), \(\ell \ge 0\) and $i\in [p(T)]$
    \begin{align}\label{eq:rel_i_pnl}
        q_{k,\ell}(i) 
        \leq\frac{\ell \, n_{k,\ell}}{m - (i-1)} 
        =  (1+O(n^{\beta-1})) q_{k,\ell}^{} 
        =  (1+O(n^{-1/2})) q_{k,\ell}^{\ua}
        . 
    \end{align}
    Also, for all $\ell\geq 0$ and $i\in [p(T)]$
    \begin{align}\label{eq:rel_i_pn0}
        q_{0,\ell}(i) 
        \leq
        \frac{\ell\, n_{0,\ell} + p(T)M}{m - (i-1)} 
        =
        (1+O(n^{\beta-1/2}))q_{0,\ell}^{}
        \leq      
        (1+O(n^{\beta-1/2}))q_{0,\ell}^{\ua} 
        ,
    \end{align}
    where the second step uses that \(q_{0,\ell}^{\ua} \geq  n^{-1/2}\). The rest of the argument is analogous to the lower bound.
\end{proof}

\section{Stationary distributions}\label{sec:pi:min}

We will proceed to prove~\autoref{thm:main} as in~\cite{caputo2020a}, using
the results obtained in previous sections and some of our results in~\cite{cai2020, cai2020a}. In these papers, it was  assumed that \(D_{n}=D\), the random vector of the in- and out-degree of a uniform random vertex, converges in distribution as \(n \to \infty\). This is not implied by our assumptions. However, having bounded in- and out-degrees implies that $D_n$ is tight and by Prokhorov's theorem, we can extract a subsequence \( (a_{n})_{n \ge 1}\) such that \(D_{a_{n}}\) converges in distribution. Thus we can
    still apply the results in~\cite{cai2020, cai2020a} in the context of this paper.

Throughout the section, all the asymptotic notation must be understood as $n\to\infty$. We also assume that $n$ is sufficiently large in some of the inequalities displayed.

\subsection{The largest strongly connected component}\label{sec:SCC}

For $\delta^+\geq 2$, whp there is a linear size strongly connected component in
\(\vecGn\)~\cite{cai2020,cooper2004}. We will first show that whp this component is \new{the only one that is closed}, which implies that the simple random walk on $\vecGn$ has a unique stationary
distribution whp.

\new{Recall the definition of distance between half-edges given in~\autoref{sec:explore}.}
For a head/tail $e^{\pm} \in \cE^{\pm}$ and $k\geq 0$, define
\begin{equation}\label{AQIR}
    \begin{aligned}
    \cN^{+}_{k}(e^{+}) 
    &
    \coloneqq
    \left\{ 
        f^{+} \in \cE^{+}
        :
        \dist(e^{+}, f^{+}) = k
    \right\}
    ,
    \qquad
    &
    \cN^{-}_{k}(e^{-}) 
    \coloneqq
    \left\{ 
        f^{-} \in \cE^{-}
        :
        \dist(f^{-}, e^{-}) = k
    \right\}
    ,
    \\
    \cN^{+}_{\le k}(e^{+}) 
    &
    \coloneqq
    \left\{ 
        f^{+} \in \cE^{+}
        :
        \dist(e^{+}, f^{+}) \le k
    \right\}
    ,
    \qquad
    &
    \cN^{-}_{\le k}(e^{-}) 
    \coloneqq
    \left\{ 
        f^{-} \in \cE^{-}
        :
        \dist(f^{-}, e^{-}) \le k
    \right\}
    .
    \end{aligned}
\end{equation}
Similarly, for a
vertex \(u \in [n]\), let $\cN^\pm_k(u)$ and $\cN^\pm_{\le k}(u)$ be the sets of vertices at distance
$k$ and at most \(k\) from/to $u$, respectively.

Throughout this section, we set 
\begin{align}\label{eq:omega}
\omega\coloneqq \log^6 n.
\end{align}

For every head/tail $e^\pm \in \cE^\pm$, consider the
random variable
\begin{equation}\label{YBTI}
t_{\omega}^{\pm}(e^{\pm}) \coloneqq \inf\{t\geq 0:\,\abs{\cN^{\pm}_t(e^{\pm})} \geq \omega\};
\end{equation}
that is, the smallest distance $t$ for which there are at least $\omega$ half-edges in $t$-th in-/out-neighbourhood of $e^\pm$. 
\begin{prop}\label{prop:strongly}
    Let $M\in\mathbb{N}$ and suppose that $\delta^+\geq 2$ and $\Delta^{\pm} \leq M$.
    Let $\cC_{0}$ denote a largest strongly connected component in $\vecGn$.
    Let    
    \begin{equation}\label{NVEU2}
         \cE^-_{0}
        \coloneqq
        \{f\in \cE^-: t^-_\omega(f)<\infty\}
        .
    \end{equation}    
    Let \(E_{0}\) be the event that \(\cC_{0}\) is \new{the only closed strongly connected component} and has vertex set $\cV_0=\cV(\cE_0^-)$.  
    Then \(\p{E_{0}} = 1 -
    o(1)\).  Thus, whp the simple random walk on $\vecGn$ has a unique stationary distribution
    supported on $\cV_0$.
\end{prop}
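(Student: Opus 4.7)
The plan is to combine the branching-process coupling of Section~\ref{sec:graph} with a sprinkling argument to establish three intermediate facts, each holding whp, that together imply the statement. Namely, setting $K := n^{3/4}$:

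\textbf{Fact 1 (forward reachability).} Every tail has a large forward neighbourhood, i.e., $\cE_0^+ := \{e^+ \in \cE^+ : t_\omega^+(e^+) < \infty\} = \cE^+$.

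\textbf{Fact 2 (linear backward core).} $|\cV(\cE_0^-)| = \Theta(n)$.

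\textbf{Fact 3 (sprinkling).} For every pair $(e^+, e^-) \in \cE^+ \times \cE_0^-$, there is a directed path in $\vecGn$ from $v(e^+)$ to $v(e^-)$.

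For Fact~1, since $\delta^+ \geq 2$, the out-BFS from any vertex stochastically dominates a branching process whose offspring has support in $\{2, \dots, M\}$, hence doubles in size each generation. After $O(\log \omega)$ generations --- i.e., at most $O(\omega)$ pairings --- the out-BFS reaches size $\geq 2\omega$ provided the number of collisions stays below, say, $\omega/2$. Since each collision occurs with probability $O(\omega/n)$ per pairing, a Chernoff bound gives a per-vertex failure probability of $n^{-\omega(1)}$; a union bound over $[n]$ yields Fact~1. Fact~2 is implicit in the linear SCC result of \cite{cai2020, cooper2004}; alternatively, it follows by applying \autoref{lem:eq_T_GW} to the in-exploration of a uniform random head, which survives with probability $s^- + o(1) > 0$, and concentrating by Chebyshev.

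For Fact~3, continue the out-BFS from $e^+$ and the in-BFS from $e^-$ past time $t_\omega^\pm$ until each reaches size $K$. Using the coupling with a supercritical branching process (expected growth rate $\nu > 1$) together with standard martingale estimates on collisions, both explorations reach size $K$ with conditional failure probability $e^{-\Omega(K)}$. At this point, expose the in-BFS pairings first, then run the out-BFS on the remaining matching. At each step of the out-BFS, the paired head lies in the revealed in-tree of $e^-$ with probability at least $K/(2m)$, so
\begin{equation*}
    \bP[\text{out-BFS never meets in-tree of } e^-] \leq \left(1 - \frac{K}{2m}\right)^{K} \leq e^{-K^2/(2m)} = e^{-\Omega(n^{1/2})},
\end{equation*}
and a union bound over the at most $m^2$ relevant pairs yields Fact~3. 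Combining the three facts: Fact~3 applied to any $e^+ \in \cE^+(v)$ with $v \in \cV_0$ and any $e^- \in \cE_0^-$ shows that $\cV_0$ is strongly connected; Fact~3 applied to arbitrary $e^+$ shows $\cV_0$ is attractive; and Fact~2 gives that $\cV_0$ has linear size, so it must coincide with $\cV(\cC_0)$. Uniqueness of the stationary distribution then follows from standard Markov chain theory, since an attractive SCC in a finite directed graph is unique and equal to the set of positive recurrent states.

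The main obstacle is controlling the growth of the two BFSes past $t_\omega^\pm$ in Fact~3 while the matching is gradually revealed; this requires that the branching-process coupling of \autoref{sec:coupling} remains valid after a $o(m)$-fraction of the half-edges have been paired in a dependent way. A careful tracking of the empirical degree distribution of the remaining unpaired half-edges --- which stays close to the original one to leading order --- provides the necessary uniform control, at the cost of multiplicative factors $1+o(1)$ in the coupling probabilities, analogous to the use of $\eta^{\da}$ and $\eta^{\ua}$ in \eqref{TPGU}--\eqref{VMLKM}.
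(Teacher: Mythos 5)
The three-fact decomposition you propose (all tails expand, a linear set of heads expands, and a sprinkling argument to connect them) is in spirit the same strategy as the paper, except that the paper outsources the heavy lifting to prior results: Lemma~2.2 of Caputo--Quattropani~\cite{caputo2020a} for your Fact~1, the known linear-\scc{} result for Fact~2, and Proposition~7.2 of~\cite{cai2020a} for Fact~3 (the ``sprinkling'' step). Your sketch rebuilds the sprinkling step from scratch, which is legitimate, though the claimed conditional failure probability $e^{-\Omega(K)}$ for growing past $t_{\omega}^{\pm}$ is too strong: conditionally on having reached size $\omega$, the dominant failure mode is an early collapse, giving a rate of order $e^{-\Omega(\omega)}$, not $e^{-\Omega(K)}$. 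This is still super-polynomially small and the union bound over $m^{2}$ pairs survives, so this is a presentational rather than substantive issue.

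There is, however, a genuine gap in the final step. You correctly establish that $\cV_{0}$ is strongly connected, that it is attractive, and that it has linear size; together these give $\cV_{0}\subseteq \cV(\cC_{0})$. But the assertion ``$\cV_{0}$ has linear size, so it must coincide with $\cV(\cC_{0})$'' does not follow: nothing rules out the existence of vertices in $\cC_{0}$ that lie outside $\cV_{0}$. To conclude $\cV(\cC_{0})\subseteq\cV_{0}$ you must show that every $x\notin\cV_{0}$ is \emph{not} in the giant \scc{}. Concretely, if $x\notin\cV_{0}$ then every head of $x$ has an in-BFS that never reaches size $\omega$; one must argue that the set of vertices which can reach $x$ is therefore $o(n)$, so that $x$ cannot be in an \scc{} containing all of $\cV_{0}=\Theta(n)$ vertices. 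The paper handles exactly this via \cite[Proposition 6.1]{cai2020a} applied with $t=n/\omega$ to exhibit, for each $x\notin\cV_{0}$, a vertex $y_{x}$ that $x$ cannot reach, forcing $x\notin\cC_{0}$. Without this step, only the attractiveness of $\cC_{0}$ (and hence uniqueness of the stationary measure) is established, not the identification of its vertex set as $\cV_{0}$, which is part of the statement of \autoref{prop:strongly} and is used downstream (e.g.\ in \autoref{lem:piminez}).
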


\begin{proof}
	\new{The proposition can be easily proved combining some results existing in the literature. Here we do it, skipping some straightforward computations.}
    Let $h\coloneqq 1+\log_{2} (2\omega)= O(\log\log n)$. 
    Then, 
    \begin{equation}\label{QETJ}
        \begin{aligned}
            \p{\cup_{e \in \cE^{+}} \{t_{\omega}^{+}(e) > h\}}
            &
            \le
            \p{\cup_{v \in [n]} \left\{|\cN^+_{h-1}(v)|< \omega  \right\}}
            \\
            &
            \le
            \p{\cup_{v \in [n]} \left\{|\cN^+_{h-1}(v)|< \frac{1}{2}(\delta^+)^{h-1} \right\}}
            ,
        \end{aligned}
    \end{equation}
    where we used $|\cN^+_h(e)|= |\cN^+_{h-1}(v)|$, where $v$ is the vertex incident to the head paired with $e$. It follows from~\cite[Lemma 2.2]{caputo2020a} that the probability in~\eqref{QETJ} is $o(1/n)$.
   So, \(t^{+}_{\omega}(e) \leq  h\) for all \(e \in \cE^+\) whp.
   

    By~\cite[Lemma~6.2]{cai2020a} whp {every} \(f\in \cE^-\) either has \(t_{\omega}^{-}(f)=\infty\) or
    \(t_{\omega}^{-}(f)=O(\log n)\).  Conditioning on \(t_{\omega}^{+}(e) = O(\log \log n)\) and
    \(t_{\omega}^{-}(f)=O(\log n)\),~\cite[Proposition~7.2]{cai2020a} implies that there is a path
    from \(e\) to \(f\) with probability \(1-o(n^{-2})\). Thus, by a union bound over all
    choices of \(e\) and \(f\), we have that whp there is path from every  \(e \in
    \cE^{+}\) to every
    \(f\in \cE_0^-\); and in particular, from every $u\in[n]$ to every $v\in \cV_0$. In other words, whp \(\cC_{0}\) is the only closed strongly connected component and contains all vertices in $\cV_0$.
    
    \new{It remains to show that there are no more vertices in the component. For this purpose, we can use~\cite[Proposition 6.1]{cai2020a} which states that there exists $\alpha>0$ depending on the degree sequence, such that the probability of the event $E_f=\cap_{k\leq t}\{0<|\cN^{-}_{k}(f)|<\omega\}$ is at most $e^{-\alpha t}$, for $t=\Theta(\log{n})$. Choose $\beta>0$ satisfying $\alpha\beta>1$. Set $t=\beta \log{n}$ so the previous probability is $o(1/n)$ and, by a union bound, the event $\cap_{f\in \cE^-} E_f^c$ holds whp. If $t^-_\omega(f)=\infty$ for $f\in \cE^-$, the previous event implies that $|\cN^{-}_{t}(f)|=0$ and thus the number of heads from where $f$ can be reached is $O(\omega\log n)=o(n)$ and $f$ is not in $\cC_0$. It follows that whp $\cC_0$ has vertex set $\cV_0$.}
\end{proof}

\subsection{Random walk on in-half-edges (heads)}\label{sec:RW_heads}

It will be more convenient to think about the random walk as moving from head to head instead of from vertex to vertex. \new{We remark that the random walk is still the same, we simply track the heads that it traverses instead of the vertices it goes through}. So consider the
random process \(\walke\) {with state space \(\cE^{-}\) and, conditioning on \(\vecGn\) and
\(Z_{t}^{\bfe} = f\), let \(Z_{t+1}^{\bfe}\) be chosen uniformly at random among all heads
paired with tails of \(v(f)\), the endpoint of \(f\). That is, the random walk moves from head to head in the forward direction and the original walk can be recovered as $Z_t=v(Z_t^{\mathbf{e}})$. It follows from~\autoref{prop:strongly}
that \(\walke\) also has a unique stationary distribution supported on \(\cE^-_0\), denoted by
\(\pi^{\bfe}\).

Recall that $\pimin \coloneqq \min\{\pi(v):\, v\in [n],  \pi(v)>0\}$.  Let $\pimine \coloneqq
\min\{\pi^{\bfe}(f):\, f\in \cE^-, \pi^{\bfe}(f)>0\}$. Define 
\begin{equation}\label{MFOD}
    \pi_{0} \coloneqq \min\{\pi(v):\, v \in \cV_0\}
    ,
    \qquad
    \pi_{0}^{\bfe} \coloneqq \min\{\pi^{\bfe}(f):\, f \in \cE_{0}^{-}\}
    .
\end{equation}
\autoref{prop:strongly} also implies that whp $\pi_{0}=\pimin$ and \(\pi_{0}^{\bfe} = \pimine\).  The
following lemma shows that whp \(\pi_{0}^{\bfe}\) differs from \(\pi_{0}\) by a bounded
factor.  Thus, it suffices to prove~\autoref{thm:main} for $\pi_{0}^{\bfe}$. 

\begin{lemma}\label{lem:piminez}
Under the hypotheses of~\autoref{prop:strongly}, whp 
	\begin{equation}
	\pi_{0}^{\bfe} \le \pi_{0} \le M \pi_{0}^{\bfe}.
	\end{equation}
\end{lemma}

\begin{proof}
\new{
    Let \(\cV_{0}\), \(\cE_{0}^-\) and \(E_{0}\) be as in~\autoref{prop:strongly}. Since \(E_{0}\) holds
    whp, it suffices to prove the lemma conditioned on \(\vecGn \in E_{0}\). So let $\pi$ be the unique stationary distribution of $(Z_t)_{t\geq 0}$.}

\new{    If $v\in \cV_0$, then there exists $u\in \cV_0$ with $(u,v)\in E(\vecGn)$. Let $f_0\in \cE^-(v)$ be a head paired with a tail in $\cE^+(u)$, in particular $f_0\in \cE_0^-$. By the uniqueness of $\pi$ and $\pi^{\bfe}$, we obtain
    \begin{equation}\label{KDRL}
        \pi(v)
        =
        \sum_{f\in \cE^-(v)}\pi^{\bfe}(f)
        	\geq
        \pi^{\bfe}(f_0)
        \ge 
        \piminez
        .
    \end{equation}
     Since the choice \(v \in
    \cV_{0}\) is arbitrary, it follows that \(\pi_{0}\ge \piminez\).
}

\new{
    For the other direction, we now choose \(f_0 \in \cE_{0}^{-}\) with \(\pi^{\bfe}(f_0) = \pi_{0}^{\bfe}\).
    Let \(w\) be the vertex such that \(f_0\) is paired with a tail in $\cE^+(w)$, which in particular satisfies $w\in \cV_0$. By the stationary and uniqueness of $\pi$ and $\pi^{\bfe}$, we have
    \begin{equation}\label{POVN}
        \pi_{0}^{\bfe} = \pi^{\bfe}(f_0)
        =
        \frac{1}{d^{+}_{w}}
        \pi(w)
        \ge
        \frac{1}{M}
        \pi_{0}
        .
    \end{equation}
    Thus \(\pi_{0} \le M \piminez\).
    }
\end{proof}

\subsection{Lower bound for \texorpdfstring{$\piminez$}{Pi min zero}}
\label{sec:lower}

To prove a lower bound for $\piminez$ it suffices to understand, for each $f\in \cE^-_0$, the probability that the walk reaches $f$, uniformly for all starting points $e\in \cE^-$; see~\autoref{lem:PT} and the discussion after it. We use
the ideas introduced in~\cite{bordenave2018,bordenave2019,caputo2020a} to capture the weight of typical trajectories departing from $e$. Our main contribution lies in controlling the total weight of the trajectories landing at $f$, \new{a task that proves to be significantly more complex than in previous studies.}

Define the \emph{out-entropy} $H^+$ and the
\emph{entropic time} $\tent$ as
\begin{equation}\label{GNKU}
    H^+
    \coloneqq 
    \new{\frac{1}{\lambda}\sum_{k\geq 1, \ell \geq 0} k \log \ell \cdot \p{D=(k,\ell)}}
     \text{ and } \tent
    \coloneqq
    \frac{\log n}{H^+}.
\end{equation}
We stress the similarity between $H^+$ and $\hat{H}^-$ defined in~\eqref{SKFN}; in particular,
$H^+=\E{\log\din^+}$, where $\din^+$ is the out-degree of the in-size-biased distribution of $D$. While the subcritical in-entropy $\hat{H}^-$ relates to the weight of trajectories in atypical (subcritical) in-neighbourhoods, the out-entropy $H^+$ relates to the weight of trajectories in typical (supercritical) out-neighbourhoods.

Fix $\theta>0$ sufficiently small and let 
\begin{equation}\label{BMGX}
    h^+\coloneqq (1-\theta)\tent, 
    \qquad 
    h^-\coloneqq \frac{3\theta}{\log \delta^+}\log n
    .     
\end{equation}
For
$f\in \cE^-$,  define 
\begin{equation}\label{SOAS}
h(f) \coloneqq t^{-}_\omega(f) \wedge \omega,
\end{equation}
where $t^{-}_\omega(f)$ is as defined in~\eqref{YBTI}. Also define
\begin{equation}\label{IOWT}
    \tau(f) \coloneqq h^++h^-+h(f)
    .
\end{equation}
\begin{rem}
Bordenave, Caputo and Salez~\cite{bordenave2018,bordenave2019} showed that the mixing time of the
random walk on $\vecGn$ coincides with the \emph{entropic time} and exhibits cutoff. \new{In addition, there is a relation between the parameters considered and the distances in the random graph. First, $\tent\geq \log_{\nupm} n$, where $\nupm$ is defined in~\eqref{SPEK} (see~\cite[Eq. (3.3)]{caputo2020a}), which is whp the asymptotic value of the distance between two uniformly chosen vertices  of $\vecGn$. Second, by the results in~\cite{cai2020a}, \(\tent+\max_{f \in \cE^{-}_0} h(f)\) is whp at least
the asymptotic value of the diameter of $\vecGn$.}
\end{rem}

Throughout this section, we will use the letters $a,b$ for tails in $\cE^+$ and the letters $e,f,g$ for heads in $\cE^-$.
Define 
\begin{equation}\label{EUSL}
P^{t}(e, f)\coloneqq \p{Z^{\bfe}_{t} = f\mid Z^{\bfe}_{0}=e}.
\end{equation}   
To lower bound $\pi_0^\bfe$ it suffices to prove the following.
\begin{lemma}\label{lem:PT}
    \new{Under the hypotheses of~\autoref{prop:strongly},} for every $\varepsilon>0$, whp for all \(e \in \cE^{-}\) and \(f \in \cE_{0}^{-}\)
\begin{equation}\label{GSHI}
    P^{\tau(f)}(e,f) \ge n^{-(1+\hat{H}^-/\phi(a_{0}))-\varepsilon}
    .
\end{equation}
\end{lemma}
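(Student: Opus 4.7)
The plan is to decompose $P^{\tau(f)}(e,f)$ via the Markov property and to control the forward and backward contributions separately. By Markov,
\begin{equation*}
P^{\tau(f)}(e,f) \ge \sum_{g \in \cB(f)} P^{h^++h^-}(e,g) \cdot P^{h(f)}(g,f),
\end{equation*}
where I will take $\cB(f)$ to be the set of $\omega$ heads discovered at depth $h(f)$ of the backward BFS exploration from $f$ described in \autoref{sec:explore}. Whp this exploration is tree-like up to depth $h(f)$ (by the standard absence-of-collisions argument in sparse neighbourhoods, using that $h(f) = O(\log n)$ for $f \in \cE_0^-$ as in the proof of \autoref{prop:strongly}), so for each $g \in \cB(f)$ there is a unique path of length $h(f)$ from $g$ to $f$, and $P^{h(f)}(g,f)$ equals the weight $\Gamma_g$, i.e.\ the product of $1/d^+_v$ along the intermediate vertices on that path.

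The backward contribution is controlled directly by the subcritical branching-process machinery of \autoref{sec:branching}. By \autoref{lem:eq_T_GW}, the marked backward tree of $f$ truncated at the first time it discovers $\omega$ heads is sandwiched between two marked Galton--Watson trees with offspring distributions arbitrarily close to $\dout$ (which satisfies $\bE[\xi/\zeta]=1$ by \autoref{rem:martingale}). I will apply \autoref{cor:DWPD} with $p = n^{-(1+\varepsilon_n)}$ for some $\varepsilon_n \to 0$ slowly (say $\varepsilon_n = 1/\log\log n$), obtaining that with probability at least $1 - n^{-(1+\varepsilon_n)(1+o(1))}$ every $g \in \cB(f)$ satisfies
\begin{equation*}
\Gamma_g \ge \gamma := n^{-(1+\varepsilon_n)\hat{H}^-/\phi(a_0)} = n^{-\hat{H}^-/\phi(a_0) + o(1)}.
\end{equation*}
A union bound over the $O(n)$ choices of $f \in \cE_0^-$ makes this uniform in $f$, so that
\begin{equation*}
P^{\tau(f)}(e,f) \ge \gamma \cdot P^{h^++h^-}(e, \cB(f)).
\end{equation*}

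The main obstacle will be the forward spread estimate: proving that whp, uniformly in $e \in \cE^-$ and $f \in \cE_0^-$,
\begin{equation*}
P^{h^++h^-}(e, \cB(f)) \ge n^{-1+o(1)}.
\end{equation*}
This does not follow directly from \autoref{sec:branching}--\ref{sec:graph} and, I expect, will need a separate argument in the spirit of the entropic-cutoff analyses of~\cite{bordenave2018,bordenave2019,caputo2020a}. My plan is to couple the forward exploration from $e$ with a supercritical marked branching process (the forward analogue of the coupling in \autoref{sec:graph}): the out-entropy $H^+$ then forces the typical trajectory weight after $h^+ = (1-\varepsilon)\tent$ steps to be $e^{-h^+ H^+ + o(\log n)} = n^{-(1-\varepsilon)+o(1)}$, while the additional $h^-$ steps, which satisfy $(\delta^+)^{h^-} = n^{3\varepsilon}$, absorb the remaining $n^{-\varepsilon}$ slack and guarantee that the forward ball overlaps the small target set $\cB(f)$. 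A martingale concentration argument analogous to \autoref{prop:martingale}, but for forward weights, should make the spread statement quantitative enough to survive a union bound over all $(e,f)$.

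Combining the two estimates yields
\begin{equation*}
P^{\tau(f)}(e,f) \ge \gamma \cdot n^{-1+o(1)} = n^{-(1+\hat{H}^-/\phi(a_0) + o(1))},
\end{equation*}
as required. The backward step is essentially a direct consequence of the subcritical branching-process estimates of \autoref{sec:branching} and the coupling of \autoref{sec:graph}; the forward step is the technical heart of the argument and is where I expect most of the new work to lie, since it depends on supercritical, entropy-regulating behaviour of the walk rather than on subcritical in-growth.
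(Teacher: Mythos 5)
Your decomposition of $P^{\tau(f)}(e,f)$ at time $h^++h^-$ is genuinely different from the paper's, and the difference is where the proof lives or dies. The paper does \emph{not} prove a standalone estimate of the form $P^{h^++h^-}(e,\cB(f))\ge n^{-1+o(1)}$ for the small (size $\omega=\log^6 n$) set $\cB(f)$ of depth-$h(f)$ heads; it avoids ever having to show that the forward walk hits a specific polylogarithmic target. Instead, it grows the in-tree $T_f^-$ an extra $h^-$ generations (to depth $h^-+h(f)$), producing a set $\cG$ of unpaired heads of \emph{polynomial} size $\approx \omega\,\nu^{h^-}=n^{\Theta(\varepsilon)}$, grows the out-tree $T_e^+$ to depth $h^+$ with the nice-paths rule so that $A_{e,f}=\sum_{a\in\cA}\bfw(a)\mathbf{1}_{\bfw(a)\le n^{-1+2\varepsilon}}\ge 1/2$, and then exploits that the \emph{pairing} between the boundaries $\cA$ and $\cG$ is uniformly random conditional on the partial realisation $\sigma_0$. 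This gives $\bE[\hat P^{\tau(f)}(e,f)\mid\sigma_0]\ge \frac{1}{m}A_{e,f}B_{e,f}$ directly, and Chatterjee's inequality supplies the concentration; the Cram\'er/branching estimate of Corollary~\ref{cor:DWPD} enters only to lower bound $B_{e,f}=\sum_{g\in\cG}\hat\bfw(g)$. Note also that the weights $\hat\bfw(g)$ are \emph{truncated} at $\gamma$ in the last $h(f)$ levels; together with the extra $h^-$ levels, this caps $\|c\|_\infty\le \gamma n^{-(1+\varepsilon)}$ and is essential for the variance proxy in Lemma~\ref{lem:concentration}. None of this machinery appears in your sketch.

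The concrete gap is your claimed ``forward spread estimate.'' As you yourself note, it is the technical heart, but your plan for it — couple the out-exploration with a supercritical marked branching process and run a martingale argument analogous to Proposition~\ref{prop:martingale} — does not address the actual difficulty. The difficulty is not that the forward weight process fails to be concentrated; it is that $\cB(f)$ is a specific, graph-dependent set of only $\omega$ heads, and you need the $h^++h^-$-step distribution from an \emph{arbitrary} $e$ to place $n^{-1+o(1)}$ mass on it. A supercritical branching-process coupling controls the shape of the forward tree from $e$, but says nothing about its intersection with $\cB(f)$; that intersection is governed by the pairing, and with a target of size $\omega$ the paper's Chatterjee argument would produce an exponential tail with exponent $\propto \bE[\hat P\mid\sigma_0]/\|c\|_\infty\approx \omega$, which is far too weak to survive a union bound over $\Theta(n^2)$ pairs $(e,f)$. (Also beware: it is \emph{not} true that $P^{h^++h^-}(e,g)\approx 1/n$ for every head $g$ — the whole point of the theorem is that some heads have stationary mass as small as $n^{-(1+\hat H^-/\phi(a_0)+o(1))}$ — so the forward estimate cannot be deduced from mixing; you would have to use that the heads in $\cB(f)$ are ``typical,'' which again requires a separate argument.) In short, the backward half of your proposal is fine and uses Corollary~\ref{cor:DWPD} the same way the paper does, but the forward half requires exactly the two ideas the paper introduces and your sketch omits: the $h^-$ buffer that makes the target polynomially large, and the truncated-weight / Chatterjee concentration scheme.
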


Indeed, assuming~\autoref{lem:PT} and by stationarity, whp for all \(f\in \cE^{-}_{0}\) we have
\begin{align}\label{EODJ}
\pi^{\bfe}(f) = \sum_{e\in \cE^-} \pi^\bfe(e) P^{\tau({f})}(e,f)\geq
    n^{-(1+\hat{H}^-/\phi(a_{0}))-\varepsilon} ,
\end{align}
and, by~\autoref{prop:strongly} and~\autoref{lem:piminez}, the lower bound in~\autoref{thm:main} follows.

The rest of this subsection is devoted to the proof of~\autoref{lem:PT}; here we give an outline of it.
From now on, we fix two heads $e\in \cE^-$ and $f\in\cE^-_0$.
First, we expose the out-neighbourhood of $e$ up to distance $h^+$ (\emph{out-phase}) using a modified exploration process, which is described in~\autoref{sec:out-neigh}. Second, we expose the in-neighbourhood of $f$ up to distance $h^-+h(f)$ (\emph{in-phase}) using the exploration process described in~\autoref{sec:explore}. This is done in two steps, first exposing the half-edges from which $f$ is at distance at most $h(f)$, and later the remaining ones; see~\autoref{sec:in-neigh}. In these first two sections, we also define the notion of weight for tails and heads.
A lower bound for $P^{\tau(f)}(e,f)$ is established in~\autoref{sec:connect} by considering a random sum of weights, see~\eqref{JVQW}. By restricting ourselves on partial pairings exposed by the out- and in-phases that are ``good'' in a certain sense (see~\eqref{QHKO}), we show that the number of edges from the $(h^+)$-th out-neighbourhood of $e$ to the $(h^-+h(f))$-th in-neighbourhood of $f$ is concentrated, which yields concentration of the random sum of weights around its expectation; see~\autoref{lem:concentration}. In~\autoref{sec:typ} we show that, indeed, most of the exploration processes produce ``good'' partial pairings; see~\autoref{lem:whp}. This is where most of the results derived in~\autoref{sec:branching} are used. Finally, we conclude the proof of~\autoref{lem:PT} in~\autoref{sec:LB_proof}.

\subsubsection{Out-neighbourhood of $e$}\label{sec:out-neigh}

In the out-phase we build a directed rooted tree $T^+_{e}$, partially exposing the out-neighbourhood
of $e$. The root of $T^+_{e}$ represents the head $e$ and all its other nodes represent tails in
$\cE^+$.  For a tail $a$ represented in $T^+_{e}$, let $\bfh(a)$ denote its height in $T^+_{e}$.
Define its \emph{weight} by
\begin{equation}\label{IBHZ}
\bfw(a) \coloneqq \prod_{i=1}^{\bfh(a)} \frac{1}{d^+_{v(a_i)}}.
\end{equation}
where $a_{1}, \dots, a_{\bfh(a)}\new{=a}$ are the tails in the path from $e$ to $a$ in
$T^+_{e}$.  \new{The weight of $a$ can be understood as the probability that a random walk starting at $e$ reaches $a$ by following the unique path in the tree $T_e^+$ that connects the two of them, so $P^{\bfh(a)}(e, g) \ge \bfw(a)$, where $g$ is the head paired with $a$.}

To construct $T^+_{e}$ we use a procedure similar to~\autoref{sec:explore}. For the sake of completeness, we present the full modified procedure. Let
\begin{equation}\label{eq:APUF2}
    \cA_{0}^{\pm}=\cE^{\pm}(v(e)),
    \cP_{0}^{\pm}=\emptyset \text{ and }
    \cU_{0}^{\pm}=\cE^{\pm} \setminus \cA^{\pm}_{0}
    .
\end{equation}
Then set \(i=1\) and proceed as follows: \leavevmode
\begin{enumerate}[\normalfont(i)]
    \item Let \(e_{i}^{+}\) be one of the tails in
        \(\cA_{i-1}^{+}\) that
    maximises $\bfw(a)$ among all $a\in \cA_{i-1}^+$ with $\bfh(a)\leq h^+-1$ and
    $\bfw(a)\geq n^{-1+\theta^2}$, \new{where $h^+$ and $\theta$ are as in~\eqref{BMGX}}.
    \item Pair \(e_{i}^{+}\) with a head \(e_{i}^{-}\) chosen uniformly at random from
        \(\cE^-\setminus \cP^-_{i-1}\).  Let \(v_{i} = v(e_{i}^{-})\) and \(\cP_{i}^{\pm} =
        \cP_{i-1}^{\pm} \cup \{e^{\pm}_{i}\}\).
    \item Update the active sets as follows:
\begin{itemize}
\item[(a)]  If \(e^{-}_{i} \in \cA_{i-1}^{-}\),
        then \(\cA_{i}^{\pm} = \cA_{i-1}^{\pm}\setminus \{e_i^\pm\}\); 
\item[(b)] If \(e_i^{-}\in
        \cU^{-}_{i-1}\), then \(\cA_{i}^{\pm} = (\cA_{i-1}^{\pm} \cup \cE^{\pm}(v_{i})) \setminus
        \{e^{\pm}_{i}\}\).
\end{itemize}        
    \item \new{If there is no \(a\in\cA_{i}^{+}\) with $\bfh(a)\leq h^+-1$ and
    $\bfw(a)\geq n^{-1+\theta^2}$}, terminate; otherwise, let \(\cU_i^{\pm}\! =\!\cE^{\pm}\setminus (\cA^{\pm}_{i} \cup
        \cP^{\pm}_{i})\), set \(i=i+1\) and go to (i).
\end{enumerate}

The two main differences with respect to the procedure defined in~\autoref{sec:explore} is that we explore the out-neighbourhood instead of the in-neighbourhood of a head, and that we use a different priority rule to select the current tail to be paired in step (i); \new{in particular, we halt the process when no tail satisfies the required conditions}. In~\cite[Lemma 7]{bordenave2019}, the authors showed that constructing
$T^+_{e}$ \emph{deterministically} pairs at most $\kappa^{+}\coloneqq n^{1-\theta^2/2}$ edges.

\new{To prove~\autoref{lem:PT}}, it will be convenient to consider trajectories that are not too heavy.  Following the ideas of Bordenave, Caputo and Salez~\cite{bordenave2018,bordenave2019} as described by Caputo and Quattropani~\cite{caputo2020a}, it suffices to only consider \emph{nice paths} in $T_{e}^{+}$, that is, we restrict our attention to the tails $a$ with $\mathbf{h}(a)=h^+$ such that $\mathbf{w}(a)\leq n^{2\theta-1}$.

%

\subsubsection{In-neighbourhood of $f$}\label{sec:in-neigh}
In the in-phase we build a directed rooted tree $T_f^-$, exposing $\cN^-_{\leq h^-+h(f)}(f)$ conditionally on $T^+_e$. We use
the exploration process defined in~\autoref{sec:explore} with one tiny modification: if $e^-_i$ has
already been paired with some $e^+$ in the exploration of \(T^+_e\), we let $e^+_i=e^+$ instead of choosing it randomly and we continue the exploration process \new{(including the exploration of the part already revealed by \(T^+_e\))}. We stop once
all heads at distance \(h^{-}+h(f)\) from $f$ have been activated and we let \(T_{f}^{-}\) be the feasible tree generated at this point.  For a head $g$ in $ T^-_{f}$, let $\bfh(g)$ be
its height in the tree, in particular, $\bfh(g)=\dist(g,f)$.  We define the \emph{weight} of $g$ by
\begin{equation}\label{TQIN}
    \bfw(g)\coloneqq \prod_{i=1}^{\bfh(g)} \frac{1}{d^+_{v(g_i)}}, 
\end{equation}
where $f=g_{0}, \dots, g_{\bfh(g)}\new{=g}$ are the heads in the path from $f$ to $g$ in
$T^-_{f}$. 
\new{The weight of $g$ can be understood as the probability that a random walk starting at $g$ reaches $f$ by following the unique path in the tree $T_f^-$ that connects the two of them}, so  $P^{\bfh(g)}(g, f) \ge \bfw(g)$. \new{Note the similarity between the weight of a head and the parameter $\Gamma_{i,t}$ defined in~\eqref{OSKE}, which can be thought as its branching process analogue as we will see later.} 

For any $\alpha>0$, we can let $\theta$ be small enough such that the number of edges exposed
in the in-phase is at most 
\begin{equation}\label{TGWY}
    \kappa^{-} \coloneqq \omega M h(f) + \new{\sum_{i=1}^{h^-}} \omega M^{i+1}= n^{3(\log{M}/\log
    \delta^{+})\theta+o(1)} = O(n^{\alpha}),
\end{equation}
\new{where we used that $|\cN^-_{h(f)}(f)|< \omega M$ and that $h(f)$ is a random variable bounded by $\omega$; see~\eqref{SOAS}.}

It will be convenient to define a truncated version of the head weights.
Consider 
\begin{align}\label{SOEF}
\gamma\coloneqq n^{-{(1+\theta)\hat{H}^-}/{\phi(a_{0})}}.
\end{align}
For $g$ in $T_f^-$ with $\bfh(g)\geq h(f)$, we define its \emph{truncated weight} by
\begin{equation}\label{FKDN}
    \hat{\bfw}(g)\coloneqq (\bfw(\hat{g})\wedge \gamma) \prod_{i=h(f)+1}^{\bfh(g)} \frac{1}{d^+_{v(g_i)}} .
\end{equation}
where $\hat{g}$ is the unique head in the path from $f$ to $g$ in $T_f^-$ with $\bfh(\hat{g})=h(f)$. In words, the truncated weight caps the contribution of the \new{first} $h(f)$ steps by $\gamma$. By the definition of \(h^{-}\) in
\eqref{BMGX}, for \(g\) with $\mathbf{h}(g)=h^-+h(f)$ we have 
\begin{align}\label{SLKD}
\hat{\bfw}(g)&\leq (\delta^+)^{-h^-}\gamma \leq \gamma n^{-3\theta}.
\end{align}

\subsubsection{Connecting the  two neighbourhoods}\label{sec:connect}
Let $\sigma_0$ be a partial realisation of the directed configuration model revealed during the out- and in-phases, and recall that at most $\kappa\coloneqq
\kappa^{+}+\kappa^{-}$ heads and tails have been paired in $\sigma_0$. Then $T_{e}^{+},
T_{f}^{-}$, $h(f)$, $\cN^-_{\leq h^-+h(f)}(f)$ and $\tau(f)$ are all measurable with respect to
$\sigma_0$. Given $\sigma_0$, let $\cA$ be the set of unpaired tails $a$ in \(T_{e}^{+}\) with
$\bfh(a)=h^+$, and let $\cG$ be the set of unpaired heads $g$ in $T_{f}^{-}$ which satisfy
$\bfh(g)=h^-+h(f)$. 
Let $\sigma$ be a complete pairing of half-edges compatible with $\sigma_0$.  Let \(\{\sigma(a)=g\}\) be the event that \(a\) is paired with \(g\) in \(\sigma\). Conditioned on $\sigma$ being compatible with $\sigma_0$ and $\{\sigma(a)=g\}$, we have $P^{\bfh(a)}(e, g) \ge \bfw(a)$. Thus, conditioned on $\sigma$ being compatible with \(\sigma_0\), we can lower bound the
desired probability as follows
\begin{equation}\label{JVQW}
P^{\tau(f)}(e,f) \geq \hat{P}^{\tau(f)}(e,f)\coloneqq \sum_{a\in \cA}\sum_{g\in \cG} \bfw(a) \mathbf{\hat
w}(g)  \mathbf{1}_{\sigma(a)=g}\mathbf{1}_{\bfw(a)\leq n^{2\theta-1}} .
\end{equation}
Consider the events
\begin{equation}\label{QHKO}
A_{e,f}=A_{e,f}(\sigma_0) \coloneqq \sum_{a\in\cA} \bfw(a) \mathbf{1}_{\bfw(a)\leq n^{2\theta-1}}
\text{ and }
G_{e,f}=G_{e,f}(\sigma_0) \coloneqq \sum_{g\in\cG}\mathbf{\hat{w}}(g)
,
\qquad
\end{equation}
which are measurable with respect to $\sigma_0$. \new{Despite the notation, observe that $A_{e,f}$ does not depend on $f$.} Consider the event 
\begin{equation}\label{DLJA}
\cX_{e,f}=
\left\{\sigma_0
    :\,
    A_{e,f}(\sigma_0)\geq \frac{1}{2}, 
    G_{e,f}(\sigma_0) \geq 
    \frac{\omega \gamma}{4}    
\right\}
.
\end{equation}
If $\sigma_0\in \cX_{e,f}$, then
\begin{align}\label{eq:FKEN}
    \E{\hat{P}^{\tau(f)}(e,f)\mid \sigma_0}\geq \frac{1}{m} A_{e,f}(\sigma_0) G_{e,f}(\sigma_0)
    \geq \frac{\omega \gamma}{8 m}
    \geq \frac{2\gamma}{n}
    .
\end{align}
We prove a concentration result similar to~\cite[Lemma 3.6]{caputo2020a}, exploiting the truncated nature of $\hat{P}^t$:
\begin{lemma}\label{lem:concentration}
For every $\sigma_0\in \cX_{e,f}$ and every $c\in (0,1)$
\begin{align*}
\p{\hat{P}^{\tau(f)}(e,f)\leq (1-c)\E{\hat{P}^{\tau(f)}(e,f)\mid \sigma_0}\mid \sigma_0}\leq
\exp(-\frac{c^2 n^{\theta}}{3} ),
\end{align*}
where $\theta$ is as in~\eqref{BMGX}.
\end{lemma}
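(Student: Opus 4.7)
The plan is to view $Y := \hat{P}^{\tau(f)}(e,f)$ as a linear functional of random matching indicators by writing
\begin{equation*}
Y = \sum_{a \in \cA} \sum_{g \in \cG} W(a,g)\, X_{a,g}, \qquad W(a,g) := \bfw(a)\,\hat{\bfw}(g)\,\mathbf{1}_{\bfw(a) \le n^{-1+2\varepsilon}}, \quad X_{a,g} := \mathbf{1}_{\sigma(a)=g}.
\end{equation*}
The coefficients $W(a,g)$ are $\sigma_0$-measurable and, by \eqref{SLKD} together with the truncation $\bfw(a) \le n^{-1+2\varepsilon}$, obey the uniform bound $W(a,g) \le \gamma n^{-1-\varepsilon}$. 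The indicators $X_{a,g}$ depend only on the completion of $\sigma_0$ into a uniformly random matching of the $m' := m-\kappa$ unpaired half-edges. Writing $\mu := \E{Y \cond \sigma_0}$, the lower bound $\mu \ge 2\gamma/n$ from \eqref{eq:FKEN} on $\cY_{e,f}$ shows that every coefficient is at most $\mu n^{-\varepsilon}/2$. This ratio between the maximum term and the conditional mean is the numerical input that drives the concentration.

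From here I would follow the martingale concentration strategy used to prove \cite[Lemma~3.6]{caputo2020a}. Enumerate the unpaired tails as $b_1,\dots,b_{m'}$, reveal the partners $\sigma(b_1),\dots,\sigma(b_{m'})$ sequentially under the uniform random matching, and let $\cF_i$ be the induced filtration. The Doob martingale $M_i := \E{Y \cond \cF_i}$ satisfies $M_0 = \mu$ and $M_{m'}=Y$. A standard exchange argument (swap $\sigma(b_i)$ with $\sigma(b_j)$ for a uniformly chosen $j > i$) controls the increments by
\begin{equation*}
|M_i - M_{i-1}| \le 2 \max_{a,g} W(a,g) \le \mu/n^{\varepsilon}.
\end{equation*}

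The crucial input is a bound on the predictable quadratic variation. Exploiting the product structure of $W(a,g) X_{a,g}$ and the identity $Y = \sum_i W(b_i,\sigma(b_i))$, the coefficient bound $W \le \gamma n^{-1-\varepsilon}$ gives, after iterating conditional expectations,
\begin{equation*}
\sum_{i=1}^{m'} \E{(M_i - M_{i-1})^2 \cond \cF_{i-1}} \le C\,\gamma n^{-1-\varepsilon}\,\mu \le C\,\mu^2 / n^\varepsilon,
\end{equation*}
for an absolute constant $C > 0$. I expect this step to be the main obstacle: a pure Azuma--Hoeffding estimate using only the bounded increments and $m' = \Theta(n)$ would produce a variance proxy of order $n \mu^2/n^{2\varepsilon}$, giving only an exponent of order $a^2 n^{2\varepsilon-1}$, which is much too weak. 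Closing the gap genuinely requires the Bernstein-type improvement coming from the fact that each summand $W(b_i,\sigma(b_i))$ is both small in supremum norm and small in conditional expectation relative to $\mu$.

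Finally, a Bernstein-type inequality for martingales with bounded increments (e.g.\ Freedman's inequality) applied to the one-sided deviation $\mu - Y \ge a\mu$ yields
\begin{equation*}
\p{Y \le (1-a)\mu \cond \sigma_0} \le \exp\Bigl(-\frac{(a\mu)^2}{2\bigl(C \mu^2/n^\varepsilon + a\mu^2/(3 n^\varepsilon)\bigr)}\Bigr) \le \exp\Bigl(-\frac{a^2 n^\varepsilon}{3}\Bigr),
\end{equation*}
where the last inequality holds uniformly in $a \in (0,1)$ once $n$ is large enough that the constants combine favourably. This is exactly the advertised estimate.
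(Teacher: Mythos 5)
Your decomposition of $\hat{P}^{\tau(f)}(e,f)$ into a sum $\sum_{a,g} W(a,g)\mathbf{1}_{\sigma(a)=g}$ and your bound $\|W\|_\infty\le\gamma n^{-1-\varepsilon}$ (combining \eqref{SLKD} with the truncation of $\bfw(a)$) are exactly what the paper also establishes in \eqref{JFSZ}--\eqref{LTXS}. You also correctly identify the crucial structural fact: the ratio $\|W\|_\infty/\mu\le n^{-\varepsilon}/2$ is what drives the concentration, and a naive Azuma--Hoeffding bound that only uses the increment bound and $m'=\Theta(n)$ terms would give an exponent of the wrong order. Where you depart from the paper is the tool used to convert this into a tail bound. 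The paper invokes the one-sided version of Chatterjee's concentration inequality for functions of a uniformly random matching (\cite[Proposition~1.1]{chatterjee2007}), which gives $\exp\bigl(-a^2\mu/(6\|c\|_\infty)\bigr)$ in one line, after which \eqref{eq:FKEN} immediately yields $\exp(-a^2 n^\varepsilon/3)$. You instead build a Doob exposure martingale over the sequential revelation of the matching and propose to use Freedman's inequality with a hand-estimated predictable quadratic variation. The strategy is in principle sound and reflects the right intuition -- Chatterjee's inequality is itself a Stein-method packaging of exactly the Bernstein-type improvement you are after -- but there are two loose ends. First, the predictable quadratic variation bound $\sum_i \E{(M_i-M_{i-1})^2\mid\cF_{i-1}}\le C\gamma n^{-1-\varepsilon}\mu$ is asserted, not proved; the increment $M_i-M_{i-1}$ of a Doob martingale over a random matching is not simply $W(b_i,\sigma(b_i))$ minus its mean, because revealing one edge also shifts the conditional law of all remaining pairings, and making the ``product structure'' argument rigorous takes real work (you acknowledge this is ``the main obstacle''). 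Second, the final numerical constant: your Freedman bound gives $\exp\bigl(-a^2 n^\varepsilon/(2C+2a/3)\bigr)$, which only dominates $\exp(-a^2 n^\varepsilon/3)$ if $2C+2a/3\le 3$, i.e.\ $C\le 7/6$ for $a$ near $1$; you cannot make this go away by taking $n$ large, so you would either need to verify $C$ is small enough or settle for a different (harmless, since any $\exp(-\Omega(n^\varepsilon))$ suffices downstream) constant. In short: the approach is a genuine alternative and the key quantitative insight is right, but it replaces a clean black-box application of Chatterjee's inequality with a martingale argument whose central estimate is left unproved.
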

\begin{proof} 
One can write $\hat{P}^{\tau(f)}(e,f)=\sum_{a\in \cE^+} w(a,\sigma(a))$ where 
\begin{equation}\label{JFSZ}
w(a,g) 
= \bfw(a) \hat{\bfw}(g)\mathbf{1}_{\bfw(a)\leq n^{2\theta-1}} \mathbf{1}_{a\in \cA,g\in \cG} .
\end{equation}
By~\eqref{SLKD}, it follows that
\begin{equation}\label{LTXS}
    \|w\|_\infty\coloneqq \max_{a\in\cE^+} w(a,\sigma(a)) \leq \gamma n^{-(1+\theta)}
    .
\end{equation}
Using the one-sided version of Chatterjee's inequality for uniformly random pairings~\cite[Proposition~1.1]{chatterjee2007}, we get that
\begin{align*}
\p{\hat{P}^{\tau(f)}(e,f)\leq (1-c)\E{\hat{P}^{\tau(f)}(e,f)\mid \sigma_0}\mid \sigma_0}\leq
\exp(-\frac{c^2\E{\hat{P}^{\tau(f)}(e,f)\mid \sigma_0}}{6\|w\|_\infty})\leq
e^{-\frac{c^2n^{\theta}}{3}} ,
\end{align*}
where we used~\eqref{eq:FKEN} in the last line.
\end{proof}

\subsubsection{Typical partial pairings}\label{sec:typ}
In this part we prove that whp the events $\cX_{e,f}$ simultaneously hold for all pairs $e,f\in \cE^-$ provided that $f\in \cE_0^-$, \new{or equivalently, that $\{t^-_\omega(f)<\infty\}$}. 

For this purpose, define the event
    \begin{equation}\label{JIZC}
        \cX=\bigcap_{e,f\in \cE^-} (\cX_{e,f}\cup \{t^-_\omega(f)=\infty\}).
    \end{equation}

\begin{lemma}\label{lem:whp} 
    We have
    \(\p{\cX}= 1-o(1).\)
\end{lemma}

In fact, we prove something slightly stronger that implies the lemma.}
Define the following events
\begin{equation}\label{KLCC}
    \cX_1 = \bigcap_{e,f\in \cE^-} 
    \left\{
        A_{e,f}\geq \frac{1}{2}
    \right\}
    ,
    \qquad
    \cX_2 = \bigcap_{e,f\in \cE^-} \Big(
    \left\{G_{e,f}\geq \frac{\omega \gamma}{4} \right\}\cup \{t^-_\omega(f)=\infty\}\Big)
    ,
\end{equation}
and note that $\cX\supseteq \cX_1\cap \cX_2$.

The vertex analogue of the event $\cX_1$ was shown to hold whp in~\cite[Lemma 3.7]{caputo2020a}. Its proof does not use any assumption on $\delta^-$ and is valid for tail-trees instead of vertex-trees.  Thus the conclusion
$\p{\cX_1}=1-o(1)$ still holds in our setting.

To prove~\autoref{lem:whp}, we are left with showing the following lemma, for which we apply some of the results derived in~\autoref{sec:branching}.
\begin{lemma}\label{lem:A2}
We have $\p{\cX_2}=1-o(1)$.
\end{lemma}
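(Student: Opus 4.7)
The plan is to reduce the statement to the truncated martingale analysis of \autoref{sec:branching} via the coupling developed in \autoref{sec:coupling}. Fix $f \in \cE^-$ and work on the event $[t_\omega^-(f) < \infty]$, so that $h(f) = t_\omega^-(f) \le \omega$ and the $h(f)$-th in-generation of $f$ contains at least $\omega$ heads. Observe that, on the event that $\mathbf{w}(\hat{g}) \ge \gamma$ for every generation-$h(f)$ head $\hat{g}$ in $T_f^-$, the truncated weights $\hat{\mathbf{w}}(g)$ defined in~\eqref{FKDN} coincide exactly with the quantities $\hat{\Gamma}_{i,t}$ of~\eqref{XYYI} for the tree $T_f^-$, with $t_0 = h(f)$, $t = h(f) + h^-$ and truncation level $\gamma$.

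First I would couple the in-phase tree $T_f^-$ with a marked Galton–Watson tree $\GW_\eta$ of offspring $\eta \eql \dout$ via \autoref{lem:eq_T_GW}. This is legitimate because the total number of paired half-edges at the end of the in-phase is at most $\kappa^+ + \kappa^- = o(n^{\beta/10})$ for a suitable choice of $\beta\in(0,1/4)$ and sufficiently small $\varepsilon$. By \autoref{rem:martingale}, $\E[\xi/\zeta] = 1$ under this choice, so $\Gamma_t$ is a martingale and the perturbed laws $\eta^{\ua}(\beta),\eta^{\da}(\beta)$ satisfy $|\E[\xi/\zeta]-1| = O(n^{-1+\beta}) \le M\omega^{-1/3}$, as needed for \autoref{prop:martingale}.

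Next, I would apply \autoref{cor:DWPD} with $p = n^{-(1+\varepsilon/2)}$, so that $\gamma_p = n^{-(1+\varepsilon/2)\hat{H}^-/\phi(a_0)} \ge \gamma$. Via the coupling this yields
\[
\p{G_{h(f)}^c(\gamma) \cap [t_\omega^-(f) < \infty]} \le n^{-(1+\varepsilon/2)+o(1)},
\]
which controls the event that some generation-$h(f)$ ancestor in $T_f^-$ has weight below $\gamma$. Conditioning on $G_{h(f)}(\gamma) \cap [|\cN_{h(f)}^-(f)| \ge \omega]$, I would then invoke \autoref{prop:martingale} with $t_0 = h(f)\le \omega$ and $t = t_0 + h^-$; since $h^- = O(\log n)$, we have $t \in [t_0, \omega^{1/4}]$ for $n$ large. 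The proposition gives
\[
\hat{\Gamma}_{h^-+h(f)} \ge \omega\gamma/2
\]
with conditional probability at least $1 - e^{-c_0\omega^{1/3}} = 1 - e^{-c_0(\log n)^2}$. Translating back via the coupling, the corresponding graph quantity $B_{e,f}$ satisfies $B_{e,f} \ge \omega\gamma/4$, absorbing the $1+o(1)$ coupling error and any minor perturbation coming from the prior out-phase exposure into the factor $\tfrac12 \to \tfrac14$.

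To conclude $\p{\cA_2^c} = o(1)$, I would combine the above with a union bound. The crucial point is that $G_{h(f)}(\gamma)$ is an event on $T_f^-$ whose distributional bound is uniform in the out-phase history, so its failure probability sums to $n \cdot n^{-(1+\varepsilon/2)+o(1)} = o(1)$ over $f \in \cE^-$; the martingale concentration failure $e^{-c_0(\log n)^2}$ is negligible against the $n^2$ pairs $(e,f)$. The main obstacle is the second step: one has to verify that, even when $T_f^-$ is built conditionally on the prior exposure of $T_e^+$, the bounds on $G_{h(f)}(\gamma)$ and the truncated martingale can be applied uniformly over $e$; this is handled by comparing $T_f^-$ with a ``clean'' in-tree exposed in isolation and noting that the $O(\kappa^+)$ half-edges revealed during the out-phase can modify at most $o(|\cG|)$ heads in $\cG$, each contributing at most $\gamma n^{-3\varepsilon}$ by~\eqref{SLKD}, so the difference $|B_{e,f} - B_f^{\mathrm{clean}}|$ is negligible compared to $\omega\gamma/4$.
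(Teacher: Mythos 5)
Your overall architecture matches the paper's: truncate the martingale via \autoref{cor:DWPD}, grow it with \autoref{prop:martingale}, and transfer to the graph via \autoref{lem:eq_T_GW}. However, there are two genuine problems.

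First, your coupling step is not legitimate as stated. You invoke \autoref{lem:eq_T_GW} for $T^-_f$ claiming that the number of paired half-edges is $\kappa^+ + \kappa^- = o(n^{\beta/10})$, but $\kappa^+ = n^{1-\varepsilon^2/2}$, which is vastly larger than $n^{\beta/10}$ for any $\beta < 1/4$. The hypothesis $p(T) < n^{\beta/10}$ in \autoref{lem:eq_T_GW} is therefore violated whenever $T^-_f$ is constructed conditionally on the out-phase exposure. The paper resolves this by swapping the order of the phases: it runs the \emph{in-phase first, unconditionally}, so that only $\kappa^- = O(n^{\alpha})$ pairings are revealed, allowing the coupling with a Galton--Watson tree of offspring $\eta^{\ua}$ to establish $\hat\Gamma^\bfT_h(f) \ge \omega\gamma/2$ whp. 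Only afterwards does it perform the out-phase and compare $B_{e,f}$ with the ``clean'' $\hat\Gamma^\bfT_h(f)$.

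Second, your treatment of the out-phase interference in the final paragraph is too crude to close the argument. Bounding the removed mass by ``$o(|\cG|)$ heads, each contributing at most $\gamma n^{-3\varepsilon}$'' gives $o(|\cG|)\cdot \gamma n^{-3\varepsilon}$. Since $|\cG|$ can be as large as $\omega(\Delta^-)^{h^-+1}=n^{3\varepsilon\log\Delta^-/\log\delta^+ +o(1)}$, this is $\gamma\, n^{3\varepsilon(\log\Delta^-/\log\delta^+ - 1)+o(1)}$ up to the $o(1)$ factor; whenever $\Delta^->\delta^+$ this exponent is a fixed positive multiple of $\varepsilon$, so the bound is polynomially larger than $\omega\gamma/4$ and the claim fails. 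The correct argument does not count affected heads: one shows $\E{B_{e,f}\mid\sigma^-}=(1+o(1))\hat\Gamma^\bfT_h(f)$ (the out-phase removes only a fraction $\kappa^+/m = n^{-\varepsilon^2/2}$ of the mass in expectation) and then concentrates $B_{e,f}$ around this mean via Azuma's inequality, using the pointwise bound $\hat\bfw(g)\le\gamma n^{-3\varepsilon}$ from~\eqref{SLKD} as the bounded-differences constant. Without that concentration step your proof has a real gap.
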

\begin{proof}
    In order to compute the probability of
    \(\cX_{2}\), it will be convenient to swap the order of the phases: we first run the in-phase
    unconditionally, and then the out-phase.  Write $h\coloneqq h(f)+h^-$. Define
\begin{align}\label{EKDJ}
\hat{\Gamma}^{-}_h(f) \coloneqq \sum_{g\in \cN^-_h(f)}\hat{\bfw}(g),
\end{align}
\new{where $\hat{\bfw}(g)$ is defined as in~\eqref{FKDN}.}

The difference between \(\hat{\Gamma}^{-}_{h}(f)\)
and \(G_{e,f}\), defined in~\eqref{QHKO},  is that the latter does not include the weight of heads
that are paired in the out-phase, so $\hat{\Gamma}^{-}_{h}(f)\geq G_{e,f}$. Our strategy consists on showing that (1) after the in-phase, \(\hat{\Gamma}^{-}_{h}(f)\) is large, and (2) after the out-phase, \(G_{e,f}\) is well-approximated by \(\hat{\Gamma}^{-}_{h}(f)\).

Let us proceed by showing (1). Consider the  distribution $\eta^{\ua}$, where  \(\eta=\dout\); \new{see~\eqref{VMLKM}}.   Recall the definition of \(\hnu^{\ua}\), \(\hat{H}^{\ua}\), \(\phi^{\ua}(a)\) and \(a_{0}^{\ua}\)  in~\autoref{rem:cont_parameters}.
Define
$\gamma^{\ua}\coloneqq n^{-{(1+\theta/2)\hat{H}^{\ua}}/{\phi^{\ua}(a_{0}^{\ua})}}$, for $\theta>0$ chosen previously.

\new{Consider the events 
\begin{equation}\label{QPVX}
E^
f_1\coloneqq \{\hat{\Gamma}^{-}_h(f)<\omega \gamma/2\}\text{ and }E^
f_2\coloneqq \{t^-_\omega(f)\leq\omega\},
\end{equation}
defined in the probability space of the directed configuration model $\vecGn$.}

Consider a marked branching process $(X^\ua_t)_{t\geq 0}$ with distribution $\eta^\ua$ and recall the definitions of $t_{\omega}$ in~\eqref{SOEP} and of $\hat{\Gamma}_t$
in~\eqref{XYYI}. Consider the events  
\begin{equation}\label{SPVX}
E_{1}^{\ua} \coloneqq
\{\hat{\Gamma}_{t_\omega+h^-}(t_\omega,\gamma^{\ua})<\omega\gamma^{\ua}/2\} \text{ and }E_{2}^\ua \coloneqq \{t_{\omega} \leq \omega\},
\end{equation}
\new{defined in the probability space of the marked branching process. We will abuse notation and use $\p{\cdot}$ for the probability in both probability spaces.}

\new{Recall that $\eta^\ua$ satisfies \autoref{cond:BP}.} We apply~\autoref{cor:DWPD} to $(X^\ua_t)_{t\geq 0}$ with \(p = n^{-(1+\theta/\new{2})}\) and $\varepsilon$ small enough with respect to $\theta$, to conclude that
\begin{equation}\label{NSOK}
    \p{(\cB_{t_\omega}(\gamma^{\ua}))^c \cap E_{2}^\ua} 
    \le
    \p{(\cB_{t_\omega}(\gamma^{\ua}))^c \cap \{t_\omega<\infty\}} 
    = 
    O(n^{-(1+\theta/4)}),
\end{equation}
\new{where in the application of the corollary we have} absorbed the term $\omega^C$ inside the polynomial part, as we have chosen $\omega$ to be poly-logarithmic in~\eqref{eq:omega}.

\new{We apply~\autoref{prop:martingale} to $(X^\ua_t)_{t\geq 0}$ taking $\ell=h^-=O(\log{n})$ and $\delta=\log^{-2}{n}$, which satisfy $(1-2\delta M)^\ell\geq 1/2$ for sufficiently large $n$. By some standard computations and~\eqref{SDSO}, the hypotheses of the proposition are satisfied.} We obtain
\begin{equation}\label{HZRX}
    \begin{aligned}
        \p{E_1^{\ua}\cap  E_2^\ua}
        &
        \leq 
        \p{E_1^{\ua} \cap E_{2}^\ua \cap \cB_{t_{\omega}}(\gamma^{\ua})}
        +
        \p{(\cB_{t_\omega}(\gamma^{\ua}))^c\cap E_{2}}
        \\
        &
        \le
        \sum_{t_0=1}^{\omega} 
        \p{\hat\Gamma_{t_0+h^-}(t_0,\gamma^\ua) \mid \{t_{\omega }=t_0\} \cap \cB_{t_0}(\gamma^{\ua})}
        \p{t_{\omega }=t_0 \mid t_{\omega }\leq\omega}
        +
        O(n^{-(1+\theta/4)})
        \\
        &
        \leq  
        2\ell e^{\delta^2 \omega/4}
        +
        O(n^{-(1+\theta/4)})
        = 
        O(n^{-(1+\theta/4)})
        ,
    \end{aligned}
\end{equation}
\new{where we used that $\omega=\log^6 n$ in the last step.}

We now transfer the probability from branching processes to the directed configuration model using the coupling established in~\autoref{sec:coupling}.
\new{Recall that \((T^-_{f}(i))_{i\geq 0}\) is the sequence of marked trees rooted at $f$
constructed by the exploration process; see \autoref{sec:explore}.
Let $\cT$ be the set of marked
feasible trees $T$ of height at most \(\omega+h^{-}\) such that if $\{T^-_{f}(p(T)) \cong T\}$ holds, then $\vecGn$ satisfies $E^
f_1 \cap E^
f_2$, where \(p(T)\) was defined as the number of paired nodes in \(T\).}
Note that $p(T)
\leq \kappa^- = O(n^{\alpha})$ for any constant \(\alpha>0\), \new{provided that $\theta$ is small}; see~\eqref{TGWY}. Since we  chose $\alpha>0$ sufficiently small, by~\autoref{lem:eq_T_GW} with \(\beta = \alpha\), we have
\begin{align}\label{XRVP}
        \p{E^
f_1\cap E^
f_2}
        &
        = 
            \p{T_{f}^-\in \cT}
        \le
            (1+o(1))
            \p{\GW_{\eta^{\ua}}\in \cT}
\end{align}
Under $E_2^\ua$, $h=t_\omega+h^-$. Thus, $\hat{\Gamma}^{-}_h(f),t^-_\omega(f)$ are the graph analogues of $\hat{\Gamma}_{t_\omega+h^-},t_{\omega}$ and $\{\GW_{\eta^{\ua}}\in \cT\}=E_1^\ua\cap E_2^\ua$. \new{(Note that $E_1^\ua$ can also be written as $\{\hat{\Gamma}_{t_\omega+h^-}(t_\omega,\gamma)<\omega\gamma/2\}$, as the event is invariant with respect to the truncating constant.)}
It follows from~\eqref{HZRX} that
\begin{align}\label{XRVP2}
        \p{E^
f_1\cap E^
f_2}
        &
        =O(n^{-(1+\theta/4)})
        .
\end{align}
By~\autoref{lem:DSKD}, we have \(\p{(E_2^\ua)^c\mid t_\omega<\infty} \leq \p{0<X_{\omega}<\omega} = o(n^{-2})\). Applying
\autoref{lem:eq_T_GW} similarly as before yields \(\p{(E^
f_2)^c\mid
t_\omega^-(f)<\infty}=o(n^{-2})\). Therefore,  it follows from~\eqref{XRVP2} that
\begin{equation}\label{WQCC}
  \p{E^
f_1\cap \{t_\omega^-(f) < \infty\}}\leq \p{E^
f_1\cap E^
f_2}+\p{ (E^
f_2)^c\mid t_\omega^-(f) < \infty}= O(n^{-(1+\theta/4)})  
  .
\end{equation}
Thus, by applying the union bound first over \(f \in \cE^{-}\) and then
over \(e \in \cE^{-}\), we have
\begin{equation}\label{WLMU}
\begin{aligned}
    \p{\cX_2^c}
 &= \p{\bigcup_{e,f\in \cE^-} \Big(
    \left\{G_{e,f}< \frac{\omega \gamma}{4} \right\}\cap \{t_\omega^-(f) < \infty\}\Big)}  \\  
    &
    \leq \sum_{f\in \cE^-} \p{E_1^f\cap \{t_\omega^-(f) < \infty\}} + \sum_{e,f\in \cE^-}  \p{
    \left\{G_{e,f}< \frac{\omega \gamma}{4}\right\}\cap (E_1^f)^c} \\  
    &
    \leq
    o(1)
    +
    \sum _{e, f\in \cE^{-}}
    \p{
        \left\{G_{e,f}<\frac{\omega \gamma}{4} \right\}
        \cap
        \left\{ \hat{\Gamma }^{-}_h(f)  \geq \frac{\omega \gamma }{2} \right\}
    }
    .
    \end{aligned}
\end{equation}

We will control the terms in the previous sum by showing (2), that \(G_{e,f}\) is well-approximated by \(\hat{\Gamma}^{-}_{h}(f)\). Let $\sigma^-$ be a partial pairing of the at most \(\kappa^{-}=o(n)\) half-edges that were paired in the in-phase.
We perform the out-phase to construct the tree $T^+_{e}$ conditioned on $\sigma^-$.  Recall that during the out-phase at most $\kappa^{+}= o(n)$ edges are formed.  Thus, 
\begin{equation}\label{NFNZ}
    \begin{aligned}
        \E{G_{e,f} \mid \sigma^-}
    &
    = 
    \sum_{g\in \cN^-_{h}(f)} 
    \hat{\bfw}(g)
    \E{
        \ind{g \in \cG}
        \mid \sigma^-
    }
    \\
    &
    \ge
    \left( 1 - \frac{\kappa^{+}}{m - \kappa^{+} - \kappa^{-}} \right)
    \sum_{g\in \cN^-_{h}(f)} 
    \hat{\bfw}(g)
    \\
    &
    =
    (1+o(1))
    \hat{\Gamma}^{-}_{h}(f)
    .
    \end{aligned}
\end{equation}

An application of Azuma's inequality (see~\cite[pp.~92]{molloy2002a}) to $G_{e,f}$, which is determined by the random vector
$\left(\hatw(g) \ind{g \in \cG}\right)_{g\in \cN^-_h(f)}$, implies that
\begin{equation}\label{XNJR}
    \p{G_{e, f} <
    \frac{\hat{\Gamma}^{-}_{h}(f)}{2} \mid \sigma^-}
    \le
    \exp\left\{
        -2
        \frac{(1+o(1)\hat{\Gamma}^{-}_{h}(f)^{2}}{
                \sum_{g\in \cN^-_{h}(f)} 
                \hat{\bfw}(g)^{2}
            }
        \right\}
        \le
        \exp\left\{
            -\gamma^{-1} n^{3\theta} \hat{\Gamma}^{-}_{h}(f)
        \right\}
        ,
    \end{equation}
 where we used that by~\eqref{SLKD} and~\eqref{EKDJ},
 \begin{align}
 \sum_{g\in \cN^-_{h}(f)} \hat{\bfw}(g)^{2}\leq \hat{\Gamma}^{-}_{h}(f) \max_{g\in \cN^-_{h}(f)} \hat{\bfw}(g)\leq \gamma n^{-3\theta} \hat{\Gamma}^{-}_{h}(f). 
 \end{align}
Since the event \(\{\hat{\Gamma }^{-}_h(f)>\frac{\omega \gamma }{2}\}\) is measurable with respect to
\(\sigma^-\), we have
\begin{equation}\label{NLQP}
    \p{
        \left\{G_{e,f}<\frac{\omega \gamma}{4} \right\}
        \cap
        \left\{\hat{\Gamma }^{-}_h(f)>\frac{\omega \gamma }{2} \right\}
    }
    \le
    \exp\left\{
        -\gamma^{-1} n^{3 \theta} \left( \frac{\omega \gamma}{2} \right)
    \right\}
    \le
    \exp\left\{
        - \omega n^{3 \theta}
    \right\}
    .
\end{equation}
Putting this into~\eqref{WLMU} finishes the proof.
\end{proof}

\subsubsection{Proof of~\autoref{lem:PT}}\label{sec:LB_proof}

    Write 
\begin{equation}
E_1^{e,f}\coloneqq \left\{\hat{P}^{\tau(f)}(e,f)\geq  \frac{\gamma}{n}\right\}
,
\qquad
E^f_2\coloneqq \{t_\omega^-(f)=\infty\}
.
\end{equation}
Using~\eqref{eq:FKEN} and applying~\autoref{lem:concentration} with $c=1/2$,
\begin{equation}
\p{(E_1^{e,f})^c\mid \cX_{e,f}}
=
\p{\hat{P}^{\tau(f)}(e,f)<  \frac{\gamma}{n}\mid \cX_{e,f}}
\leq \max_{\sigma_0\in \cX_{e,f}}\p{\hat{P}^{\tau(f)}(e,f)<  \frac{\gamma}{n}\mid \sigma_0}
= o(n^{-2}).
\end{equation}
\new{Recall the definition of $\cX$ in \eqref{JIZC}.} It follows that,
\begin{align*}
\p{\cap_{e,f\in \cE^-} (E_1^{e,f}\cup E^f_2)} 
&\geq \p{\cX\cap (\cap_{e,f\in \cE^-} (E_1^{e,f}\cup E^f_2))}\\
&= \p{\cX} - \p{\cX\cap \left(\cap_{e,f\in \cE^-}(E_1^{e,f}\cup E^f_2)\right)^c}\\
&\geq (1-o(1)) - \sum_{e,f\in \cE^-} \p{\cX\cap (E_1^{e,f})^c \cap (E^f_2)^c}\\
&\geq (1-o(1)) - \sum_{e,f\in \cE^-} \p{(E_1^{e,f})^c\cap \cX_{e,f}}\\
&\geq (1-o(1)) - \sum_{e,f\in \cE^-} \p{(E_1^{e,f})^c\mid \cX_{e,f}}\\
&=1-o(1)
,
\end{align*}
where we used that $\cX\cap (E^f_2)^c$ implies $\cX_{e,f}$. 

So, whp, if $t^-_\omega(f)<\infty$, then $E_1^{e,f}$ holds. In other words, we have
shown that for all \(e \in \cE^{-}\) and \(f \in \cE_{0}^{-}\), 
\begin{equation}\label{FNLD}
    P^{\tau(f)}(e,f) \ge \hat{P}^{\tau(f)}(e,f)\ge\frac{\gamma}{n} \geq n^{-(1+(1+\theta)\hat{H}^-/\phi(a_{0}))}.
\end{equation}
Choosing \(\theta=\varepsilon \phi(a_0)/\hat{H}^{-}\), we conclude the proof of~\autoref{lem:PT}.

\subsection{Upper bound for \texorpdfstring{$\piminez$}{Pi max zero}}
\label{sec:upper}

Let $\theta>0$ be small enough. \new{Recall that $\hnu^->0$ and }let $a_{0}$ be the value that minimises $\phi(a)$, defined as in~\eqref{ODRS}. Recall the definitions \new{of $\nupm$ in~\eqref{SPEK}} and of $\omega$ in~\eqref{eq:omega}. Let 
\begin{equation}\label{JOAY}
    h_1\coloneqq \frac{1-\theta}{a_{0}\phi(a_{0})}\log{n}, 
    \qquad 
    h_2\coloneqq  \log_\nupm (2\omega) = O(\log\log n) . 
\end{equation}
In this section we set 
\begin{align}\label{SPEO}
\gamma\coloneqq n^{-{(1-\new{3\theta/2})\hat{H}^-}/{\phi(a_{0})}}.
\end{align}
\new{Fix $f\in \cE^-$ and let $T_f^-$ be the tree revealed by the exploration process described in~\autoref{sec:explore}, stopping
once all heads at distance \(h_1+h_2\) to \(f\) have become active.}
During the construction of $T^-_f$ we deterministically expose at most $\kappa^-_{1} \coloneqq \omega M h_1+\omega
\new{\sum_{i=1}^{h_2}M^{i+1}}= O(\log^C n)$ pairings for some constant $C>0$.

Recall the definition of $\mathbf{w}(g)$ in~\eqref{TQIN} \new{as the weight of the head $g$ in $T_f^-$}.
Define
\begin{align}
    P^-_t(f) &\coloneqq \sum_{g \in  \cN_{t}^-(f)} P^{t}(g, f),
\label{NRFS2} \\
\Gamma^{-}_{t}(f) &\coloneqq  \sum_{g\in \cN^-_{t}(f)} \bfw(g)
.\label{NRFS}
\end{align}
\new{Both parameters are related to the probability of reaching $f$ from $\cN_{t}^-(f)$ in $t$ steps.} However, while $P^-_t(f)$ takes into consideration all the contributions of moving to $f$, $\Gamma^-_t(f)$ only considers the contribution of moving to $f$ using edges in $T_f^-$. Therefore, $P^-_t(f)\geq \Gamma^-_t(f)$ with equality if $\cN_{\leq t}^-(f)$ induces a tree.

For any \new{connected} subgraph $G$ with $V(G)\subseteq [n]$, let $\TX(G) \coloneqq |E(G)|-(|V(G)|-1)$ denote the \emph{excess} of $G$, that is the number of additional edges in $G$ with respect to an spanning tree of it. 

Consider the following events (see~\autoref{fig:events} for an example):
\begin{equation}\label{EQRP}
    \begin{aligned}
        E^
f_0&=\{t^-_\omega(f) \le h_1+h_2\}
        ,
        &
        &
        E^
f_1=\{0<\Gamma^{-}_{h_1}(f)< \gamma\}
        ,
        \\
        E^
f_2&=\bigcap_{r=1}^{h_1} \{0<|\cN^-_r(f)|< \omega\}
        ,
        &
        &
        E^
f_3=\{\TX(\cN^-_{\leq h_1}(f))=0\} \text{ and}
        \\
        \cY_f&=E^
f_0\cap E^
f_1\cap E^
f_2\cap E^
f_3.
    \end{aligned}
\end{equation}

\begin{figure}[ht]
\centering
			\begin{tikzpicture}[scale=0.6]

			\node[circle,draw,fill=blue,inner sep=0pt,minimum size=8pt] (V0)	at (18,0) {};	
			\node[circle,draw,fill=blue,inner sep=0pt,minimum size=8pt] (V1)	at (15,0) {};	
			\node[circle,draw,fill=blue,inner sep=0pt,minimum size=8pt] (V2)	at (12,0) {};	
			\node[circle,draw,fill=blue,inner sep=0pt,minimum size=8pt] (V3)	at (9,0) {};	
			\node[circle,draw,fill=black,inner sep=0pt,minimum size=8pt] (V4)	at (6,0) {};	
			\node[circle,draw,fill=black,inner sep=0pt,minimum size=8pt] (V5)	at (3,0) {};	
			\node[circle,draw,fill=black,inner sep=0pt,minimum size=8pt] (V6)	at (0,0) {};	
			\node[circle,draw,fill=black,inner sep=0pt,minimum size=8pt] (V7)	at (-3,0) {};	
			
			\draw[-Latex,blue] (V1) --  (V0);
			\draw[-Latex,blue] (V2) --  (V1);
			\draw[-Latex,blue] (V3) --  (V2);
			\draw[-Latex] (V4) --  (V3);
			\draw[-Latex] (V5) --  (V4);
			\draw[-Latex] (V6) --  (V5);
			
			\node[circle,fill=blue,inner sep=0pt,minimum size=8pt] (V11) at (15,4) {};	
			\draw[-Latex,blue] plot [smooth, tension=1] coordinates { (15.2,4) (16.5,3)  (17.9,0.2)};;
			
			\node[circle,fill=blue, inner sep=0pt,minimum size=8pt] (V21) at (12,4) {};	
			\node[circle,draw,fill=blue, inner sep=0pt,minimum size=8pt] (V22) at (12,3) {};	
			\node[circle,fill=blue, inner sep=0pt,minimum size=8pt] (V23) at (12,-2) {};	
			
			\draw[-Latex,blue] (V21) --  (V11);
			\draw[-Latex,blue] plot [smooth, tension=1] coordinates { (12.2,3) (13.5,2.3)  (14.9,0.2)};;
			\draw[-Latex,blue] plot [smooth, tension=1] coordinates { (12.2,-2) (13.5,-1.5)  (14.9,-0.2)};;

			\node[circle,draw,fill=blue, inner sep=0pt,minimum size=8pt] (V31)	at (9,3) {};	
			\node[circle,draw,fill=blue, inner sep=0pt,minimum size=8pt] (V32)	at (9,-3) {};	
			
			\draw[-Latex,blue] (V31) --  (V22);
			\draw[-Latex,blue] plot [smooth, tension=1] coordinates { (9.2,-3) (10.5,-2.3)  (11.9,-0.2)};
			
			\node[circle,draw, very thick,fill=black, inner sep=0pt,minimum size=8pt] (V41)	at (6,3) {};
			\node[circle,fill=black, inner sep=0pt,minimum size=8pt] (V42)	at (6,1) {};		
			\node[circle,fill=black, inner sep=0pt,minimum size=8pt] (V43)	at (6,-2) {};		
			\node[circle,draw,fill=black, inner sep=0pt,minimum size=8pt] (V44)	at (6,-3) {};
			
			\draw[-Latex] (V41) --  (V31);
			\draw[-Latex] (V44) --  (V32);
			\draw[-Latex] plot [smooth, tension=1] coordinates { (6.2,1) (8,0.75)  (8.9,0.2)};
			\draw[-Latex] plot [smooth, tension=1] coordinates { (6.2,-2) (8,-2.25)  (8.9,-2.8)};

			\node[circle,fill=black, inner sep=0pt,minimum size=8pt] (V51)	at (3,4) {};		
			\node[circle,draw,fill=black, inner sep=0pt,minimum size=8pt] (V52)	at (3,3) {};	
			\node[circle,draw,fill=black, inner sep=0pt,minimum size=8pt] (V53)	at (3,2) {};	
			\node[circle,fill=black, inner sep=0pt,minimum size=8pt] (V54)	at (3,-1) {};		
			\node[circle,draw,fill=black, inner sep=0pt,minimum size=8pt] (V55)	at (3,-3) {};	
			
			\draw[-Latex] (V52) --  (V41);
			\draw[-Latex] (V55) --  (V44);
			\draw[-Latex] plot [smooth, tension=1] coordinates { (3.2,-1) (5,-0.75)  (5.9,-0.2)};
			\draw[-Latex] plot [smooth, tension=1] coordinates { (3.2,2) (5,2.25)  (5.9,2.8)};
			\draw[-Latex] plot [smooth, tension=1] coordinates { (3.2,4) (5,3.75)  (5.9,3.2)};
			\draw[-Latex] plot [smooth, tension=1] coordinates { (0.2,-0.1) (1,-0.75)  (2.8,-1)};

			\node[circle,draw,fill=black, inner sep=0pt,minimum size=8pt] (V6e)	at (0,5) {};
			\node[circle,draw,fill=black, inner sep=0pt,minimum size=8pt] (V60)	at (0,4) {};	
			\node[circle,draw,fill=black, inner sep=0pt,minimum size=8pt] (V61)	at (0,3) {};	
			\node[circle,draw,fill=black, inner sep=0pt,minimum size=8pt] (V62)	at (0,2) {};	
			\node[circle,draw,fill=black, inner sep=0pt,minimum size=8pt] (V63)	at (0,1) {};	
			\node[circle,draw,fill=black, inner sep=0pt,minimum size=8pt] (V65)	at (0,-3) {};	
			\node[circle,draw,fill=black, inner sep=0pt,minimum size=8pt] (V66)	at (0,-4) {};				
			
			\draw[-Latex] (V60) --  (V51);
			\draw[-Latex] (V61) --  (V52);
			\draw[-Latex] (V62) --  (V63);
			\draw[-Latex] (V62) --  (V53);
			\draw[-Latex] (V61) --  (V52);
			\draw[-Latex] (V65) --  (V55);
			
			\draw[-Latex] plot [smooth, tension=1] coordinates { (0.2,-4) (2,-3.75)  (2.9,-3.2)};
			\draw[-Latex] plot [smooth, tension=1] coordinates { (0.2,1) (2,.75)  (2.9,0.2)};

			\node[circle,draw,fill=black, inner sep=0pt,minimum size=8pt] (V70)	at (-3,5) {};	
			\node[circle,draw,fill=black, inner sep=0pt,minimum size=8pt] (V71)	at (-3,4) {};	
			\node[circle,draw,fill=black, inner sep=0pt,minimum size=8pt] (V72)	at (-3,3) {};	
			\node[circle,draw,fill=black, inner sep=0pt,minimum size=8pt] (V73)	at (-3,2) {};	
			\node[circle,draw,fill=black, inner sep=0pt,minimum size=8pt] (V74)	at (-3,-1) {};	
			\node[circle,draw,fill=black, inner sep=0pt,minimum size=8pt] (V75)	at (-3,-3) {};	
			\node[circle,draw,fill=black, inner sep=0pt,minimum size=8pt] (V76)	at (-3,-4) {};				\node[circle,draw,fill=black, inner sep=0pt,minimum size=8pt] (V77)	at (-3,-5) {};

			\draw[-Latex] (V71) --  (V60);
			\draw[-Latex] (V72) --  (V61);
			\draw[-Latex] (V73) --  (V62);
			\draw[-Latex] (V7) --  (V6);
			\draw[-Latex] (V75) --  (V65);
			\draw[-Latex] (V76) --  (V66);
			
			\draw[-Latex] plot [smooth, tension=1] coordinates { (-2.8,-5) (-1,-4.75)  (-0.1,-4.2)};
			\draw[-Latex] plot [smooth, tension=1] coordinates { (-2.8,5) (-1,4.75)  (-0.1,4.2)};
			\draw[-Latex] plot [smooth, tension=1] coordinates { (0.2,5) (2,4.75)  (2.9,4.2)};
			\draw[-Latex] plot [smooth, tension=1] coordinates { (-2.8,-1) (-1,-.75)  (-0.1,-0.2)};

			\node at (18.7,0) {$f$};
			\node at (9,5.7) {$h_1$};
			\node at (-3,5.7) {$h_1+h_2$};

			\draw[dashed] (-4,4.5) -- (19,4.5);	
			\draw[dashed] (-4,-3.5) -- (19,-3.5);

			\draw[-Latex,blue] (V1) -- ++(0.9,0.3);
			\draw[-Latex,blue] (V1) -- ++(0.7,0.7);
			\draw[-Latex,blue] (V1) -- ++(0.7,-0.7);
			
			\draw[-Latex,blue] (V11) -- ++(1,0);
			\draw[-Latex,blue] (V11) -- ++(0.9,0.3);
			
			\draw[-Latex,blue] (V2) -- ++(0.9,0.3);
			\draw[-Latex,blue] (V2) -- ++(0.9,-0.3);
			\draw[-Latex,blue] (V2) -- ++(0.7,0.7);
			\draw[-Latex,blue] (V2) -- ++(0.7,-0.7);

			\draw[-Latex,blue] (V21) -- ++(0.9,0.3);	
			
			\draw[-Latex,blue] (V22) -- ++(1,0);
			\draw[-Latex,blue] (V22) -- ++(0.9,0.3);
			\draw[-Latex,blue] (V22) -- ++(0.7,0.7);
			\draw[-Latex,blue] (V22) -- ++(0.7,-0.7);
						
			\draw[-Latex,blue] (V23) -- ++(0.9,0.3);
			\draw[-Latex,blue] (V23) -- ++(0.9,-0.3);
			\draw[-Latex,blue] (V23) -- ++(0.7,0.7);
			\draw[-Latex,blue] (V23) -- ++(0.7,-0.7);

			\draw[-Latex,blue] (V3) -- ++(0.9,0.3);
			\draw[-Latex,blue] (V3) -- ++(0.7,0.7);
			\draw[-Latex,blue] (V3) -- ++(0.7,-0.7);
			\draw[-Latex,blue] (V3) -- ++(0.9,-0.3);
			\draw[-Latex,blue] (V3) -- ++(0.3,0.9);

			\draw[-Latex,blue] (V31) -- ++(0.9,0.3);
			\draw[-Latex,blue] (V31) -- ++(0.7,0.7);
			\draw[-Latex,blue] (V31) -- ++(0.9,-0.3);

			\draw[-Latex,blue] (V32) -- ++(0.8,0.5);
			\draw[-Latex,blue] (V32) -- ++(0.9,-0.3);

			\end{tikzpicture}

\caption{Schematic drawing of the event $\cY_f$ for $h_1=3$, $h_2=4$ and $\omega=7$. In it, $\cN^-_{\leq h_1+h_2}(f)$ is depicted \new{and  $\cN^-_{\leq h_1}(f)$ is coloured in blue; we only add the out-degrees in the latter as the other ones are irrelevant for $\cY_f$}. Note that $t_{\omega}^-(f)= 6$. Events $E^
f_0$ and $E^
f_2$ hold as $h_1<t_{\omega}^-(f)\leq h_1+h_2$. \new{Using the out-degrees of the vertices in $\cN^-_{\leq h_1}(f)$, one can compute the weight of each of the three heads at distance $h_1$ of $f$ to obtain $\Gamma_{h_1}^-(f)= \tfrac{1}{4\cdot 5 \cdot 4} + \tfrac{1}{6\cdot 5 \cdot 4} + \tfrac{1}{3\cdot 5 \cdot 4} = \tfrac{3}{80}$. 
So event $E^f_1$ will hold provided that $\gamma>\tfrac{3}{80}$}. Event $E^f_3$ holds as $\cN^-_{\leq h_1}(f)$ induces a tree.}
\label{fig:events}
\end{figure}

Next proposition estimates the probability that a head $f$ satisfies the desired properties.
\begin{prop}\label{prop:SKFD}
    Uniformly for all $f\in \cE^-$, we have
        \begin{equation}\label{RHEN}
    n^{-1+\theta/2}\leq \p{\cY_f}\leq  n^{-1+2\theta}
    .
    \end{equation}
\end{prop}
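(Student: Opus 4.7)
The plan is to encode the event $\cA(f)$ in terms of the marked incomplete tree $T_f^-$ produced by the exploration of Section~\ref{sec:explore}, transfer it via the coupling of \autoref{lem:eq_T_GW} to a marked Galton--Watson tree with offspring $\eta^{\da/\ua} = \eta^{\da/\ua}(\beta)$ close to $\dout$, and then apply the branching-process bounds \autoref{thm:LB} and \autoref{thm:UB} with the parameters $t = h_1$ and $a = a_0$. The choice of $h_1 = (1-\varepsilon)\log n / (a_0 \phi(a_0))$ and $\gamma = n^{-(1-\varepsilon) \hat{H}^-/\phi(a_0)}$ is precisely calibrated so that $e^{-a_0 \hat{H}^- h_1} = \gamma$ and $a_0 \phi(a_0) h_1 = (1-\varepsilon)\log n$, yielding the target exponent $n^{-(1-\varepsilon)+o(1)} = n^{-1+\varepsilon+o(1)}$. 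All perturbation parameters satisfy $\hat\nu^{\ua/\da} = (1+o(1))\hat\nu^-$, $\hat H^{\ua/\da} = (1+o(1))\hat H^-$, and $\phi^{\ua/\da}(a_0^{\ua/\da}) = (1+o(1))\phi(a_0)$ by the same continuity arguments used in the proof of \autoref{lem:A2}, so the rates transfer up to $o(1)$ errors in the exponent.

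\textbf{Upper bound.} Observe that $\cA(f) \subseteq F(f) \cap E_1(f)$, which is measurable from $T_f^-$ once grown to height $h_1$. Let $\cT'$ be the set of incomplete trees of height $h_1$ with widths in $[1, \omega)$ at every level and total weight at the leaves in $(0,\gamma)$; all such $T$ have $p(T) = O(\omega h_1) \leq n^{\beta/10}$, so \autoref{lem:eq_T_GW} applies to give $\p{T_f^- \in \cT'} \leq (1+o(1))\p{\GW_{\eta^\ua} \in \cT'}$. In the branching process, having $0 < \Gamma_{h_1} < \gamma$ forces at least one descendant weight to fall below $\gamma$, so the event in $\cT'$ is contained in $G_{h_1}(\gamma)^c \cap [0 < X_{h_1} < \omega]$. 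Choosing $\omega = \log^6 n$ (so that $\log^2 \omega \ll h_1 \ll \omega^{1/2}$ for large $n$) and applying \autoref{thm:UB} gives the desired $n^{-(1-\varepsilon)+o(1)}$ bound.

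\textbf{Lower bound.} Apply \autoref{lem:eq_T_GW} in the reverse direction to get $\p{T_f^- \in \cT} \geq (1+o(1))\p{\GW_{\eta^\da} \in \cT}$, where $\cT$ now also encodes $E_0(f)$ by additionally requiring that the tree reaches width $\omega$ by generation $h_1 + h_2$. By \autoref{thm:LB} applied to $\GW_{\eta^\da}$ with $t = h_1$ and $a = a_0$, the event $[0 < \Gamma_{h_1} < \gamma] \cap \cap_{r \leq h_1}[0 < X_r < \omega]$ has probability at least $n^{-(1-\varepsilon) + o(1)}$. Inspecting the proof of \autoref{thm:LB}, this event comes with a single surviving individual at generation $h_1$ whose descendants form an independent supercritical Galton--Watson tree with mean $\nu > 1$; by a Kesten--Stigum growth estimate (as in~\eqref{RWPB}), this subtree reaches size $\omega$ within $h_2 = \log_\nu \omega$ further generations with positive constant probability. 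Transferring back to the graph via \autoref{lem:eq_T_GW} gives the lower bound for $\p{E_0(f) \cap E_1(f) \cap F(f)}$. Finally, $E_2(f)$ is automatic on the branching side; on the graph side, one removes at most $\text{polylog}(n)$ half-edges during the exploration, so the probability of any collision is $O(\text{polylog}(n)/n) = o(1)$, and $E_2(f)$ holds with probability $1 - o(1)$ conditional on the other three events, preserving the lower bound.

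\textbf{Main obstacle.} The delicate step is the lower bound: \autoref{thm:LB} is proved by conditioning on the spine collapsing to a single surviving lineage with $X_{h_1} = 1$ (or $= k_0$), so the process is exponentially rare and very small at time $h_1$. Extending beyond this without compounding the cost requires identifying the surviving subtree after the spine event and re-applying a supercritical growth estimate uniformly in the conditioning, which is why choosing $h_2 = \log_\nu \omega$ with $\omega$ large enough (but still smaller than any $n^{o(1)}$) is needed. A secondary subtlety is ensuring that the definitions $\hat{H}^{\ua/\da}$, $\hat{\nu}^{\ua/\da}$, $I^{\ua/\da}$ and $a_0^{\ua/\da}$ all match their unperturbed counterparts up to $1+o(1)$ factors uniformly for small enough $\beta$, so the $o(1)$ error in the exponent after \autoref{lem:eq_T_GW} is genuinely negligible; this is routine given that the perturbation modifies masses by at most $n^{-1+\beta}$ and $\phi$ is continuous at $a_0$.
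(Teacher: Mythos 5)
Your proposal matches the paper's own argument essentially line for line: lower bound via \autoref{thm:LB} followed by a Kesten--Stigum growth estimate to get a surviving subtree to width $\omega$ within $h_2$ generations (giving $E_0$), upper bound via \autoref{thm:UB} after observing $[0<\Gamma_{h_1}<\gamma]\subseteq G_{h_1}(\gamma)^c$, transfer to the graph through \autoref{lem:eq_T_GW} with the perturbed distributions $\eta^{\ua/\da}$, and a $\mathrm{polylog}(n)/n$ bound for the collision event $E_2(f)$. The one point worth stating carefully is that your claim ``$E_2(f)$ holds with probability $1-o(1)$ conditional on the other three events'' secretly uses $\varepsilon>0$: it follows because $\p{(E_2(f))^c\cap F(f)}=n^{-1+o(1)}$ while $\p{E_0(f)\cap E_1(f)\cap F(f)}\geq n^{-1+\varepsilon+o(1)}$, so the ratio is $n^{-\varepsilon+o(1)}=o(1)$; the paper avoids this conditional formulation by simply subtracting $\p{(E_2(f))^c\cap F(f)}$, which is equivalent but makes the role of $\varepsilon$ explicit.
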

\begin{proof}

Let $(X_r)_{r\geq 0}$ be a marked branching
process with distribution $\eta= \dout$.
Consider the events
\begin{equation}\label{VNSW}
E_0 =\{t_{\omega} \leq  h_{1}+h_{2}\},\; E_1=\{0<\Gamma_{h_1}< \gamma\} \text{ and } E_2= \bigcap_{r=1}^{h_1} \{0<X_r<\omega\},
\end{equation}
\new{where $t_{\omega}$ and $\Gamma_{h_1}$ are as in \eqref{SOEP} and~\eqref{OSKE} respectively. These events can be seen as the analogues of $E_0^f$, $E_1^f$ and $E_2^f$ in the branching process setting.}

Choose $\delta>0$ sufficiently small with respect to $\theta$ \new{so $2\delta\leq \theta_0 a_0 \hat{H}^-$ and $\delta< \theta a_0\phi(a_0)$}. By \autoref{thm:LB} with
$a=a_{0}$ and $t=h_1$, \new{there exists $c>0$ such that}
\begin{equation}\label{XXEI}
    \p{E_1\cap E_2} \geq \new{c} e^{-(a_0\phi(a_0)+\delta)	h_1} \geq  n^{-1+3\theta/4}.
\end{equation}
When \(E_2\) happens, there is at least one individual in generation \(h_{1}\). Thus, there exists a constant \(c_0>0\) only depending on $M$ such that
\begin{equation}\label{JDPA}
    \p{E_{0} \mid E_{1} \cap E_2}
    \geq
    \p{X_{h_{2} + h_{1}} \geq \omega  \mid X_{h_{1}} \new{= 1}}\new{=} \p{X_{h_2\new{-1}}\geq \omega}>c_0,
\end{equation}
where we used~\eqref{NRIW} and $h_2= \log_\nupm (2\omega)$. 
Therefore,
\(\p{E_{0} \cap E_{1} \cap E_2} \ge c_0 n^{-1+3\theta/4}\).

Let $\cT$
be the set of feasible marked trees $T$ of height at most $h_1+h_2$ such that if $\{T^-_{f}(p(T))\cong T\}$ then $\vecGn$ satisfies $E^
f_0\cap E^
f_1\cap E^
f_2$.
%
 \new{Recall that $p(T)\leq \kappa^-_1= O(\log^C n)$ for any $T\in \cT$.
Using~\autoref{lem:eq_T_GW} with $\beta>0$ arbitrarily small,
we have}
\begin{equation}
    \p{E^
f_0 \cap E^
f_1\cap E^
f_2}
    \geq 
    (1+o(1))
    \p{\GW_{\eta} \in \cT} + \new{O(n^{2\beta-3/2})}
    \geq n^{-1+2\theta/3}
    .
\end{equation}


We next bound the probability of \( E^
f_2\cap (E^
f_3)^{c} \) from above. Observe that $\TX(\cN_{\leq
h_1}^-(f))$ is the number of collisions in the exploration process of $T^-_f$, that is, steps where $e_i^{+}\in
\cA^+_{i-1}$; see~\autoref{sec:explore}. At step $i$, the probability of a collision is at most $\frac{M i}{m-i+1}$. So,
provided $\ell\leq m/2$, the number of collisions in the first $\ell$ steps is dominated by a
binomial random variable with $\ell$ independent trials of probability $2\ell M/m$. Note that the probability of
\(E^
f_2\cap (E^
f_3)^{c}\) is at most the probability of at least one collision
happening when running the exploration process for at most $\ell=(\omega-1)h_1=O(\log^7
n)$ steps. Thus we have
\begin{align}
\p{E^
f_2\cap (E^
f_3)^{c}}=O(\ell^2/m)= n^{-1+o(1)}.
\end{align}
Thus, we obtain the desired lower bound:
\begin{align*}
    \p{\cY_f} &= \p{E^
f_0 \cap E^
f_1\cap E^
f_2\cap E^
f_3} \\ 
    &\geq \p{E^
f_0 \cap E^
f_1\cap E^
f_2} -\p{E^
f_2\cap (E^
f_3)^{c}}
\geq  n^{-1+\theta/2}.
\end{align*}	
For the second part of the proposition, the upper bound, we consider a marked branching process $(X^\ua_t)_{t\geq 0}$ with distribution $\eta^\ua$, \new{and the events $E_1$ and $E_2$ defined as in~\eqref{VNSW}. Let $\gamma^{\ua}\coloneqq n^{-(1-2\theta)\hat{H}^{\ua}/\phi^{\ua}(a_0^{\ua})}$. By \autoref{rem:cont_parameters}, the parameters $\hat{H}^{\ua},\phi^{\ua},a_0^{\ua}$ are well-approximated by $\hat{H},\phi,a_0$, so $\gamma^{\ua}\geq \gamma$ provided that $n$ is sufficiently large. Also, the remark implies that $\eta^\ua$ satisfies \autoref{cond:BP}.}
An application of~\autoref{thm:UB} with $\delta$ sufficiently small with respect to $\theta$, $a=a^\ua_{0}$, and $t=h_1$, and of~\autoref{lem:eq_T_GW} \new{with $\beta>0$ arbitrarily small}, gives
that 
\begin{equation}\label{XFDZ}
    \begin{aligned}
    \p{\cY_f}
    &
    \le
    \p{E^
f_1\cap E^
f_2}
    \\
    &\leq  
    (1+o(1))
    \p{E_1\cap E_2}
  	\\
    &\le
        \p{(\cB_{t}(\gamma))^{c} \cap \{0 < X_{t} < \omega\}}
        \\    &\new{\le
        \p{(\cB_{t}(\gamma^{\ua}))^{c} \cap \{0 < X_{t} < \omega\}}}
        \\
    &\le
    n^{-1+2\theta}
    .
    \end{aligned}
\end{equation}
    \qedhere
\end{proof}

We will show that $Z\coloneqq \sum_{f\in \cE^-} \mathbf{1}_{\cY_f}$ satisfies $\{Z>0\}$ whp, using a second moment calculation. As $m\geq n$,~\autoref{prop:SKFD} implies that $\E{Z}\geq n^{\theta/2}$. 
For a partial pairing $\sigma_0$ and following the notation introduced in~\autoref{sec:explore}, we write $f\in \cP^-(\sigma_0)$ if $f$ has been paired by $\sigma_0$. 
For $f_1,f_2\in \cE^-$ with $f_1\neq f_2$, we have
\begin{align}\label{BFGH}
\p{\cY_{f_1}\cap \cY_{f_2}}
&\leq 
\sum_{\sigma_0\in \cY_{f_1}}\p{\sigma_0}\left(\mathbf{1}_{f_2\in \cP^-(\sigma_0)}+\mathbf{1}_{f_2\notin \cP^-(\sigma_0)}\p{\cY_{f_2}\mid \sigma_0}\right)\\
&\leq \p{\cY_{f_1}}\left(O\left(\frac{\kappa_1^-}{n}\right)+ \max_{\substack{\sigma_0\in \cY_{f_1} \\ f_2\notin \cP^-(\sigma_0)}}\p{\cY_{f_2}\mid \sigma_0}\right).
\end{align}
We briefly describe how to compute $\p{\cY_{f_2}\mid \sigma_0}$ for $f_2\notin \cP^-(\sigma_0)$. Start the exploration
process in~\autoref{sec:explore} from \(f_{2}\) with one modification: if \(e^{-}_{i}\) has already
been paired in \(\sigma_{0}\), then we choose \(e^{+}_{i}\) according to \(\sigma_{0}\) instead of
uniformly at random. This modification \new{is similar to the one detailed in \autoref{sec:out-neigh}} and is needed since the original process in \autoref{sec:explore} only allows for $\cP^\pm_0=\emptyset$, but here we start with a non-empty partial pairing $\sigma_0$. If a head has already been paired, we lose control of the event. So we need to control the probability of this happening.
Since at most $\kappa_{1}^-$ half-edges are paired in $\sigma_0$, the probability of the event \(\{\cN_{\le
h_{1}+h_{2}}(f_{2}) \cap \cP^-(\sigma_{0}) \ne \emptyset\}\) is at most $O({(\kappa_{1}^{-})^{2}}/{n})$. 
This implies that
\begin{equation}\label{QVFU}
    \p{\cY(f_{2}) \mid \sigma_{0}} \le \p{\cY(f_{2})} +
    O\left(\frac{(\kappa_{1}^{-})^{2}}{n}\right)
        .
\end{equation}
By~\autoref{prop:SKFD}, we obtain
\begin{align}\label{BFGH2}
\p{\cY(f_1)\cap \cY(f_2)} &
\leq \p{\cY(f_1)}\left(O\left(\frac{(\kappa_{1}^{-})^{2}}{n}\right)+\p{\cY(f_2)}\right)
= O(n^{-2+4\theta}).
\end{align} 
So $\E{Z^2}=(1+o(1))\E{Z}^2$, and Chebyshev's inequality implies that whp $Z>0$.

Conditioned on $Z>0$, let $f_{0}$ be such that $\cY(f_{0})$ holds.  As $E_0^{f_{0}}$ holds, we have
$f_{0}\in \cE^-_{0}$; see~\eqref{NVEU2}. Moreover, $E_1^{f_{0}}\cap E_3^{f_{0}}$ implies that $ \sum_{g\in
\cE^-}P^{h_1}(g,f_{0})= P^-_{h_1}(f_{0})=\Gamma^{-}_{h_1}(f_{0})< \gamma$, where we used $E^{f_0}_3$ twice: once in the first equality to argue that the only way to reach $f$ in $h_1$ steps is to start at $g\in \cN_{h_1}^-(f)$ and once in the second equality. 
Moreover,~\autoref{rem:SDKA} implies that for every $\varepsilon'>0$, whp $\pi^\bfe(g)\leq \new{\max\{\pi^\bfe(e):e\in \cE^-\}\leq \pi_{\max}\leq}
n^{-1+\varepsilon'}$ for any $g\in \cE^-$. By stationarity at time $h_1$, the minimum stationary value of the random walk on heads satisfies
\begin{align*}
\pi^\bfe_{0}\leq \pi^\bfe(f_{0}) &= \sum_{g\in \cE^-}P^{h_1}(g,f_{0})\pi^\bfe(g) 
\leq n^{-(1+(1-3\theta/2)\hat{H}^-/\phi(a_{0}))+\varepsilon'}.
\end{align*}
Choosing $\theta=\varepsilon \phi(a_0)/3\hat{H}^{-},\varepsilon'=\varepsilon/2$, by~\autoref{prop:strongly} and~\autoref{lem:piminez} the upper bound in~\autoref{thm:main} follows.

\subsection{The proof of~\autoref{rem:empirical}}
\label{sec:empirical}

To show the upper bound in~\eqref{EABU}, we can follow the line of arguments in
\autoref{sec:lower}. Here we briefly sketch the changes needed for it to work. 

Let $\varepsilon>0$ be a sufficiently small constant. For \(\alpha \in [0, {\hat{H}^-}/{\phi \left(a_0\right)}]\),
let \(\gamma_{\alpha} \coloneqq n^{-\alpha}\) and \(\beta=\frac{\alpha
\phi \left(a_0\right)}{\hat{H}^-} \le 1\). \new{Let $\omega$ as in~\eqref{eq:omega}. Let $(X_r)_{r\geq 0}$ be a marked branching
process with distribution $\eta= \dout$.}
By
\autoref{cor:DWPD} with \(p = n^{-\frac{\beta}{(1+\varepsilon)}}\), which satisfies the hypotheses, we have
\begin{equation}\label{SVMQ}
    \p{(\cB_{t_\omega}(\gamma_{\alpha}))^c \cap \{t_\omega<\infty\}} 
    \le
    \new{\omega^{O(1)}}n^{-\big(\frac{1-\varepsilon}{1+\varepsilon}\big)\beta}
    \le
    n^{-(1-2\varepsilon)\beta}
    .
\end{equation}

\new{Let $\sigma_0$ be a partial realisation of the directed configuration model revealed during the out- and in-phases.}
Let 
\begin{equation}\label{JVEU}
    \cX_{f}^\alpha \coloneqq \bigcap_{e \in \cE^{-}} \left(\left\{A_{e,f} \ge \frac{1}{2}  \right\}
    \cap \left\{G_{e,f} \ge \frac{\omega \gamma_{\alpha}}{4} \right\} \right)
    ,
\end{equation}
\new{where $A_{e,f},G_{e,f}$ are defined as in~\eqref{QHKO}.}

Then, by the same argument used in~\autoref{lem:whp} with~\eqref{SVMQ}, we have
\begin{equation}\label{USOG}
    \p{(\cX_{f}^\alpha)^{c} \cap \{t^-_\omega(f) < \infty\}}
    \le
    n^{-(1-3\varepsilon)\beta}
    .
\end{equation}
Let \(\cZ_f \coloneqq \left\{0 < \pi^\bfe(f) < \frac{\gamma_{\alpha}}{2n}\right\}\).
It follows that
\begin{equation}\label{GFND}
    \begin{aligned}
        \p{
            \cZ_f
        }
        &
        =
        (1+o(1))
        \p{
            \left\{
                \pi^\bfe(f)  < \frac{\gamma_{\alpha} }{2n}
            \right\} 
            \cap 
            \left\{ t_{\omega}^{-}(f) < \infty \right\}
        }
        \\
        &
        \le
        (1+o(1))
        \left( 
        \p{
            \left\{
               \pi^\bfe(f)  < \frac{\gamma_{\alpha} }{2n}
            \right\} 
            \cap 
            \cX^{\alpha}_f
        }
        +
        \p{(\cX^{\alpha}_f)^{c} \cap \{t^-_\omega(f) < \infty\}}
        \right)
        \\
        &
        =
        (1+o(1))
        \p{
            \cup_{e \in \cE^{-}} \left\{\hat{P}^{\tau(f)}(e,f) < \frac{\gamma_{\alpha} }{2n} \right\}
            \cap 
            \cX^{\alpha}_f
        }
        +
        n^{-(1-3\varepsilon)\beta}
        \\
        &
        =
        n^{-(1-4\varepsilon)\beta}
        ,
    \end{aligned}
\end{equation}
where in the first equality we used~\autoref{prop:strongly}, \new{in the second equality we used the relation between $\pi^\bfe(f)$ and $\hat{P}^{\tau(f)}(e,f)$ that appeared in~\eqref{EODJ}} and in the last equality we used~\autoref{lem:concentration}.

Recall the definition of
\(\cY_f\) in~\eqref{EQRP}. Define \(\cY^{\alpha}_f\) analogously with \(h_{1}\) replaced by \(h_{1,\alpha} = \beta h_{1}\) and \(\gamma\) replaced by \(\gamma_{\alpha}\). The
argument displayed in~\autoref{prop:SKFD} gives
\begin{equation}\label{UNME}
    \p{
       \cZ_f
    }
    \ge
    (1+o(1))
    \p{\cY^{\alpha}_f} 
    \ge 
    n^{-\beta-\varepsilon}.
\end{equation}
\new{As the choice of $\varepsilon$ was arbitrary,~\eqref{EABU} follows from~\eqref{GFND} and~\eqref{UNME}, and so does~\autoref{rem:empirical}.} Concentration of $\psi((0, n^{-\alpha}])$ can be shown by the same second moment argument used in \autoref{prop:SKFD}.

\section{Applications}\label{sec:app}

\subsection{Hitting and cover times, and the proof of \autoref{thm:hitcov}}

We now prove~\autoref{thm:hitcov}, i.e., whp the hitting and the cover time of \(\vecGn\) are both
\(n^{1+\hat{H}^-/\phi(a_{0})+o(1)}\). Clearly, $\thit \leq \tcov$, so it suffices to lower
bound $\thit$ and upper bound $\tcov$.

For the lower bound, let \(C < (1+\hat{H}^{-}/\phi(a_{0}))\) be a constant. Then by
\autoref{thm:main}, we have \(\pimin \le n^{-C}\) whp.
Recall the definition of \(\tau_{v}\) in the introduction and
define \new{$\tau_{v}^+\coloneqq \inf\{t\geq 1: Z_t=v\}$}.
By the well-known relation between the expected time of the first return to the origin and the stationary distribution, we have whp
\begin{equation}\label{MYHQ}
    \max_{u \in [n]} 
    \E{\tau_{u}^+ \mid \vecGn, \new{Z_0=u}} 
    = 
    \max_{u \in [n]} 
    \frac{1}{\pi(u)} 
    =
    \frac{1}{\pimin}
    \ge 
    n^{C}
    .
\end{equation}
Thus, whp there exists a vertex \(u_{0} \in [n]\) such that \(\E{\tau_{u_{0}}^+\mid \vecGn, \new{Z_0=u_0}} \ge
n^{C}\), which implies that
\begin{equation}\label{GHMJ}
	n^{C}
	\leq 
    \E{\tau_{u_{0}}^+ \mid \vecGn, \new{Z_0=u_0}}
    =
    1 + 
    \frac{1}{d_{u_{0}}^{+}}
    \sum_{v \in \cN^{+}_{\leq 1}(u_{0})} m(u_0,v)
    \E{\tau_{u_{0}} \mid \vecGn, \new{Z_0=v}}
    ,
\end{equation}
where $m(u,v)$ is the multiplicity of the directed edge $(u,v)$ in $\vecGn$.
Thus, whp there exists two vertices \(u_{0}\) and \(v_{0}\) such that
\begin{equation}\label{LJPS}
    \E{\new{\tau_{u_{0}}} \mid \vecGn, \new{Z_0=v_{0}}}
    \ge
    n^{C} -1
    .
\end{equation}
It follows that \(\thit \ge n^{C}-1\) whp, as desired.

For the upper bound, let \(C > (1+\hat{H}^{-}/\phi(a_{0}))\) be a fixed constant and let \(t =
\omega^{2} = \log^{12} n\). Recall the definition of $\cV_0$ in~\autoref{prop:strongly}.
\autoref{lem:PT} implies that whp for all \(u \in [n]\) and \(v \in \cV_0\), there exists
$t_v\leq t$ (by~\eqref{IOWT}) such that  \( P^{t_v}(u,v) \ge n^{-C}\). So the
probability to hit \(v\) in at most \(t\) steps, which we call a \emph{try}, is at least \(n^{-C}\)
uniformly for any starting point $u$. Thus, the number of tries needed to hit \(v\) starting at $u$ is
stochastically dominated by a geometric random variable with success
probability \(n^{-C}\). It follows that  whp
\begin{equation}\label{VEDI}
    \thit= \max_{\substack{u\in [n] \\ v\in \cV_0}} \E{\tau_{v} \new{ \mid Z_0=u}}
 \le n^{C} t
    .
\end{equation}
Therefore, by Matthews' bound~\cite[Theorem~2.6]{matthews1988}, we have
\begin{equation}\label{MIKZ}
    \tcov \le H_{n} \thit
	=    
    n^{C+o(1)}
    ,
\end{equation}
where \(H_{n}\) is the \(n\)-th harmonic number.

\subsection{Explicit constants for particular degree sequences}\label{sec:examples}

In this section we discuss two particular examples where the polynomial exponent can be made explicit.

\subsubsection{\texorpdfstring{\(r\)}{r}-out digraph}

For any fixed integer $r\geq 2$, an \(r\)-out digraph \(\mathbb{D}_{n,r}\) is a random directed graph with \(n\) vertices in
which each vertex chooses \(r\) out-neighbours uniformly at random. It is used as a model for studying
uniformly random Deterministic Finite Automata~\cite{cai2017b}.  For \(r \ge 2\), Addario-Berry,
Balle and the second author~\cite{addario-berry2020} showed that in \(\mathbb{D}_{n,r}\), for every $\varepsilon>0$ and whp,
\begin{equation}\label{JFJT}
   n^{-(1 + \log(r)/(s r - \log r))-\varepsilon }\leq  \pimin \leq  n^{-(1 + \log(r)/(s r - \log r))+\varepsilon }
    ,
\end{equation}
where \(s\) is the largest solution of \(1- s = e^{-s r}\).

Note that \(D^{-}=D^-_n\) (the in-degree of a uniform random vertex in \(\vecGn\)) converges to a Poisson distribution with mean \(r\), whereas
the \(D^{+} =D_n^+ = r \geq 2\). Although in \(\mathbb{D}_{n, r}\) the in-degrees are random, in this context, by~\cite[Lemma 9.2]{cai2020a}, events that hold whp in \(\vecGn\) also hold
whp in \(\mathbb{D}_{n,r}\).

One can check that whp the maximum in-degree
of \(\mathbb{D}_{n,r}\) has order \(\frac{\log{n}}{\log\log n}\). A careful inspection of the proof of
\autoref{thm:main} shows that the bounded maximum degree condition can be relaxed to $\Delta^\pm =
o(\log n)$, provided that there are not many large degree vertices. Therefore, the conclusion of~\autoref{thm:main} holds in this setting. As $\p{D^+=r}=1$,
we have
$I(a\hat{H}^-)=\infty$ for any $a\neq 1$, so $a_0=1$ and for every $\varepsilon>0$ and whp 
\begin{align}\label{EONW}
n^{-(1+\abs{\log \hnu^-})- \varepsilon} \leq \pimin\leq n^{-(1+\abs{\log \hnu^-})+ \varepsilon},
\end{align}
which coincides with~\eqref{JFJT}.

\subsubsection{A toy example}

Here we show how the explicit value for $\pimin$ can be computed for a simple distribution, providing an example where $a_0\neq 1$. For $m\in \mathbb{N}$ and $n=4m$, consider a degree distribution $\vbfd_n$ that contains $m$ vertices of degrees $(0,2), (0,3), (5,2)$ and $ (5,3)$.
As $D^+$ is a uniform random variable supported on $\{2,3\}$ and is independent from $D^-$, we have $\log\dtildep= \log 2 +
\log(3/2)X$, where $X$ is a Bernoulli random variable with probability $p=3/5$.  The large deviation
rate function for a Bernoulli random variable with probability $p$ is $I_{\Be}(z)=z
\log\left(\frac{z}{p}\right)+(1-z) \log\left(\frac{1-z}{1-p}\right)$ for $z\in [0,1]$ (see, e.g.,~\cite[Exercise 2.2.23]{dembo2010}). Thus, we have $I(z)=I_{\Be}\left((\log(3/2))^{-1} (z-\log 2)\right)$.  

With the help of interval arithmetic libraries~\cite{sanders2020}, we get
\begin{equation}\label{QUEW}
    \hat{H}^{-} \doteq 0.936426
    \,,
    \hnu^{-} \doteq 0.181095
    \,,
    a_{0} \doteq 1.06671
    \,,
    \phi(a_{0}) \doteq 1.65129
    \,,
    1+\frac{\hat{H}^-}{\phi (a_{0})} \doteq 1.56708
    \,,
\end{equation}
with errors guaranteed to be at most \(10^{-6}\) by the algorithm.
As shown in~\autoref{fig:phi}, the function \(\phi(a)\) attains minimum at \(a_{0} > 1\).   

\begin{figure}[h]
    \centering
    \includegraphics[width=0.6\textwidth]{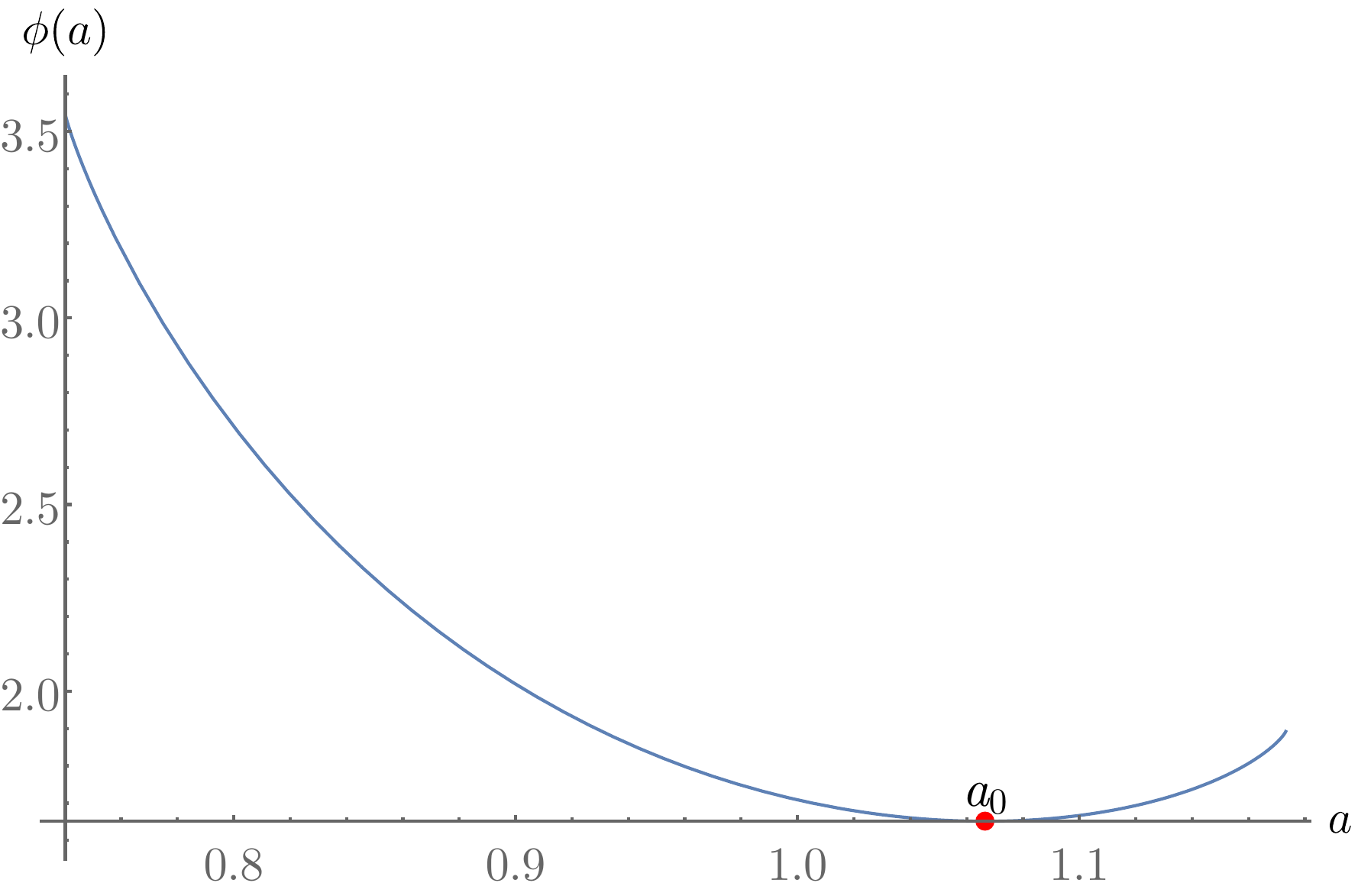}
    \caption{Plot of the function \(\phi(a)\)}\label{fig:phi}
\end{figure}

Recall the discussion on the critical distance in~\autoref{rem:crit}. For the degree sequence $\vbfd_n$ presented in this section and $n$ large enough, we obtain
\begin{align}\label{SDIE}
\frac{\text{crit}(\vecGn(\vbfd_n))}{\diam(\vecGn(\vbfd_n))} \doteq  0.989552.
\end{align}
Heuristically, this can be understood as follows: the distance from the bulk of the graph to the vertex attaining $\pimin$ is $98.95\%$ of the distance from the bulk to the furthest vertex to it. In particular, the vertex that is hardest to reach is not the
vertex that is furthest away from the others.


\paragraph{Acknowledgements.} We would like to thanks Pietro Caputo and Matteo Quattropani for insightful discussions on the topic. 
We are also grateful to the three anonymous reviewers, whose insightful comments have greatly contributed to improving this paper.


\begin{thebibliography}{29}
\providecommand{\natexlab}[1]{#1}
\providecommand{\url}[1]{\texttt{#1}}
\expandafter\ifx\csname urlstyle\endcsname\relax
  \providecommand{\doi}[1]{doi: #1}\else
  \providecommand{\doi}{doi: \begingroup \urlstyle{rm}\Url}\fi

\bibitem[{Addario-Berry} et~al.(2020){Addario-Berry}, Balle, and
  Perarnau]{addario-berry2020}
L.~{Addario-Berry}, B.~Balle, and G.~Perarnau.
\newblock Diameter and stationary distribution of random $r$-out
  digraphs.
\newblock \emph{The Electronic Journal of Combinatorics}, P3.28, 2020.
\newblock \doi{10/ghd74q}.

\bibitem[Amini(2010)]{amini2010}
H.~Amini.
\newblock Bootstrap {{Percolation}} in {{Living Neural Networks}}.
\newblock \emph{J Stat Phys}, 141\penalty0 (3):\penalty0 459--475, 2010.
\newblock \doi{10/c53hx4}.

\bibitem[Athreya and Ney(1972)]{athreya1972}
K.~B. Athreya and P.~E. Ney.
\newblock \emph{Branching {{Processes}}}.
\newblock Grundlehren Der Mathematischen {{Wissenschaften}}. {Springer-Verlag},
  {Berlin Heidelberg}, 1972.
\newblock \doi{10/dft4}.

\bibitem[Blanchet and Stauffer(2013)]{blanchet2013}
J.~Blanchet and A.~Stauffer.
\newblock Characterizing optimal sampling of binary contingency tables via the
  configuration model.
\newblock \emph{Random Structures \& Algorithms}, 42\penalty0 (2):\penalty0
  159--184, 2013.
\newblock \doi{10/f4mtxh}.

\bibitem[Bordenave et~al.(2018)Bordenave, Caputo, and Salez]{bordenave2018}
C.~Bordenave, P.~Caputo, and J.~Salez.
\newblock Random walk on sparse random digraphs.
\newblock \emph{Probab. Theory Relat. Fields}, 170\penalty0 (3):\penalty0
  933--960, 2018.
\newblock \doi{10/gc8nxk}.

\bibitem[Bordenave et~al.(2019)Bordenave, Caputo, and Salez]{bordenave2019}
C.~Bordenave, P.~Caputo, and J.~Salez.
\newblock Cutoff at the ``entropic time'' for sparse {{Markov}} chains.
\newblock \emph{Probab. Theory Relat. Fields}, 173\penalty0 (1):\penalty0
  261--292, 2019.
\newblock \doi{10/ghcrhr}.

\bibitem[Cai and Devroye(2017)]{cai2017b}
X.~S. Cai and L.~Devroye.
\newblock The graph structure of a deterministic automaton chosen at random.
\newblock \emph{Random Structures \& Algorithms}, 51\penalty0 (3):\penalty0
  428--458, 2017.
\newblock \doi{10/gbtqgb}.

\bibitem[Cai and Perarnau(2020{\natexlab{a}})]{cai2020}
X.~S. Cai and G.~Perarnau.
\newblock The giant component of the directed configuration model revisited.
\newblock \emph{ALEA, Lat. Am. J. Probab. Math. Stat.}, 18\penalty0 :\penalty0 1517--1528, 2021.
\newblock \doi{10/gk49g2}.

\bibitem[Cai and Perarnau(2020{\natexlab{b}})]{cai2020a}
X.~S. Cai and G.~Perarnau.
\newblock The diameter of the directed configuration model.
\newblock \emph{Ann. Inst. H. Poincaré Probab. Statist.}, 59\penalty0 (1):\penalty0 244--270, 2023.
\newblock \doi{10/jxhj}.

\bibitem[Cai et~al.(2021)Cai, Caputo, Perarnau, and Quattropani]{cai2021a}
X.~S. Cai, P.~Caputo, G.~Perarnau, and M.~Quattropani.
\newblock Rankings in directed configuration models with heavy tailed
  in-degrees.
\newblock \emph{Annals of Applied Probability}, 33\penalty0 :\penalty0 5613--5667, 2023.
\newblock \doi{10/nj7t}.

\bibitem[Caputo and Quattropani(2020)]{caputo2020a}
P.~Caputo and M.~Quattropani.
\newblock Stationary distribution and cover time of sparse directed
  configuration models.
\newblock \emph{Probab. Theory Relat. Fields}, 178\penalty0 :\penalty0 1011--1066, 2020.
\newblock \doi{10/ghd74v}.

\bibitem[Chatterjee(2007)]{chatterjee2007}
S.~Chatterjee.
\newblock Stein's method for concentration inequalities.
\newblock \emph{Probab. Theory Relat. Fields}, 138\penalty0 (1-2):\penalty0
  305--321, 2007.
\newblock \doi{10/fm2x4r}.

\bibitem[Chen and Olvera-Cravioto(2016)]{chen2016}
N.~Chen and M.~Olvera-Cravioto.
\newblock Coupling on weighted branching trees.
\newblock \emph{Advances in Applied Probability}, 48\penalty0 (2):\penalty0
  499--524, 2016.
\newblock \doi{10/f86bdg}.

\bibitem[Chen et~al.(2017)Chen, Litvak, and Olvera-Cravioto]{chen2017}
N.~Chen, N.~Litvak, and M.~Olvera-Cravioto.
\newblock Generalized {{PageRank}} on directed configuration networks.
\newblock \emph{Random Structures \& Algorithms}, 51\penalty0 (2):\penalty0
  237--274, 2017.
\newblock \doi{10/gbrth6}.

\bibitem[Cooper and Frieze(2004)]{cooper2004}
C.~Cooper and A.~Frieze.
\newblock The {{size}} of the {{largest strongly connected component}} of a
  {{random digraph}} with a {{given degree sequence}}.
\newblock \emph{Combinatorics, Probability and Computing}, 13\penalty0
  (3):\penalty0 319--337, 2004.
\newblock \doi{10/cn8q5j}.

\bibitem[Cooper and Frieze(2012)]{cooper2012}
C.~Cooper and A.~Frieze.
\newblock Stationary distribution and cover time of random walks on random
  digraphs.
\newblock \emph{J. Comb. Theory Ser. B}, 102\penalty0 (2):\penalty0 329--362, 2012.
\newblock \doi{10/cv9wbh}.

\bibitem[Dembo and Zeitouni(2010)]{dembo2010}
A.~Dembo and O.~Zeitouni.
\newblock \emph{Large {{Deviations Techniques}} and {{Applications}}}.
\newblock Stochastic {{Modelling}} and {{Applied Probability}}.
  {Springer-Verlag}, {Berlin Heidelberg}, second edition, 2010.
\newblock \doi{10/bcszkm}.

\bibitem[From(2007)]{from2007}
S.~G. From.
\newblock Some New Bounds on the Probability of Extinction of a Galton–Watson Process with Numerical Comparisons.
\newblock \emph{Communications in Statistics—Theory and Methods}, 36\penalty0
  (10):\penalty0 1993--2009, 2007.
  \newblock \doi{10/bhhgxn}

\bibitem[Graf(2016)]{graf2016}
A.~Graf.
\newblock \emph{On the Strongly Connected Components of Random Directed Graphs
  with given Degree Sequences}.
\newblock Master thesis, University of Waterloo, 2016.
\newblock \url{http://hdl.handle.net/10012/10681}.


\bibitem[{van der Hofstad}(2016)]{vanderhofstad2016}
R.~van der {Hofstad}.
\newblock \emph{Random {{Graphs}} and {{Complex Networks}}}, volume~1 of
  \emph{Cambridge Series in Statistical and Probabilistic Mathematics}.
\newblock {Cambridge University Press}, {Cambridge, England}, 2016.
\newblock \doi{10/ggv8q7}.

\bibitem[van~der Hoorn and {Olvera-Cravioto}(2018)]{hoorn2018}
P.~van~der Hoorn and M.~{Olvera-Cravioto}.
\newblock Typical distances in the directed configuration model.
\newblock \emph{Ann. Appl. Probab.}, 28\penalty0 (3):\penalty0 1739--1792,
  2018.
\newblock \doi{10/ggh2ch}.

\bibitem[Janson(2009)]{janson2009}
S.~Janson.
\newblock The probability that a random multigraph is simple.
\newblock \emph{Combinatorics, Probability and Computing}, 18\penalty0
  (1-2):\penalty0 205--225, 2009.
\newblock \doi{10/bg4m2c}.

\bibitem[Li(2018)]{li2018}
H.~Li.
\newblock Attack {{Vulnerability}} of {{Online Social Networks}}.
\newblock In \emph{2018 37th {{Chinese Control Conference}} ({{CCC}})}, pages
  1051--1056, 2018.
\newblock \doi{10/ggh2kg}.

\bibitem[Matthews(1988)]{matthews1988}
P.~Matthews.
\newblock Covering {{Problems}} for {{Brownian Motion}} on {{Spheres}}.
\newblock \emph{Ann. Probab.}, 16\penalty0 (1):\penalty0 189--199, 1988.
\newblock \doi{10/c8q2r8}.

\bibitem[Molloy and Reed(2002)]{molloy2002a}
M.~Molloy and B.~Reed.
\newblock \emph{Graph {{Colouring}} and the {{Probabilistic Method}}}.
\newblock Algorithms and {{Combinatorics}}. {Springer-Verlag}, {Berlin
  Heidelberg}, 2002.
\newblock \doi{10/hgcj}.

\bibitem[Petrov(1975)]{petrov1975}
V.~Petrov.
\newblock \emph{Sums of {{Independent Random Variables}}}.
\newblock Ergebnisse Der {{Mathematik}} Und Ihrer {{Grenzgebiete}}. 2.
  {{Folge}}. {Springer-Verlag}, {Berlin Heidelberg}, 1975.
\newblock \doi{10/hgck}.

\bibitem[Riordan and Wormald(2010)]{riordan2010}
O.~Riordan and N.~Wormald.
\newblock The diameter of sparse random graphs.
\newblock \emph{Combin. Probab. Comput.}, 19\penalty0 (5-6):\penalty0 835--926, 2010.
\newblock \doi{10/dgp6hh}.

\bibitem[Rosler(1993)]{rosler1993}
U.~R\"osler.
\newblock The weighted branching process.
\newblock \emph{Dynamics of complex and irregular systems (Bielefeld, 1991)}, Bielefeld Encounters in Mathematics and Physics VIII, World Science Publishing, River Edge, NJ, 1 154--165, 1993.
\newblock \doi{10/nj7s}.

\bibitem[Sanders et~al.(2020)Sanders, Benet, Agarwal, Gupta, Richard, Forets, Hanson, {van Dyk}, Rackauckas, {Miclua-C{\^a}mpeanu}, Koolen,
  Wormell, V{\'a}zquez, Grawitter, TagBot, O'Bryant, Carlsson, Piibeleht,
  {Reno}, Deits, Olver and Holy]{sanders2020}
D.~P. Sanders, L.~Benet, K.~Agarwal, E.~Gupta, B.~Richard, M.~Forets, E.~Hanson, B.~{van Dyk}, C.~Rackauckas,
  S.~{Miclua-C{\^a}mpeanu}, T.~Koolen, C.~Wormell, F.~A. V{\'a}zquez,
  J.~Grawitter, J.~TagBot, K.~O'Bryant, K.~Carlsson, M.~Piibeleht, {Reno},
  R.~Deits, S.~Olver and T.~Holy.
\newblock {{JuliaIntervals}}/{{IntervalArithmetic}}.jl: V0.17.5.
\newblock Zenodo, 2020.
\newblock URL \url{https://github.com/JuliaIntervals/IntervalArithmetic.jl}.


\end{thebibliography}
\end{document}